\newtheorem{theorem}{Theorem}[section]
\newtheorem{proposition}{Proposition}[section]
\newtheorem*{corollary*}{Corollary}
\newtheorem*{remark*}{Remark}
\newtheorem{lemma}{Lemma}[section]
\newtheorem{definition}{Definition}[section]
\begin{document}

\title{Randomized Systematic Scan dynamics on the Mean-field Ising Model}
\author{Sanghak Jeon}
\address{Mathematics Department, Princeton University, Princeton, NJ 08544-1000, USA}
\thanks{{\it E-mail addresses: } \href{mailto:sanghakj@princeton.edu}{\nolinkurl{sanghakj@princeton.edu}}}.

\begin{abstract}
    We study the mixing time of systematic scan Glauber dynamics Ising model on the complete graph. On the complete graph $K_n$, at each time, $k \leq n$ vertices are chosen uniformly random and are updated one by one according to the uniformly randomly chosen permutations over the $k$ vertices. We show that if $k = o(n^{1/3})$, the high temperature regime $\beta < 1$ exhibits cutoff phenomena. For critical temperature regime $\beta = 1$, We prove that the mixing time is of order $n^{3/2}k^{-1}$. For $\beta > 1$, we prove the mixing time is of order $nk^{-1}\log n$ under the restricted dynamics.
\end{abstract}

\maketitle

{\footnotesize
    \noindent {\em 2023 Mathematics Subject Classification.} Primary: 60J10; Secondary: 60C05, 60G42.
    
    \noindent {\em Keywords and phrases.} Markov chain, mixing time, Ising model, systematic scan, cutoff phenomenon.
}

\section{Introduction}
    The Ising model has risen in Statistical Physics to explain the ferromagnetism, and its mixing time has been extensively studied, yet there are only few cutoff results(\cite{DLY09}, \cite{DLP09}, \cite{LLP10}, \cite{LS12}, \cite{LS14}). In this paper we propose the Ising model under systematic scan, suggest the mixing time order for all regimes, and prove the existence of the total variation cutoff phenomenon for a high temperature regime.

    One of the common ways to evolve the Ising model is the Glauber dynamics: choosing a vertex $v$ uniformly randomly from a given graph and update its spin according to the spins of the vertices connected to $v$. From a different point of view, rather than choosing a vertex uniformly randomly, we can consider a model with which is updated in a deterministic way. This is so-called systematic scan, and a folklore belief on several problems is that models with a random update dynamics and systematic scan share a similar have similar metastability. To be more precise, for the Ising model on a finite graph $G = (V, \mathcal{E})$, fix $k \leq |V|$ and uniformly randomly choose $k$ vertices among $V$, uniformly randomly choose a permutation over those $k$ vertices and update one by one along the permutation.
    
    The question that comes along with a given Markov chain is how fast it converges to its stationary distribution. Total variation distance on Markov chain measures the difference between the worst probability distribution and the stationary measure:
    \begin{equation*}
        d(t) := \max_{x \in X} \left\| P^t_x - \pi \right\|_{TV} = \frac{1}{2}\max_{x \in X}\sum_{y \in X}|P^t(x, y) - \pi(y)|,
    \end{equation*}
    and the mixing time associated with the distance
    \begin{equation*}
         t_\textrm{mix}(\epsilon) := \min \{t : d(t) \leq \epsilon\}
    \end{equation*}
    quantifies the convergence rate. Often $t_\textrm{mix}(1/4)$ is written as $t_\textrm{mix}$. In addition, with a sequence of graphs $G_n$ we can investigate their asymptotic behavior around the mixing time. For a sequence of Markov chains $\{X_n\}$ with the distance $d_n(t)$, we say the chain exhibits \textit{cutoff} at $\{t_n\}$ with window size $\{w_n\}$ if $w_n = o(t_n)$ and
    \begin{equation*}
        \begin{split}
            \lim_{\gamma \to \infty} \liminf_{n\to\infty} d_n(t_n - \gamma w_n) &= 1, \\
            \lim_{\gamma \to \infty} \limsup_{n\to\infty} d_n(t_n + \gamma w_n) &= 0.
        \end{split}
    \end{equation*}
    There is an equivalent statement for cutoff phenomena, but it does not provide any information about the window size. See \cite{LP10} chapter 18 for more details on cutoff phenomena.
    
    To this end we can pose a problem on systematic scan Ising model: \textit{Do Ising models with Glauber dynamics and with systematic scan dynamics have the same order of the mixing time, and cutoff phenomena?} One of the main theorems of this paper is that the complete graph Ising model equipped with systematic scan dynamics has a total variation cutoff in the high temperature regime, $\beta < 1$.
    
    \begin{theorem} \label{thm: main1}
        Suppose $\beta < 1$. If $k = o(n^{1/3})$, then the model exhibits cutoff at $t_n = [2k(1-\beta)]^{-1}n \log n$ with window size $w_n = n/k$.
    \end{theorem}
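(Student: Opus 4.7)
The plan is to adapt the magnetization-contraction strategy of Levin--Luczak--Peres for single-site Glauber on $K_n$ at high temperature, handling the extra correlations introduced by updating $k$ vertices per scan step. Write $S_t = \sum_v \sigma_v(t)$ for the magnetization; since the mean-field Hamiltonian depends only on $S_t$, each single-site update is a function of $S_t - \sigma_v(t)$ alone, so magnetization dynamics remain tractable.

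First I would compute the one-step drift and variance of $S_t$. A single Glauber update contributes $\mathbb{E}[\Delta S \mid S] = -(1-\beta) S/n + O(|S|^3/n^3)$ via a Taylor expansion of $\tanh(\beta \cdot /n)$. For a scan that updates $k$ sites sequentially in a uniformly random order, I would expand the $j$-th update's conditional mean around the pre-scan magnetization and sum to obtain
\begin{equation*}
    \mathbb{E}[S_{t+1} - S_t \mid S_t] \;=\; -\frac{k(1-\beta)}{n}\, S_t \;+\; O\!\left(\frac{k^2 |S_t|}{n^2}\right) \;+\; O\!\left(\frac{k |S_t|^3}{n^3}\right),
\end{equation*}
together with $\mathrm{Var}(S_{t+1}\mid S_t) = O(k)$. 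The $k^2/n^2$ term encodes the feedback of earlier intra-scan updates on the local field seen by later ones. Iterating yields $\mathbb{E}[S_t^2] \leq (1 - k(1-\beta)/n)^{2t}\, S_0^2 + C n$.

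For the upper bound, I would set up a monotone grand coupling of two chains $X_t, Y_t$ driven by shared vertex selections, permutations, and uniform thresholds. The drift identity applied to $S(X_t) - S(Y_t)$ shows that at time $t_n + \gamma w_n$ the two magnetizations agree with probability tending to one as $\gamma \to \infty$. Conditional on $S(X_t) = S(Y_t)$, an additional $O(n/k)$ scan steps suffice to couple the full configurations via a standard coupling argument on the disagreement count, exploiting that once $|S_t| = O(\sqrt n)$ the induced dynamics on disagreements has contractive drift. Telescoping the two phases gives $t_{\mathrm{mix}}(\epsilon) \leq t_n + C_\epsilon w_n$.

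For the lower bound the magnetization is itself the distinguishing statistic. Starting from the all-$+1$ configuration, the drift identity yields $\mathbb{E}[S_t] \sim (1 - k(1-\beta)/n)^t\, n$, which at $t = t_n - \gamma w_n$ is of order $\sqrt n \, e^{(1-\beta)\gamma}$, while under $\pi$ one has $\mathbb{E}_\pi[S] = 0$ and $\mathrm{Var}_\pi(S) = O(n)$. A Chebyshev comparison on $\{S \geq \tfrac12 \sqrt n\, e^{(1-\beta)\gamma}\}$ then forces $d_n(t_n - \gamma w_n) \to 1$.

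The principal obstacle is uniform control of the intra-scan dependence and of the second moment of $S_t$ across the full time horizon $n \log n / k$. The hypothesis $k = o(n^{1/3})$ is precisely what makes the cumulative correction $(k^2/n^2)\,\mathbb{E}|S_t|$ negligible against the leading $k(1-\beta)/n \cdot S_t$ at every scale of $|S_t|$ that arises, and what ensures that the variance contributed by a single scan step, summed over the mixing time, stays inside the claimed window $w_n = n/k$ rather than shifting the cutoff location. Making these estimates uniform in time and aligning the post-magnetization coupling cost with $n/k$ will be the most delicate step.
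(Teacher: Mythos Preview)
Your lower bound sketch and the magnetization-matching portion of your upper bound are broadly in line with the paper's approach. The genuine gap is in the second phase of the upper bound: you assert that once $S(X_t)=S(Y_t)$, an additional $O(n/k)$ scan steps suffice to couple the full configurations ``via a standard coupling argument on the disagreement count, exploiting \ldots\ contractive drift.'' But contractive drift on the Hamming distance only gives geometric decay at rate $1-ck/n$, so bringing the disagreement count from its post-magnetization value of order $\sqrt n$ down below $1$ costs another $\Theta\bigl((n/k)\log n\bigr)$ steps --- the same order as $t_n$ itself, not the window $w_n=n/k$. This would destroy the cutoff statement. The paper does not use a contraction argument here at all: it passes to a \emph{two-coordinate chain} $(U_t,V_t)$ indexed by a reference configuration $\sigma_0$, proves (Lemma~3.5) that its total-variation distance coincides with that of the full chain, shows $|U_{\tau_{\mathrm{mag}}}-\widetilde U_{\tau_{\mathrm{mag}}}|=O(\sqrt n)$ via partial-sum estimates (Lemma~2.5, Lemma~3.8), and then runs a \emph{diffusive} supermartingale argument (Lemma~2.3 applied through Lemma~3.6) on $R_t=\widetilde U_t-U_t$, whose per-step variance is uniformly bounded below on a good event. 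It is this variance-driven hitting-time bound, not contraction, that delivers $O(n/k)$.

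Relatedly, you misattribute the role of $k=o(n^{1/3})$. The magnetization drift and contraction (your displayed identity, the paper's Propositions~2.1--2.2 and Lemma~2.7) require only $k=o(\sqrt n)$; the stronger $k=o(n^{1/3})$ hypothesis enters exclusively in the final exact-matching step of the two-coordinate phase (Lemma~3.7), where one must control the probability that $k$ intermediate single-site updates avoid collisions over a time horizon of order $\sqrt{nk}$. Your proposal does not touch this mechanism, so the explanation you give for the $n^{1/3}$ threshold does not match where the constraint actually bites.
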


    For the complete graph Ising model, $\beta = 1$ is known for not exhibiting a cutoff. Hence we only propose both the upper and lower bound of the mixing time under systematic scan:

    \begin{theorem} \label{thm: main2}
        Suppose $\beta = 1$. If $k = o(n^{1/4})$, then there exists constants $c_1, c_2 > 0$ such that $c_1 n^{3/2}k^{-1} \leq t_\textrm{mix}(1/4) \leq c_2 n^{3/2}k^{-1}$.
    \end{theorem}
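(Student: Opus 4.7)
The plan is to reduce both bounds to estimates on the magnetization chain $S_{t}=\sum_{v}\sigma_{v}(t)$, modeled on the Levin--Luczak--Peres (LLP) analysis at $\beta=1$ and rescaled to absorb the $k$ sequential heat-bath updates per scan. Set $m_{t}=S_{t}/n$ and write one scan as $k$ consecutive single-site updates, with magnetization $S^{(j)}$ recorded just before the $j$-th update. The key input for both directions is a one-scan drift/variance identity: expanding $\tanh(\beta S^{(j)}/n)$ around $S_{t}/n$, averaging over the random vertex set and permutation, and tracking the intra-scan cross-interactions (updates past the first see the already-updated magnetization) yields, at $\beta=1$,
\begin{equation*}
    \mathbf{E}\bigl[S_{t+1}-S_{t}\mid\mathcal{F}_{t}\bigr]=-\tfrac{k}{3n^{3}}S_{t}^{3}+O\!\bigl(\tfrac{k|S_{t}|^{5}}{n^{5}}\bigr)+O\!\bigl(\tfrac{k^{2}|S_{t}|^{3}}{n^{4}}\bigr),\qquad \mathrm{Var}\bigl(S_{t+1}-S_{t}\mid\mathcal{F}_{t}\bigr)=k\bigl(1-m_{t}^{2}+o(1)\bigr).
\end{equation*}
Thus one scan mimics $k$ vanilla Glauber steps, with the intra-scan correction a factor $O(k/n)$ below the main drift.

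For the upper bound I adapt the LLP strategy. The quadratic Lyapunov estimate $\mathbf{E}[S_{t+1}^{2}\mid\mathcal{F}_{t}]\leq S_{t}^{2}-ckS_{t}^{4}/n^{3}+Ck$ shows that from any starting state the magnetization reaches the equilibrium scale $|S|=O(n^{3/4})$ within $O(n^{3/2}/k)$ scans. I then couple two copies of the chain with shared vertex sets and coupled heat-bath coins; conditionally on the entire magnetization trajectory the individual spins evolve as a product chain with contraction rate of order $k/n$ in Hamming distance, so the coupled copies agree after an additional $O((n/k)\log n)$ scans. This extra term is absorbed into $O(n^{3/2}/k)$ since $\log n=o(\sqrt{n})$, completing the upper bound.

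For the lower bound I start from the all-plus configuration and compare $m_{t}$ with the ODE $\dot m=-\tfrac{k}{3n}m^{3}$, whose solution from $m_{0}=1$ satisfies $m^{\mathrm{det}}_{t}=\sqrt{3n/(2kt)}\,(1+o(1))$ for $t\gg n/k$. A standard martingale concentration applied to $S_{t}-\sum_{s<t}\mathbf{E}[S_{s+1}-S_{s}\mid\mathcal{F}_{s}]$, using the variance bound above, gives $|S_{t}-n\,m^{\mathrm{det}}_{t}|=O(\sqrt{kt})$ with high probability. Choosing $t_{\star}=c_{1}n^{3/2}/k$ with $c_{1}$ sufficiently small yields $S_{t_{\star}}\geq \tfrac{1}{2}\sqrt{3/(2c_{1})}\,n^{3/4}>0$ with probability at least $7/8$; since $\pi(S>0)\to\tfrac{1}{2}$ by symmetry, the event $\{S>0\}$ distinguishes the distribution at time $t_{\star}$ from $\pi$, forcing $d(t_{\star})\geq 1/4$.

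The main obstacle is the bookkeeping of the intra-scan cross-interactions in the drift identity: two updates within one scan are correlated because the second already observes the new spin produced by the first, and telescoping the $\tanh$ differences along the random permutation is what accounts for the $O(k^{2}|S|^{3}/n^{4})$ term above. These corrections must remain negligible against the main drift and variance throughout the coupling and concentration arguments, and controlling them uniformly at the equilibrium scale $|S|\lesssim n^{3/4}$ is where the hypothesis $k=o(n^{1/4})$ becomes necessary. Once the one-scan identity is established with explicit error control, the remaining steps are standard adaptations of the $\beta=1$ Glauber analysis.
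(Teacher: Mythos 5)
Your upper bound has a genuine gap: you never make the magnetizations of the two copies agree, and at $\beta=1$ that is the heart of the argument and is what consumes the $n^{3/2}k^{-1}$ budget. Your Lyapunov step only brings each copy separately to the scale $|S|=O(n^{3/4})$; it does not couple $S_t$ with $\widetilde{S}_t$. Your second step then asserts that under the shared-vertex, shared-coin coupling the Hamming distance contracts at rate $k/n$, but this is false at criticality: the contraction factor of the grand coupling (Proposition \ref{prop: hamming distance contraction}) is $\rho=\frac{1}{\beta}\big[1+(\beta-1)(1+\beta/n)^k\big]$, which at $\beta=1$ is $1-O(k/n^2)$, not $1-\Theta(k/n)$; moreover, as long as $S_t\neq\widetilde{S}_t$ the shared uniforms keep regenerating disagreements at each single-site update at rate proportional to the magnetization gap, so with a typical gap of order $n^{3/4}$ (unnormalized) the number of disagreements equilibrates near $n^{3/4}$ instead of hitting zero. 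The rate-$k/n$ coalescence you invoke is only available \emph{after} the magnetizations exactly coincide, via the rematching coupling that pairs the $(+,-)$ with the $(-,+)$ disagreement sites (Lemma \ref{lem: magnetization match to exact match}). Producing that coincidence requires a separate diffusive argument: run the two chains and use the per-scan variance $\Theta(k/n^2)$ of the difference together with the supermartingale hitting estimate (Lemma \ref{lem: supermartingale lemma}) to close a gap of order $n^{-1/4}$ (normalized) in $O(n^{3/2}/k)$ scans, and then handle the fact that one scan moves the magnetization by up to $2k/n$, so the two magnetizations can jump over each other without meeting — this is what Lemma \ref{lem: crossing is almost coupling} (intermediate states) is for, and where $k=o(\sqrt{n})$ enters. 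None of this appears in your proposal, so the upper bound as written does not go through.

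Two smaller points. First, your claimed intra-scan error $O(k^2|S|^3/n^4)$ in the one-scan drift is more optimistic than a direct estimate justifies; the safe bound (Lemma \ref{lem: apriori estimate}) is $O(k^2/n)$ unnormalized, since intermediate magnetizations deviate by up to $2j/n$ irrespective of $|S|$ — but that cruder bound already loses to the main drift $k|S|^3/n^3$ at the scale $|S|\asymp n^{3/4}$ exactly when $k=o(n^{1/4})$, so this is fixable (note that if your finer form were correct, no constraint on $k$ would arise from it, contradicting your own remark about where the hypothesis is needed). Second, your lower bound is a different route from the paper's (ODE decay from the all-plus state versus the paper's second-moment displacement bound started from $s_0=2An^{-1/4}$ inside the critical window, using the $n^{1/4}S$ limit law); it is plausible, but the claim $|S_t-n\,m^{\mathrm{det}}_t|=O(\sqrt{kt})$ is not a bare martingale bound, since the accumulated drift depends on the random trajectory — you need a comparison or Gronwall-type step (e.g. controlling the growth of $S_t^{-2}$) before concentration can be applied.
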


    For the last regime, $\beta > 1$, the mixing time is known to be exponential in $n$. \cite{LLP10} suggested a restricted version of Glauber dynamics and they showed the mixing time is $O(n\log n)$. The restricted dynamics can be also applied to systematic scan case, and we prove the mixing time order is $nk^{-1}\log n$ in this model.
    
    \begin{theorem} \label{thm: main3}
        Suppose $\beta > 1$. If $k = o(n^{1/2})$, then there exists constants $c_1, c_2 > 0$ such that $c_1 nk^{-1} \log n \leq t_\textrm{mix}(1/4) \leq c_2 nk^{-1}\log n$, where $t_\textrm{mix}(1/4)$ is the mixing time under the restricted dynamics.
    \end{theorem}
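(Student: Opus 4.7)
The plan is to reduce everything to the magnetization chain $M_t = \sum_{v} \sigma_t(v)$, using the standard mean-field observation that on $K_n$ the single-site update probability depends on the configuration only through the current magnetization. For the restricted dynamics of \cite{LLP10} one has contraction in the positive well: if $s^{\ast}$ is the positive solution of $s = \tanh(\beta s)$ and $\alpha := 1 - \beta(1-(s^{\ast})^{2}) > 0$, a single-site update contracts the expected magnetization gap between two monotonically coupled copies by a factor $(1-\alpha/n)$. I expect the scan dynamics to inherit the same contraction, amplified by a factor $k$ per scan, and both the $O(nk^{-1}\log n)$ upper bound and the matching lower bound should follow from this.

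For the upper bound I would set up a monotone (grand) coupling: two restricted chains share the uniformly chosen vertex set $V_t$ of size $k$ and the uniformly chosen permutation $\pi_t$, and update along $\pi_t$ using a common threshold variable at each step. Because the flip rule is monotone and depends only on the current magnetization, monotonicity is preserved through each of the $k$ sequential within-scan updates. Writing $D_t$ for the Hamming distance between the coupled configurations (equivalently, half the magnetization gap after projecting onto the coupled monotone order), I would iterate the single-update bound $\mathbb{E}[D_{t+1/k}\mid\mathcal{F}_t] \leq (1-\alpha/n)D_t$ across one scan, collecting the lower-order-in-$1/n$ cross terms into an error of size $O(k^{2}/n^{2})D_t$. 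Under $k=o(n^{1/2})$ this error is negligible against $\alpha k/n$, giving per-scan contraction $\mathbb{E}[D_{t+1}\mid \mathcal{F}_t]\leq (1-\alpha k/n+o(k/n))D_t$. Iterating for $c_2 nk^{-1}\log n$ scans drives $\mathbb{E} D_t$ below $1$, so by Markov and the standard coupling inequality the total variation distance from stationarity is at most $1/4$.

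For the lower bound I would use the distinguishing statistic $M_t$ itself, started from the all $+1$ configuration. Within one scan, $M_t$ decreases by at most $2k$ deterministically, and in expectation by $\frac{k}{n}(M_t - ns^{\ast})(1-\alpha) + O(k^{2}/n)$, by a Taylor expansion of $\tanh(\beta M_t/n)$ around $s^{\ast}$. Thus $\mathbb{E}[M_t - ns^{\ast}] \geq (1-\alpha k/n)^{t}\,n(1-s^{\ast}) - O(k)$, so for $t \leq c_1 nk^{-1}\log n$ with $c_1$ small enough, $\mathbb{E}[M_t - ns^{\ast}] \gg \sqrt{n}$. A variance estimate via the Efron--Stein or martingale increment bound (again exploiting $k = o(\sqrt{n})$) gives $\mathrm{Var}(M_t) = O(n)$, so Chebyshev separates the law of $M_t$ from its restricted stationary law $\mu^{+}$, whose magnetization concentrates in a $O(\sqrt{n})$ window around $ns^{\ast}$. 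This yields $d(t)\geq 1 - o(1)$, proving the lower bound.

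The main obstacle is the second paragraph's accumulation of within-scan errors: each of the $k$ sequential updates in a scan sees a slightly modified magnetization, and the naive linear expansion of the drift produces $O(k^{2}/n^{2})$ corrections per pair of updates. Controlling these cleanly, and verifying that they are dominated by the contraction $\alpha k/n$ precisely when $k=o(n^{1/2})$, is the delicate step; the analysis should parallel the within-scan expansion used for Theorem~\ref{thm: main1}, but with the low-temperature constant $\alpha$ in place of $1-\beta$ and the restriction ensuring we never leave the basin of $s^{\ast}$.
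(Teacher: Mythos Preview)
Your upper bound strategy has a genuine gap. The contraction factor $(1-\alpha/n)$ per single-site update, with $\alpha = 1 - \beta(1-(s^*)^2) = 1 - \beta\,\mathrm{sech}^2(\beta s^*)$, holds only when the magnetization is near or above $s^*$. When $s$ is close to $0$ the derivative of $s\mapsto \tanh(\beta s)$ equals $\beta > 1$, so the expected magnetization gap between two monotonically coupled copies \emph{expands} rather than contracts. The restricted dynamics here only reflects at $0$ (if a scan produces $S(\sigma') < 0$, replace $\sigma'$ by $-\sigma'$); it does not confine the chain to a neighborhood of $s^*$, so your claim that ``the restriction ensures we never leave the basin of $s^*$'' is not correct. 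A chain started at $S=0$ must climb up to $s^*$, and showing this climb takes only $O(nk^{-1}\log n)$ time is the hard half of the upper bound. The paper devotes Lemma~\ref{lem: hitting time estimate from below} and the entire appendix to this hitting-time estimate, via a birth--death-type analysis of the intermediates and an explicit integral bound for $\mathbb{E}_0[\tau_*]$. Your contraction argument gives no handle on this regime. A secondary issue: the configuration-level monotone coupling you invoke does not survive the reflection step; the paper notes this explicitly and works instead with a magnetization-level monotone coupling, sandwiching a stationary copy between chains started from $S=1$ and $S=0$ and waiting for them to cross near $s^*$.

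Your lower bound sketch is plausible in spirit but differs from the paper's route, which is a coupon-collector argument: couple with a stationary copy, track the number of its minus-sites that have not yet been selected by time $t^* = \tfrac14 nk^{-1}\log n$, and show this forces $S_{t^*} \geq s^* + c\,n^{-1/4}$. Your direct drift-plus-variance approach could be made to work, but the bound $\mathrm{Var}(M_t) = O(n)$ is not immediate for $\beta > 1$: the paper's $O(n)$ variance estimate (Lemma~\ref{lem: variance estimate no.2}) rests on the global contraction of Proposition~\ref{prop: magnetization contraction}, which fails in the low-temperature regime, so you would need a separate argument localized to the region above $s^*$.
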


    There are two remarks on this dynamics compared to the block dynamics: Randomized systematic scan dynamics updates $k$ vertices one by one while block dynamics updates $k$ vertices at once. Unlike the block dynamics, randomized systematic scan dynamics generates a small amount of error during $k$ numbers of single-site updates. This error accumulation makes magnetization analysis more difficult. Secondly, if $k$ is large, computer modeling on randomized scan dynamics is more efficient than block dynamics, as one update with randomized scan Glauber dynamics can be divided into $k$ single-site updates, which requires much less memory storage.

    \cite{DGJ08} showed the mixing time of the Curie-Weiss model in high temperature regime is of order $n\log n$ under systematic scan. We conjecture the highest order term for the mixing time for $k = n$ case is $[2(1-\beta)]^{-1}\log n$, which is consistent to Theorem \ref{thm: main1}. We expect there must be a cutoff under both systematic scan and the randomized systematic scan. All three abovementioned Theorems \ref{thm: main1}, \ref{thm: main2}, and \ref{thm: main3} are proven by showing both upper bound and lower bound inequalities. All the upper bound are mostly derived from coupling argument, however as $k$ numbers of vertices are updated at each time, several calculations are required to finalize the proofs. The lower bound arguments are heavily relying on the estimates on magnetizations. Section 3, 4, and 5 contains Theorem $\ref{thm: main1}$, \ref{thm: main2}, and \ref{thm: main3} and their proofs respectively. Section 6 briefly discuss the case $k=n$, which is the model without coupon collecting properties. One of the proofs of the lemma which is crucial for Theorem \ref{thm: main3} is moved to the Appendix, Section 7.
     
    \smallskip    
    
    \subsection{Background and previous researches}
    
    The mixing time and cutoff results for the Ising model on the complete graph, also known as Curie-Weiss model, were established on \cite{LLP10}, \cite{DLY09} and \cite{DLP09}, showing that there is a cutoff only in the high temperature regime. For the lattice, Lubetzky and Sly \cite{LS12} showed that there is a cutoff on $(\mathbb{Z} / n\mathbb{Z})^d$ with strong spatial mixing, which always holds when $d = 2$. There is a result on the Ising model on a regular tree also, yet people are working on 3 or higher dimensional lattice. \cite{DP11} suggested a lower bound argument for the mixing time of the Ising model over an arbitrary graph, and \cite{Nes19} suggested another lower bound argument, in terms of the separation distance mixing time.

    Systematic scan naturally emerged from Markov chains, particularly from the card shuffling problems. Since systematic scan excludes coupon-collecting properties from the dynamics, one might expect systematic scan to accelerate the mixing. However, there are only few results on systematic scan dynamics, and several results showed that the random updates chain and systematic scan chain have the same mixing time order. For example, \cite{MPS04} and \cite{SZ07} are systematic scan version of \cite{DS81}, which showed the mixing time of two schemes are in the same order. There are some techniques for systematic scan \cite{SZ07}, \cite{PW13}, \cite{DR04}; however systematic scan dynamics still poses challenges to be analyzed. \cite{bcsv17} is one of few results on the Ising model and systematic scan simultaneously. \cite{bcsv17} studied the mixing time of the Ising model over $d-$dimensional lattice, under the systematic scan.
    
    Particularly for the Ising model over the complete graph, It has been well known that the model's mixing time is of order $n\log n$ under the Glauber dynamics. Furthermore \cite{DGJ08} proved that the order of the mixing time under systematic scan Glauber dynamics is the same as the original one, by utilizing the Dobrushin-Shlosman condition. Nonetheless the existence of cutoff remained uncovered since the lack of symmetry prevents the coupling argument and vertex rematching - the fundamental idea of \cite{LLP10}.

\section{Preliminaries}
    In this section we present some definitions and several properties, which are used for the remainder of the paper. The Ising model on a given finite graph $G = (V,\mathcal{E})$ with a parameter $\beta > 0$ is a probability distribution over the state space $\Omega := \{+1, -1\}^V$ with the probability of $\sigma \in \Omega$ given by the Gibbs measure
    \begin{equation*}
        \pi(\sigma) = \frac{1}{Z(\beta)}\exp\left(\beta \sum_{(vw) \in \mathcal{E}} J(v, w)\sigma(v)\sigma(w)\right),
    \end{equation*}
    where $Z(\beta)$ is a normalizing constant. We assume there are no external fields. $\beta$ is often interpreted as an inverse-temperature, is chosen to be non-negative for ferromagnetism.

    One of the common ways to evolve the Ising model is the Glauber dynamics, which is the following: from configuration $\sigma$, a vertex $v \in V$ is uniformly chosen, and a new configuration is selected from the set $\{\eta \in \Omega: \eta(w) = \sigma(w), w \neq v\}$. The new configuration and $\sigma$ agree on all vertices except at $v$, and at $v$ the new spin is $\pm1$ with probability
    \begin{equation} \label{eq: 1-1}
        p(\sigma;v) = \frac{e^{\pm\beta S^\nu(\sigma)}}{e^{\beta S^\nu(\sigma)} + e^{-\beta S^\nu(\sigma)}},
    \end{equation}
    where $S^\nu(\sigma) := (1/n)\sum_{w : (wv) \in \mathcal{E}} \sigma(w)$. Therefore, the new spin at $v$ only depends on the current spins of the neighboring vertices of $v$. Pick an element from $\Omega$ and evolve with the Glauber dynamics generates a discrete-time Markov chain.

    From now on, the Ising model under the randomized systematic scan dynamics will be denoted by $\{X_t\}^\infty_{t=0}$. We use $\mathbb{P}_\sigma$ and $\mathbb{E}_\sigma$ to denote the probability measure and associated expectations given $X_0 = \sigma$. We define a coupling of the dynamics as a process $(X_t, \widetilde{X}_t)_{t\geq 0}$, where each $\{X_t\}$ and $\{\widetilde{X}_t\}$ are versions of the dynamics. Similarly we write $\mathbb{P}_{\sigma, \widetilde{\sigma}}$ and $\mathbb{E}_{\sigma, \widetilde{\sigma}}$ for the probability measure and associated expectation respectively, starting from $\sigma$ and $\widetilde{\sigma}$.
    
    For a configuration $\sigma \in \{-1, 1\}^V$, denote $\sigma(v)$ as a spin on the vertex $v$. As we mainly focus on complete graphs, sometimes we denote $\sigma(i)$ for $i \in \{1, ..., n\}$ as a spin on the $i$-th vertex. Define \textit{magnetization} of $\sigma$ as the average of all spins of $\sigma$:
    \begin{equation*}
        S(\sigma) := \frac{1}{n}\sum_{v\in V}{\sigma(v)}.
    \end{equation*}
    We are going to denote the magnetization of $X_t$ as $S_t := S(X_t)$, for simplicity's sake. Due to the symmetry of $K_n$, defining those functions are helpful to abbreviate equations:
    \begin{equation} \label{eq: 2-1}
        \begin{split}
            p_{+}(x) &:= \frac{1+\tanh{\beta x}}{2} \\
            p_{-}(x) &:= \frac{1-\tanh{\beta x}}{2} = 1 - p_{+}(x)
        \end{split}
    \end{equation}
    Then the probability $p(\sigma; v)$ from Equation \eqref{eq: 1-1} equals $p_\pm\left(S(\sigma) - \sigma(v)/n\right)$.

    We uniformly randomly choose $k \leq n$ vertices among $n$ vertices and pick a permutation uniformly randomly on these selected vertices. We update $k$ vertices in that order and repeat the entire process. This randomized systematic scan updates $k$ vertices at each time $t$, we may try to split it into $k$ numbers of single-site updates; to elaborate, we can consider $(k+1)$ states $Y_0 = X_0, Y_1, ..., Y_{k-1}, Y_k = X_1$ such that $Y_{i+1}$ is derived from the single-site update from $Y_i$. As $i$ number of vertices of $Y_i$ cannot be updated, $\{Y_i\}$ is not a Markov chain. However, since $S(Y_{i+1}) - S(Y_i) \in \{\pm2/n, 0\}$, we can apply several birth-and-death chains results. Although those results need modifications as $S(Y_i)$ is not a Markov chain, considering $\{Y_i\}$ still useful for coupling arguments and hitting time estimates. We denote these $\{Y_i\}$ as \textit{intermediate states}. Sometimes we consider the extended version of intermediate states $\{Y_i\}_{0\leq i \leq kT}$ such that $Y_{ki} = X_i$ for all $i \leq T$. For each intermediate state $Y_i$, $i$(mod $k$) vertices cannot be selected for the update. Call these vertices as \textit{not available vertices} and denote as $\mathcal{N}_i$.
    
    Define the \textit{grand coupling} $\{X_{\sigma, t}\}_{\sigma \in \Omega, t \geq 0}$ from all states of $\Omega$ as the following: for each $\sigma \in \Omega$, the coordinate process $(X_{\sigma, t})_{t \geq 0}$ is the Glauber dynamics that starts from $\sigma$. For each time $t \geq 0$, let $\{Y_{\sigma, i}\}_{0\leq i\leq k}$ be all intermediates such that $Y_{\sigma, 0} = X_{\sigma, t}$. Randomly choose the ordered set of vertices $\{v_1, ..., v_k\}$ to update. $Y_{\sigma, i}$ denotes the configuration started from $X_{\sigma, t}$ with updates on $\{v_1, ..., v_i\}$. Define $U_1, U_2, ..., U_k$ as copies of random variables which are uniform on $[0, 1]$ and let $U_i$ determine the spin $Y_{\sigma, i}(v_i)$ for all $\sigma \in \Omega$ by
    \begin{equation*}
        Y_{\sigma, i}(v_i) =
        \begin{cases}
            +1 &\textrm{if} \quad 0 \leq U \leq p_{+}(S(Y_{\sigma, i-1}) - n^{-1}Y_{\sigma, i-1}(v_i))\\
            -1 &\textrm{if} \quad p_{+}(S(Y_{\sigma, i-1}) - n^{-1}Y_{\sigma, i-1}(v_i)) < U \leq 1,
        \end{cases}
    \end{equation*}
    and $Y_{\sigma, i}(v) = Y_{\sigma, i-1}(v)$ for all $v \neq v_i$. Finally accept $X_{\sigma, t+1} = Y_{\sigma, k}$. The construction ensures that $X_{\sigma, t+1}(w) = X_{\sigma, t}(w)$ for all vertices but $\{v_1, ..., v_k\}$. For any two given configurations $\sigma$ and $\widetilde{\sigma}$, define \textit{monotone coupling} $(X_{\sigma, t}, X_{\widetilde{\sigma}, t})$ as the projection of the grand coupling with starting configurations $\sigma$ and $\widetilde{\sigma}$. 
    
    For two spin configurations $\sigma$ and $\widetilde{\sigma}$, define their Hamming distance as the number of disagreeing vertices:
    \begin{equation*}
        \textrm{dist}(\sigma, \widetilde{\sigma}) := \frac{1}{2}\sum^{n}_{j=1}|\sigma(j) - \widetilde{\sigma}(j)|.
    \end{equation*}
    Then Hamming distance contracts under monotone coupling in high temperature regime.
    
    \begin{proposition} \label{prop: hamming distance contraction}
        Under the randomized systematic scan dynamics, the monotone coupling $(X_t, \widetilde{X}_t)$ with $(X_0, \widetilde{X}_0) = (\sigma, \widetilde{\sigma})$ satisfies
        \begin{equation} \label{eq: 2-2}
            \mathbb{E}_{\sigma, \widetilde{\sigma}}\big[ \normalfont\textrm{dist}(X_t, \widetilde{X}_t)\big] \leq \frac{1}{\beta}\Big[1 + (\beta-1)\Big(1+\frac{\beta}{n}\Big)^k\Big]^t \normalfont\textrm{dist}(\sigma, \widetilde{\sigma}).
        \end{equation}
    \end{proposition}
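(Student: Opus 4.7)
The plan is to reduce to a one-scan contraction estimate and then iterate by the Markov property. Condition on starting configurations $\sigma,\widetilde{\sigma}$; write $Y_0 = \sigma$, $\widetilde{Y}_0 = \widetilde{\sigma}$, and let $\{Y_i\}_{0\leq i\leq k}$, $\{\widetilde{Y}_i\}_{0\leq i\leq k}$ denote the intermediate configurations of a single scan generated from the common ordered vertex list $(v_1,\ldots,v_k)$ and common uniforms $(U_1,\ldots,U_k)$ of the grand coupling. Set $D_i := \textrm{dist}(Y_i,\widetilde{Y}_i)$ and introduce the auxiliary statistic $A_i := |\{v \notin \mathcal{N}_i : Y_i(v) \neq \widetilde{Y}_i(v)\}|$, counting disagreements that still sit on available vertices. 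It suffices to prove the single-scan estimate
\begin{equation*}
    \mathbb{E}_{\sigma,\widetilde{\sigma}}[D_k] \leq \tfrac{1}{\beta}\bigl[1 + (\beta-1)(1+\beta/n)^k\bigr] D_0,
\end{equation*}
after which \eqref{eq: 2-2} follows by iterating this inequality over $t$ consecutive scans via the Markov and tower properties.

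For the one-step analysis, I would observe that $D_{i+1} - D_i = \mathbf{1}\{\text{new disagreement at } v_{i+1}\} - \mathbf{1}\{\text{old disagreement at } v_{i+1}\}$. Under the shared uniform $U_{i+1}$, the new-disagreement probability equals $|p_+(m(\widetilde{Y}_i,v_{i+1})) - p_+(m(Y_i,v_{i+1}))|$ with $m(Y,v) := S(Y) - Y(v)/n$. Since $p_+$ is $(\beta/2)$-Lipschitz and $|m(\widetilde{Y}_i,v) - m(Y_i,v)| \leq (2/n)\bigl(D_i - \mathbf{1}\{v \text{ is a disagreement}\}\bigr)$, averaging over the uniform pick $v_{i+1}$ in the $n-i$ available vertices yields the key recursion
\begin{equation*}
    \mathbb{E}[D_{i+1}\mid\mathcal{F}_i] \leq \Bigl(1+\frac{\beta}{n}\Bigr)\Bigl[D_i - \frac{A_i}{n-i}\Bigr].
\end{equation*}
A parallel but simpler calculation gives $\mathbb{E}[A_{i+1}\mid\mathcal{F}_i] = A_i(n-i-1)/(n-i)$, so by induction $\mathbb{E}[A_i] = D_0(n-i)/n$ and $\mathbb{E}[A_i/(n-i)] = D_0/n$ is constant in $i$. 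Substituting and writing $c := 1+\beta/n$ reduces everything to the linear recurrence $\bar D_{i+1}\leq c\bar D_i - cD_0/n$, which I would solve in closed form and then verify by one line of algebra that the solution is bounded by $\tfrac{1}{\beta}[1+(\beta-1)c^k]D_0$.

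The main obstacle is that dropping the $A_i/(n-i)$ correction is not affordable: the crude inequality $\mathbb{E}[D_{i+1}\mid\mathcal{F}_i]\leq (1+\beta/n)D_i$ iterates to the per-scan factor $(1+\beta/n)^k > 1$ and gives no contraction at all, even in the high-temperature regime $\beta<1$. The sharper prefactor $1/\beta$ in the statement only emerges by retaining the auxiliary statistic $A_i$ and exploiting the exact identity $\mathbb{E}[A_i] = D_0(n-i)/n$ arising from the sampling-without-replacement structure of a single scan. Once this single-scan bound is in hand, iterating it over $t$ scans completes the proof of \eqref{eq: 2-2}.
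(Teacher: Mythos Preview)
Your proof is correct and reaches the same single-scan bound as the paper, but the bookkeeping is organized differently. The paper first reduces to the case $\textrm{dist}(\sigma,\widetilde{\sigma})=1$ via a path-coupling argument, conditions on whether (and at which position $j$) the unique disagreement vertex $I$ appears in the ordered update list, and then applies the crude per-step bound $\mathbb{E}[D_{i+1}\mid D_i]\leq(1+\beta/n)D_i$ on all steps except the one that hits $I$; averaging over the position $j$ produces the sum $\tfrac{1}{n}\sum_{j=1}^{k}[(1+\beta/n)^k-(1+\beta/n)^j]+\tfrac{n-k}{n}(1+\beta/n)^k$. Your route avoids the reduction to $D_0=1$ by introducing the auxiliary count $A_i$ of \emph{available} disagreements, observing that $A_i$ depends only on the sampling-without-replacement of $(v_1,\ldots,v_i)$ (not on the uniforms), and hence $\mathbb{E}[A_i/(n-i)]=D_0/n$ exactly; this turns the recursion into a clean linear recurrence $\bar D_{i+1}\leq c\bar D_i-cD_0/n$ whose solution is the same expression. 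The two computations are in fact algebraically identical---your $A_i$ for $D_0=1$ is precisely the indicator that $I$ has not yet been selected---but your packaging is slightly more direct (no triangle-inequality extension at the end) while the paper's path-coupling formulation is the more standard template and makes the role of the single disagreement explicit.
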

    
    \begin{remark*}
        Proposition \ref{prop: hamming distance contraction} remains true for any $k \leq n$. If $\beta < 1$,
        \begin{equation*}
            \rho := \frac{1}{\beta}\Big[1 + (\beta-1)\Big(1+\frac{\beta}{n}\Big)^k\Big] \leq 1 - \frac{k(1-\beta)}{n} < 1
        \end{equation*}
    holds, thus Hamming distance tends to decrease in the high temperature regime.
    \end{remark*}
    
    \begin{proof}
        We consider the special case when dist$(\sigma, \widetilde{\sigma}) = 1$ and $t = 1$. Let $I$ be a vertex that has two different spins: $1 = \sigma(I) \neq \widetilde{\sigma}(I) = -1$. With probability $\frac{n-k}{n}$ the vertex $I$ is not selected to be updated; let $\mathcal{P}$ be the ordered $k$ vertices set to be updated. From $X_0$ to $X_1$, if $I \notin \mathcal{P}$, in terms of intermediate states $\{Y_i\}_{0\leq i \leq k}$ and $\{\widetilde{Y}_i\}_{0 \leq i \leq k}$,
        \begin{equation} \label{eq: 2-3}
            \begin{split}
                &\mathbb{E}_{\sigma, \widetilde{\sigma}} \big[\textrm{dist}(Y_{i+1}, \widetilde{Y}_{i+1}) | \textrm{dist}(Y_i, \widetilde{Y}_i)\big] \\
                &\qquad = \textrm{dist}(Y_i, \widetilde{Y}_i) + \big|p_{+}(S(Y_i) - n^{-1}Y_i(v)) - p_{+}(S(\widetilde{Y}_i) - n^{-1}\widetilde{Y}_i(v))\big| \\
                &\qquad \leq \textrm{dist}(Y_i, \widetilde{Y}_i) + \tanh\Big(\beta\frac{|S(Y_i) - S(\widetilde{Y}_i)|}{2}\Big) \\
                &\qquad \leq \Big(1 + \frac{\beta}{n}\Big)\textrm{dist}(Y_i, \widetilde{Y}_i),
            \end{split}
        \end{equation}
        for some $v\neq I$. Otherwise, suppose $I$ is chosen to be $j$-th updated vertex. Equation \eqref{eq: 2-3} holds for every indices $i \neq j-1$, but for $j - 1$,
        \begin{equation*}
            \begin{split}
                &\mathbb{E}_{\sigma, \widetilde{\sigma}} \big[\textrm{dist}(Y_{j}, \widetilde{Y}_{j}) | \textrm{dist}(Y_{j-1}, \widetilde{Y}_{j-1}) \big] \\
                &\qquad = \textrm{dist}(Y_{j-1}, \widetilde{Y}_{j-1}) - \left(1 - \big|p_{+}(S(Y_{j-1}) - n^{-1}Y_{j-1}(v)) - p_{+}(S(\widetilde{Y}_{j-1}) - n^{-1}\widetilde{Y}_{j-1}(v))\big|\right) \\
                &\qquad \leq \textrm{dist}(Y_{j-1}, \widetilde{Y}_{j-1}) + \tanh\Big(\beta\frac{S(Y_{j-1}) - S(\widetilde{Y}_{j-1}) - 2/n}{2}\Big) - 1 \\
                &\qquad \leq \Big(1 + \frac{\beta}{n}\Big)\big(\textrm{dist}(Y_{j-1}, \widetilde{Y}_{j-1})-1\big),
            \end{split}
        \end{equation*}
        due to the fact that $S(Y_i) \geq S(\widetilde{Y}_i) + 2/n$ for any $i \leq j-1$. Other than $i = j-1$ we can apply Equation \eqref{eq: 2-3} hence we have
        \begin{equation} \label{eq: 2-4}
            \mathbb{E}_{\sigma, \widetilde{\sigma}} [\textrm{dist}(X_1, \widetilde{X}_1)|\textrm{dist}(X_0, \widetilde{X}_0)] \leq \left(1 + \frac{\beta}{n}\right)^k - \left(1 + \frac{\beta}{n}\right)^j
        \end{equation}
        for $I \in \mathcal{P}$. Combining Equation \eqref{eq: 2-3} and Equation \eqref{eq: 2-4},
        \begin{equation} \label{eq: 2-5}
            \begin{split}
                &\mathbb{E}_{\sigma, \widetilde{\sigma}} \big[\textrm{dist}(X_1, \widetilde{X}_1)|\textrm{dist}(X_0, \widetilde{X}_0)\big] \\
                &\qquad \leq \frac{1}{n}\sum_{i=0}^{k-1}\Big[\Big(1+\frac{\beta}{n}\Big)^k - \Big(1 + \frac{\beta}{n}\Big)^{i+1}\Big] + \frac{n-k}{n}\Big(1 + \frac{\beta}{n}\Big)^k \\
                &\qquad = \frac{1}{\beta}\Big\{1 + \frac{\beta}{n}\Big[1-\Big(1 + \frac{\beta}{n}\Big)^k\Big] + (\beta-1)\Big(1 + \frac{\beta}{n}\Big)^k \Big\} \\
                &\qquad \leq \frac{1}{\beta}\Big[1 + (\beta-1)\Big(1 + \frac{\beta}{n}\Big)^k \Big].
            \end{split}
        \end{equation}
        To finish the proof, for general $\sigma, \widetilde{\sigma}$, we can choose dist$(\sigma, \widetilde{\sigma}) +1$ numbers of states so that they form a sequence from $\sigma$ to $\widetilde{\sigma}$ that the Hamming distances between any consecutive states are 1. Equation \eqref{eq: 2-5} and trigonometric inequality finishes this case, and recursive application gives Equation \eqref{eq: 2-2}.
    \end{proof}
    
    For simplicity of notation, we write $S_t := S(X_t)$ from now on. As $|S_t - \widetilde{S}_t| \leq (2/n)\textrm{dist}(X_t, \widetilde{X}_t)$ at any time $t$, we can transform Proposition \ref{prop: hamming distance contraction} to an equation in terms of magnetization.
    
    \begin{proposition} \label{prop: magnetization contraction}
        Suppose $\beta < 1$. For any two configurations $\sigma$ and $\widetilde{\sigma}$, under the monotone coupling,
        \begin{equation} \label{eq: 2-6}
            \mathbb{E}_{\sigma, \widetilde{\sigma}}\left[ |S_t - \widetilde{S}_t| \right] \leq \frac{2}{n}\rho^t \normalfont\textrm{dist}(\sigma, \widetilde{\sigma}) \leq 2\rho^t,
        \end{equation}
        where $\rho = 1 - k(1-\beta)/n < 1$.
    \end{proposition}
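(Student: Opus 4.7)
The plan is to obtain Proposition \ref{prop: magnetization contraction} as an immediate corollary of Proposition \ref{prop: hamming distance contraction}, using the fact that the magnetization is a $(2/n)$-Lipschitz function of the configuration with respect to Hamming distance.

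First I would observe that for any two configurations $\sigma, \widetilde{\sigma} \in \Omega$,
\begin{equation*}
    |S(\sigma) - S(\widetilde{\sigma})| = \Big|\frac{1}{n}\sum_{v\in V}(\sigma(v) - \widetilde{\sigma}(v))\Big| \leq \frac{1}{n}\sum_{v\in V}|\sigma(v)-\widetilde{\sigma}(v)| = \frac{2}{n}\textrm{dist}(\sigma, \widetilde{\sigma}),
\end{equation*}
since each disagreeing site contributes exactly $2$ to the sum. Applying this pointwise to the coupled pair $(X_t, \widetilde{X}_t)$ and then taking expectations under the monotone coupling yields
\begin{equation*}
    \mathbb{E}_{\sigma, \widetilde{\sigma}}[|S_t - \widetilde{S}_t|] \leq \frac{2}{n}\mathbb{E}_{\sigma, \widetilde{\sigma}}[\textrm{dist}(X_t, \widetilde{X}_t)],
\end{equation*}
after which Proposition \ref{prop: hamming distance contraction} bounds the right-hand side by $(2/n)\big\{\tfrac{1}{\beta}[1+(\beta-1)(1+\beta/n)^k]\big\}^t\textrm{dist}(\sigma,\widetilde{\sigma})$.

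Next I would verify the algebraic inequality stated in the remark following Proposition \ref{prop: hamming distance contraction}, namely that for $\beta<1$,
\begin{equation*}
    \frac{1}{\beta}\Big[1 + (\beta-1)\Big(1+\frac{\beta}{n}\Big)^k\Big] \leq 1 - \frac{k(1-\beta)}{n} = \rho.
\end{equation*}
This follows from Bernoulli's inequality $(1+\beta/n)^k \geq 1 + k\beta/n$ together with the sign reversal produced by the negative factor $\beta - 1 < 0$; a short expansion gives the desired bound. Combining with the previous display proves the first inequality of \eqref{eq: 2-6}. The second inequality is immediate from $\textrm{dist}(\sigma,\widetilde{\sigma}) \leq n$.

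Since every ingredient is already in hand from Proposition \ref{prop: hamming distance contraction} and its remark, there is no real obstacle here; the only point requiring any care is keeping track of the sign of $\beta-1$ in the Bernoulli step so that the inequality flips in the correct direction to yield the clean expression $\rho = 1 - k(1-\beta)/n$.
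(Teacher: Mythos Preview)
Your proposal is correct and is essentially identical to the paper's own argument: the paper simply notes that $|S_t-\widetilde{S}_t|\le (2/n)\,\textrm{dist}(X_t,\widetilde{X}_t)$, applies Proposition~\ref{prop: hamming distance contraction}, and invokes the remark (the Bernoulli-type bound) to replace the contraction factor by $\rho=1-k(1-\beta)/n$. Your write-up just makes the Bernoulli step explicit, which is fine.
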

    
    The constant $\rho$ comes from Proposition \ref{prop: hamming distance contraction}. Furthermore, for any two configurations $\sigma$ and $\widetilde{\sigma}$, with the same constant $\rho < 1$ we can prove
    \begin{equation} \label{eq: 2-7}
        |\mathbb{E}_\sigma[S_1] - \mathbb{E}_{\widetilde{\sigma}}[S_1]| \leq \rho|S_0 - \widetilde{S}_0|
    \end{equation}
    in an analogous way.
    
    Often magnetization becomes supermartingale, so the lemma below is utilized throughout the paper. This lemma is suggested from \cite{LP10} chapter 18 with its proof so we omit the proof here.
    
    \begin{lemma} \label{lem: supermartingale lemma}
        Let $(W_t)_{t\geq0}$ be a non-negative supermartingale and $\tau$ be a stopping time such that
        \begin{enumerate}[leftmargin=*]
            \item $W_0 = k$
            \item $W_{t+1} - W_t \leq B$
            \item  $\normalfont \textrm{Var}(W_{t+1}|\mathcal{F}_t) > \sigma^2 > 0 \textrm{ on the event } \tau > t$.
        \end{enumerate}
        If $u > 4B^2/(3\sigma^2)$, then
        \begin{equation*}
            \mathbb{P}_k(\tau > u) \leq \frac{4k}{\sigma\sqrt{u}}.
        \end{equation*}
    \end{lemma}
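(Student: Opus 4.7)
\emph{Proof plan.} Since the author attributes this to Levin--Peres--Wilmer, the natural route is a second-moment martingale argument applied to the stopped supermartingale. Set $\tau_0 := \tau \wedge u$; optional stopping on the non-negative supermartingale $W$ gives $\mathbb{E}_k[W_{\tau_0}] \leq W_0 = k$. The idea is to complement this $L^1$ bound with an $L^2$ lower bound coming from condition (3), and then combine them via a Chebyshev-type argument to extract an upper bound on $\mathbb{P}(\tau > u)$.

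For the $L^2$ side I would expand
\[
W_{\tau_0}^2 - k^2 \;=\; \sum_{t=0}^{\tau_0-1}\bigl[(W_{t+1}-W_t)^2 + 2W_t(W_{t+1}-W_t)\bigr]
\]
and take expectations. Condition (3) gives $\mathbb{E}[(W_{t+1}-W_t)^2\mid\mathcal{F}_t]\geq \mathrm{Var}(W_{t+1}\mid\mathcal{F}_t)\geq \sigma^2$ on $\{\tau>t\}$, so after summation the first sum contributes at least $\sigma^2\mathbb{E}[\tau_0]$. The cross term has non-positive conditional expectation (non-negative $W_t$ times a supermartingale increment), so it goes the wrong way and must be controlled in absolute value; combining condition (2) with the non-negativity of $W_t$ bounds $|\mathbb{E}[W_t(W_{t+1}-W_t)\mid\mathcal{F}_t]| \leq B\,W_t$, and the stated threshold $u > 4B^2/(3\sigma^2)$ is precisely what is needed for the variance contribution to dominate, yielding an estimate of the form
\[
\mathbb{E}[W_{\tau_0}^2] \;\geq\; \tfrac{3}{4}\sigma^2\,\mathbb{E}[\tau_0] \;\geq\; \tfrac{3}{4}\sigma^2 u\,\mathbb{P}(\tau > u).
\]

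To conclude, in the intended applications $\tau$ is the first hitting time of $0$, so $W_{\tau_0}$ vanishes on $\{\tau\leq u\}$ and equals $W_u$ on $\{\tau>u\}$. Picking $c = \sigma\sqrt{u}/2$ and using the $L^2$ lower bound together with a Paley--Zygmund--type argument on the conditional law of $W_u$ given $\{\tau > u\}$ to deduce $\mathbb{P}(W_u \geq c\mid \tau>u)\geq \tfrac{1}{2}$, a Markov bound on $\mathbb{E}[W_u\mathbf{1}_{\tau>u}] \leq k$ gives $\mathbb{P}(W_u\geq c,\,\tau>u) \leq k/c$, and combining the two estimates yields $\mathbb{P}(\tau > u)/2 \leq k/c$, i.e.\ $\mathbb{P}(\tau > u) \leq 4k/(\sigma\sqrt{u})$. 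The main obstacle I foresee is the cross-term control: condition (2) is stated one-sidedly, so one must exploit $W_t\geq 0$ in tandem with the one-sided jump bound (rather than a naive two-sided bound) to recover the sharp threshold $u > 4B^2/(3\sigma^2)$ and the constant $4$ in the final bound.
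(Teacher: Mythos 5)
Your overall plan (play a second-moment lower bound against the first-moment bound $\mathbb{E}[W_{\tau\wedge u}]\leq k$) is reasonable in spirit, but the step you yourself flag as the main obstacle is a genuine gap, not a technicality. The bound $|\mathbb{E}[W_t(W_{t+1}-W_t)\mid\mathcal{F}_t]|\leq B\,W_t$ is false: condition (2) controls increments only from above, and the only a priori lower bound is $W_{t+1}-W_t\geq -W_t$, so the downward drift can be of order $W_t$ and the cross term of order $W_t^2$, which is unbounded and unrelated to $B$. Hence the threshold $u>4B^2/(3\sigma^2)$ cannot make the variance term dominate, and your intermediate inequality $\mathbb{E}[W_{\tau\wedge u}^2]\geq \tfrac{3}{4}\sigma^2\,\mathbb{E}[\tau\wedge u]$ is simply false in general. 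Concretely, let $W_0=k$, let $W_{t+1}=W_t+B$ or $W_{t+1}=0$ each with probability $1/2$, absorb at $0$, and let $\tau$ be the hitting time of $0$ (take $k\gg B$). This is a non-negative supermartingale satisfying (1)--(3) with $\sigma^2=k^2/4$, yet $\mathbb{E}[W_{\tau\wedge u}^2]=2^{-u}(k+uB)^2\to 0$ as $u$ grows, while $\tfrac34\sigma^2\mathbb{E}[\tau\wedge u]\to\tfrac34\sigma^2\mathbb{E}[\tau]\asymp k^2$. So this route cannot be repaired by more careful bookkeeping of the same expansion; the difficulty is structural (the process is unbounded above relative to its mean, and the negative drift term carries weight $W_t$, not $B$).

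The endgame has two further problems: the lemma is stated for an arbitrary stopping time, so you may not assume $W_{\tau\wedge u}=0$ on $\{\tau\leq u\}$ ("first hitting time of $0$" is not a hypothesis); and the claim $\mathbb{P}(W_u\geq \sigma\sqrt{u}/2\mid\tau>u)\geq 1/2$ does not follow from a lower bound on a second moment alone --- without an upper bound on the conditional second (or higher) moment, the conditional law can concentrate near $0$ except on a rare event, so Paley--Zygmund gives nothing here. For comparison, the paper itself gives no proof (it defers to \cite{LP10}, where this is a proposition in the martingale chapter), and the standard argument there avoids both issues by running the variance-accumulation computation on a process that is bounded: fix a level $h$, let $\rho=\tau\wedge u\wedge\min\{t:W_t\geq h\}$, and note that $Z_t:=h+B-W_{t\wedge\rho}\in[0,h+B]$ is a non-negative submartingale whose conditional variance is at least $\sigma^2$ before $\rho$, so that $Z_t^2-\sigma^2(t\wedge\rho)$ is a submartingale; taking expectations gives $\sigma^2\,\mathbb{E}[\rho]\leq (h+B)^2-(h+B-k)^2\leq 2k(h+B)$, while the maximal inequality for non-negative supermartingales gives $\mathbb{P}(\max_{t\leq u}W_t\geq h)\leq k/h$. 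Combining the two and choosing $h\asymp\sigma\sqrt{u}$, with the hypothesis $u>4B^2/(3\sigma^2)$ used to absorb the $B$ term, yields the stated $4k/(\sigma\sqrt{u})$ bound. Note how the truncation at level $h$ makes the "wrong-way" drift term harmless (it has the favorable sign for the bounded submartingale $Z$), which is exactly the point where your expansion breaks down.
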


    Lemma \ref{lem: supermartingale lemma} requires a lower bound on a variance. Each step of the original Glauber dynamics generates a variance of magnetization of order $1/n^2$. Since the randomized systematic scan consists of $k$ numbers of single-site update, we can guess the variance should be of order $k/n^2$.
    
    \begin{lemma} \label{lem: variance estimate no.1}
        Suppose $k = o(n)$. For any randomized scan dynamics chain $X_t$ with arbitrary starting state $\sigma$, $\normalfont \textrm{Var}_\sigma[S_1]$ is of order $k / n^2$.
    \end{lemma}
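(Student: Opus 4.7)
The plan is to analyze $S_1 = S(Y_k)$ through the Doob martingale
\[
L_i := \mathbb{E}_\sigma[S_1 \mid \mathcal{H}_i], \qquad i = 0, 1, \ldots, k,
\]
for the filtration $\mathcal{H}_i$ generated by the first $i$ single-site updates $(v_j, U_j)_{j \leq i}$ used in the grand coupling; then $L_0 = \mathbb{E}_\sigma[S_1]$, $L_k = S_1$, and orthogonality of martingale differences yields
\[
\textrm{Var}_\sigma[S_1] = \sum_{i=1}^{k} \mathbb{E}\bigl[(L_i - L_{i-1})^2\bigr].
\]
The task therefore reduces to showing $\mathbb{E}[(L_i - L_{i-1})^2] = \Theta(1/n^2)$ uniformly in $i$ and $\sigma$.

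For the upper bound, $L_{i-1} = \mathbb{E}[L_i \mid \mathcal{H}_{i-1}]$ is a conditional average, so $|L_i - L_{i-1}|$ is bounded by the oscillation of $L_i$ as the fresh pair $(v_i, U_i)$ varies. Two realizations produce intermediate states at Hamming distance at most $2$; feeding both into the grand coupling for the remaining $k - i$ single-site updates and iterating the one-step inequality \eqref{eq: 2-3} from the proof of Proposition \ref{prop: hamming distance contraction}, the expected Hamming distance at time $k$ is at most $2(1 + \beta/n)^{k-i} \leq 2 e^\beta$. Since $|S(Y) - S(Y')| \leq (2/n)\,\textrm{dist}(Y, Y')$, this yields $|L_i - L_{i-1}| \leq C_\beta/n$ almost surely and hence $\textrm{Var}_\sigma[S_1] \leq C_\beta^2 \, k/n^2$.

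For the matching lower bound, condition on both $\mathcal{H}_{i-1}$ and the choice of $v_i$. Then $U_i$ selects $Y_i$ between the two configurations $Y_i^{(\pm)}$ agreeing with $Y_{i-1}$ except at $v_i$, with conditional probabilities $p := p_+(S(Y_{i-1}) - Y_{i-1}(v_i)/n)$ and $1 - p$; because $|S(Y_{i-1}) - Y_{i-1}(v_i)/n| \leq 2$, we have $p,\, 1 - p \geq c_0 := (1 - \tanh 2\beta)/2 > 0$. The essential structural feature of randomized systematic scan is that $v_i$ is excluded from the remaining selections $v_{i+1}, \ldots, v_k$: applying the grand (monotone) coupling to the remaining $k - i$ single-site updates, the disagreement at $v_i$ persists to time $k$, and componentwise monotonicity $Y_k^{(+)} \geq Y_k^{(-)}$ forces $S(Y_k^{(+)}) - S(Y_k^{(-)}) \geq 2/n$ almost surely. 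Taking expectations gives $L_i^{(+)} - L_i^{(-)} \geq 2/n$, and hence
\[
\mathbb{E}[(L_i - L_{i-1})^2 \mid \mathcal{H}_{i-1}] \;\geq\; \mathbb{E}\bigl[\,\textrm{Var}(L_i \mid \mathcal{H}_{i-1}, v_i)\,\big|\,\mathcal{H}_{i-1}\bigr] \;\geq\; c_0(1-c_0)(2/n)^2.
\]
Summing over $i$ gives the matching lower bound $\textrm{Var}_\sigma[S_1] \geq 4 c_0(1-c_0)\, k/n^2$.

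The main obstacle is the upper-bound step: Proposition \ref{prop: hamming distance contraction} is stated for a full round of $k$ single-site updates starting from a fixed pair of configurations, whereas here I need the same kind of control for the tail of only $k - i$ updates, and when the two realizations of $v_i$ differ the two coupled worlds additionally disagree about which vertex is forbidden in the remaining selections. Both issues can be absorbed into the same constant by working at the single-site level with \eqref{eq: 2-3}: the worst-case per-update Hamming growth factor $(1 + \beta/n)$ is insensitive to how many updates remain, and the size-one discrepancy in the forbidden sets contributes only an additional $O(1/n)$ perturbation.
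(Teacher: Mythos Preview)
Your argument is correct and takes a genuinely different route from the paper.  The paper bounds $\textrm{Var}_\sigma[S(Y_i)]$ recursively in $i$ by applying the law of total variance at each single-site step, conditioning on the increment $\mathcal{R}=S(Y_{i+1})-S(Y_i)$; this yields $v_{i+1}\le v_i+c_1/n^2$ for the upper bound and an analogous claim $w_{i+1}\ge w_i+c_2/n^2$ for the lower bound.  You instead expand $S_1$ along its Doob martingale $L_i=\mathbb{E}_\sigma[S_1\mid\mathcal{H}_i]$ and control each increment separately.  The trade-offs are as follows.  Your upper bound is harder: it requires the Lipschitz estimate $|L_i-L_{i-1}|\le C/n$, which in turn forces you to couple two tails of the scan whose forbidden sets differ at one site.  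Your handling of this is brief but sound; under the natural coupling that swaps $v_i$ and $v_i'$ when one world selects the other's forbidden vertex, the forbidden sets merge after at most one mismatch, and the extra Hamming cost is absorbed into the same $O(1)$ constant.  The paper's upper bound, by contrast, needs nothing beyond $|S(Y_{i+1})-S(Y_i)|\le 2/n$.  On the other hand your lower bound is cleaner: conditioning on $v_i$ and exploiting the systematic-scan feature that $v_i$ is never re-selected gives $L_i^{(+)}-L_i^{(-)}\ge 2/n$ deterministically under the monotone coupling, and the Bernoulli variance bound follows immediately.  The paper's lower-bound recursion is asserted by analogy but is not fully justified by the decomposition given there, since $\mathbb{E}[\textrm{Var}[S(Y_i)\mid\mathcal{R}]]\le \textrm{Var}[S(Y_i)]$ rather than $\ge$.
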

    
    \begin{proof}
        Define $v_i := \max\textrm{Var}_\sigma[S(Y_i)]$ for $t = 0, 1, ..., (k-1)$, where $Y_i$ is an intermediate state and the maximum is taken over all $\sigma \in \Omega$ and all possible vertex update permutations. We have $v_1 = \Omega(n^{-2})$. Let $\mathcal{R} = S(Y_{i+1}) - S(Y_{i}) \in \{-2/n, 0, 2/n\}$. In regards to the variation identity,
        \begin{equation*}
            \begin{split}
                v_{i+1} &= \max_{\sigma \in \Omega} \textrm{Var}_\sigma[S(Y_{i+1})] \\
                &= \max_{\sigma\in\Omega} \Big\{\mathbb{E}\big[\textrm{Var}_\sigma[S(Y_{i+1}) | \mathcal{R}]\big] + \textrm{Var}\big[\mathbb{E}_\sigma[S(Y_{i+1})|\mathcal{R}]\big] \Big\}
            \end{split}
        \end{equation*}
        The first term inside the parenthesis satisfies
        \begin{equation*}
            \mathbb{E}\big[\textrm{Var}_\sigma[S(Y_{i+1}) |\mathcal{R}]\big] = \mathbb{E}\big[\textrm{Var}_\sigma[S(Y_i)]\big] \leq v_i
        \end{equation*}
        because the variance is invariant under constant translation. Also the second term is bounded below by $O(n^{-2})$; There are only three cases to consider for the variance calculation. For any $k$, $0 \leq i \leq k-1$, and $\sigma$, there is $\epsilon = \epsilon(\beta) > 0$ which satisfies
        \begin{equation*}
            \mathbb{P}_\sigma[S(Y_{i+1}) = S(Y_i) + 2/n] + \mathbb{P}_\sigma[S(Y_{i+1}) = S(Y_i) - 2/n] \in [\epsilon, 1-\epsilon].
        \end{equation*}
        Therefore there exists $c_1 > 0$ which satisfies
        \begin{equation} \label{eq: 2-8}
            v_{i+1} \leq v_i + \frac{c_1}{n^2}
        \end{equation}
        with $v_1 = \Omega(1/n^2)$. For the lower bound, analogously define $w_i := \min\textrm{Var}_\sigma[S(Y_i)]$ then
        \begin{equation} \label{eq: 2-9}
            w_{t+1} \geq w_t + \frac{c_2}{n^2}
        \end{equation}
        for some $c_2 = c_2(\beta) > 0$. Equation \eqref{eq: 2-8} and Equation \eqref{eq: 2-9} finishes the proof.
    \end{proof}
    
    We can derive a general variance bound by combining Equation \eqref{eq: 2-7} with Lemma \ref{lem: variance estimate no.1}. This can be done with the lemma below.
    
    \begin{lemma} \label{lem: variance estimate no.2}
        Suppose $(Z_t)$ be a (general) Markov chain which takes values from $\mathbb{R}$. Define $\mathbb{P}_z$ and $\mathbb{E}_z$ to be probability and expectation started from $z$. Suppose there exists some $0 < \rho < 1$ such that for any initial states $(z, z')$,
        \begin{equation}
            \Big|\mathbb{E}_z[Z_t] - \mathbb{E}_{z'}[Z_t]\Big| \leq \rho^t |z-z'|.
        \end{equation}
        Then $v_t := \sup_{z_0} \normalfont \textrm{Var}_{z_0}(Z_t)$ satisfies
        \begin{equation*}
            v_t \leq v_1 \min\big\{t, (1-\rho^2)^{-1}\big\}.
        \end{equation*}
    \end{lemma}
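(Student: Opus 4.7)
The natural approach is to iterate the law of total variance conditioning on the first step of the chain and exploit the contractive hypothesis to control the variance of the conditional expectation.

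First, I would condition on $Z_1$ and apply the decomposition
\begin{equation*}
    \textrm{Var}_{z_0}(Z_t) = \mathbb{E}_{z_0}\big[\textrm{Var}(Z_t \mid Z_1)\big] + \textrm{Var}_{z_0}\big(\mathbb{E}[Z_t \mid Z_1]\big).
\end{equation*}
By the Markov property, conditional on $Z_1 = z$ the random variable $Z_t$ is distributed as $Z_{t-1}$ started from $z$, so $\mathbb{E}[Z_t \mid Z_1] = g(Z_1)$ where $g(z) := \mathbb{E}_z[Z_{t-1}]$, and $\textrm{Var}(Z_t \mid Z_1) \leq v_{t-1}$ pointwise in $Z_1$.

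Next I would use the contractive hypothesis at time $t-1$: the function $g$ is $\rho^{t-1}$-Lipschitz. The standard argument with an independent copy $Z_1'$ of $Z_1$ under $\mathbb{P}_{z_0}$ gives
\begin{equation*}
    \textrm{Var}_{z_0}\big(g(Z_1)\big) = \tfrac{1}{2}\mathbb{E}_{z_0}\big[(g(Z_1)-g(Z_1'))^2\big] \leq \rho^{2(t-1)} \cdot \tfrac{1}{2}\mathbb{E}_{z_0}\big[(Z_1-Z_1')^2\big] = \rho^{2(t-1)}\textrm{Var}_{z_0}(Z_1) \leq \rho^{2(t-1)} v_1.
\end{equation*}
Combining the two pieces yields the recursion $v_t \leq v_{t-1} + \rho^{2(t-1)} v_1$.

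Iterating the recursion from $v_1 \leq v_1$ gives the geometric sum
\begin{equation*}
    v_t \leq v_1 \sum_{s=0}^{t-1} \rho^{2s} = v_1 \cdot \frac{1-\rho^{2t}}{1-\rho^2},
\end{equation*}
and since a sum of $t$ terms each bounded by $1$ is at most $t$ while the full series is bounded by $(1-\rho^2)^{-1}$, the desired $v_t \leq v_1 \min\{t,(1-\rho^2)^{-1}\}$ follows at once. I do not anticipate a genuine obstacle here; the only slightly delicate point is recognizing that the Lipschitz estimate on the map $z \mapsto \mathbb{E}_z[Z_{t-1}]$ transfers directly into a factor $\rho^{2(t-1)}$ in front of $\textrm{Var}_{z_0}(Z_1)$ via the coupling-with-an-independent-copy identity, which is exactly where the hypothesis of the lemma is used.
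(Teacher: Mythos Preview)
Your argument is correct. The paper itself does not give a proof of this lemma; it simply cites \cite{LLP10} and omits the argument. The proof you supply---law of total variance conditioned on $Z_1$, the Markov property to bound the first term by $v_{t-1}$, the $\rho^{t-1}$-Lipschitz estimate on $z\mapsto\mathbb{E}_z[Z_{t-1}]$ combined with the independent-copy identity $\textrm{Var}(X)=\tfrac12\mathbb{E}[(X-X')^2]$ to bound the second term by $\rho^{2(t-1)}v_1$, and then summation of the resulting recursion---is exactly the standard argument that appears in the cited reference, so there is nothing to contrast.
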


    \cite{LLP10} mentioned Lemma \ref{lem: variance estimate no.2} and its proof so we omit here. Combining Lemma \ref{lem: variance estimate no.1} and Lemma \ref{lem: variance estimate no.2}, we have Var$_\sigma[S_t] = O(1/n)$ at any time $t \geq 0$ if $\beta < 1$. The last tool for cutoff in high temperature regime is the information of the number of each spins on an arbitrary vertices subset.
    
    \begin{lemma} \label{lem: partial sum estimate}
            Suppose $\beta < 1$. \\
            (i) For all $\sigma \in \Omega$ and $v \in V$,
            \begin{equation*}
                |\mathbb{E}_\sigma[S_t]| \leq 2e^{-kt(1-\beta)/n} \qquad \textrm{and} \qquad |\mathbb{E}_\sigma[X_t(v)]| \leq 2e^{-kt(1-\beta)/n}.
            \end{equation*}
            (ii) For any subset $A \subset V$, define
            \begin{equation*}
                M_t(A) := \frac{1}{2}\sum_{v\in V}X_t(v).
            \end{equation*}
            Then we have $|\mathbb{E}_\sigma[M_t(A)]| \leq |A|e^{-kt(1-\beta)/n}$. Furthermore, if $t \geq [2(1-\beta)k]^{-1}n\log n$, then $\normalfont \textrm{Var}_\sigma[M_t(A)] \leq Cn$ for some constant $C>0$. \\
            (iii) For any subset $A$ of vertices and all $\sigma \in \Omega$,
            \begin{equation*}
                \mathbb{E}_\sigma\big[|M_t(A)|\big] \leq ne^{-kt(1-\beta)/n} + O(\sqrt{n}).
            \end{equation*}
        \end{lemma}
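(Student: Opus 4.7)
The plan is to leverage Proposition~\ref{prop: magnetization contraction} together with the spin-flip and vertex symmetries of the no-field Ising model to settle (i), then bootstrap to the variance and $L^1$ bounds of (ii) and (iii).

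For the magnetization bound in (i), I apply Proposition~\ref{prop: magnetization contraction} to the antipodal pair $(\sigma,-\sigma)$: since $\textrm{dist}(\sigma,-\sigma)=n$ the inequality gives $\mathbb{E}[|S_t-\widetilde{S}_t|]\le 2\rho^t$, and combined with the spin-flip identity $\mathbb{E}_{-\sigma}[S_t]=-\mathbb{E}_\sigma[S_t]$ this yields $|\mathbb{E}_\sigma[S_t]|\le \rho^t\le e^{-kt(1-\beta)/n}$. For the single-site bound I use the monotonicity of the grand coupling: writing $\mathbf{1}$ and $-\mathbf{1}$ for the all-plus and all-minus configurations, the pointwise sandwich $-\mathbf{1}\le\sigma\le\mathbf{1}$ gives $\mathbb{E}_{-\mathbf{1}}[X_t(v)]\le\mathbb{E}_\sigma[X_t(v)]\le\mathbb{E}_{\mathbf{1}}[X_t(v)]$ under the grand coupling; by flip symmetry the two endpoints are $\pm\mathbb{E}_{\mathbf{1}}[X_t(v)]$, and by vertex-symmetry of the all-plus initial state $\mathbb{E}_{\mathbf{1}}[X_t(v)]=\mathbb{E}_{\mathbf{1}}[S_t]$, so the $S_t$ bound transfers and $|\mathbb{E}_\sigma[X_t(v)]|\le |\mathbb{E}_{\mathbf{1}}[S_t]|\le e^{-kt(1-\beta)/n}$.

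For part (ii), the expectation estimate $|\mathbb{E}_\sigma[M_t(A)]|\le |A|e^{-kt(1-\beta)/n}$ is immediate from (i) and linearity. For the variance the key point is positive association: the randomized systematic scan is a composition of attractive single-site heat-bath updates (the very monotonicity witnessed by the grand coupling), so the time-$t$ law from any deterministic $\sigma$ is FKG-positively associated, and hence $\textrm{Cov}_\sigma(M_t(A),M_t(A^c))\ge 0$. Since $M_t(A)+M_t(A^c)=\tfrac{n}{2}S_t$, this gives
\begin{equation*}
    \textrm{Var}_\sigma[M_t(A)]\le \textrm{Var}_\sigma[M_t(A)]+\textrm{Var}_\sigma[M_t(A^c)]+2\textrm{Cov}_\sigma(M_t(A),M_t(A^c))=\tfrac{n^2}{4}\textrm{Var}_\sigma[S_t].
\end{equation*}
Combining Lemma~\ref{lem: variance estimate no.1} with Lemma~\ref{lem: variance estimate no.2} (using $\rho$ from Proposition~\ref{prop: magnetization contraction}) gives $\textrm{Var}_\sigma[S_t]=O(1/n)$ as soon as $t\gtrsim n/(k(1-\beta))$, a threshold well below $t_n$; hence $\textrm{Var}_\sigma[M_t(A)]=O(n)$.

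For part (iii), the triangle inequality and Cauchy--Schwarz give
\begin{equation*}
    \mathbb{E}_\sigma[|M_t(A)|]\le |\mathbb{E}_\sigma[M_t(A)]|+\sqrt{\textrm{Var}_\sigma[M_t(A)]}\le n e^{-kt(1-\beta)/n}+O(\sqrt{n})
\end{equation*}
for $t\ge t_n$ by (ii); for smaller $t$ the claim is trivial since $|M_t(A)|\le |A|/2\le n e^{-kt(1-\beta)/n}$. The main technical hurdle I anticipate is the FKG step in (ii): one must confirm that the compound $k$-site scan kernel preserves positive association. Since each single-site heat-bath update in the ferromagnetic Ising model is monotone and attractive (as witnessed by the grand coupling), and monotonicity is closed under composition and under the uniform random choice of the update set and permutation, positive association descends from the classical single-site case, making the invocation legitimate.
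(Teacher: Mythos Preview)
Your argument for part (i) is correct and essentially parallel to the paper's: both exploit Proposition~\ref{prop: magnetization contraction}, global spin-flip symmetry, and the vertex-transitivity of $K_n$, the only cosmetic difference being that you compare $\sigma$ to $-\sigma$ while the paper compares $\mathbb{1}$ to a stationary copy.

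The genuine gap is in part (ii). You claim that the time-$t$ law of the randomized scan from a deterministic $\sigma$ is FKG--positively associated, reasoning that single-site heat-bath updates are attractive and that this ``descends'' through composition and through averaging over the random update set. Monotonicity of the kernel does survive both operations, but \emph{preservation of positive association does not survive convex mixtures}: if each $P_i$ is PA-preserving, $\mu P=\sum_i\alpha_i(\mu P_i)$ is a mixture of PA measures, and mixtures of PA measures need not be PA. Concretely, take $n=2$, $k=1$, $\sigma=(+1,+1)$ and run one step. With $p=p_+(1/2)\in(0,1)$ the resulting law has mass $p$ on $(+1,+1)$ and $(1-p)/2$ on each of $(\pm1,\mp1)$, so $\mathbb{E}[X_1(1)X_1(2)]=2p-1$ while $\mathbb{E}[X_1(1)]=\mathbb{E}[X_1(2)]=p$, giving $\mathrm{Cov}(X_1(1),X_1(2))=-(1-p)^2<0$. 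Taking $A=\{1\}$ shows $\mathrm{Cov}_\sigma(M_1(A),M_1(A^c))<0$, so your key inequality $\mathrm{Var}_\sigma[M_t(A)]\le\tfrac{n^2}{4}\mathrm{Var}_\sigma[S_t]$ is false in general. (Liggett's positive-correlations theorem is a continuous-time statement; in discrete time the random site choice leaks anti-correlating information about which coordinate was refreshed.)

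The paper avoids this entirely by using the vertex symmetry of $K_n$ at the extremal starts $\pm\mathbb{1}$: from $\mathrm{Var}_{\mathbb{1}}[S_t]=O(1/n)$ and exchangeability of the coordinates under $\mathbb{P}_{\mathbb{1}}$ one extracts $\mathbb{E}_{\mathbb{1}}[X_t(1)X_t(2)]=O(1/n)$, whence $\mathbb{E}_{\mathbb{1}}[M_t(A)^2]=O(n)$ for every $A$; the monotone sandwich $M_{-\mathbb{1},t}(A)\le M_{\sigma,t}(A)\le M_{\mathbb{1},t}(A)$ then gives $M_{\sigma,t}(A)^2\le M_{\mathbb{1},t}(A)^2+M_{-\mathbb{1},t}(A)^2$ and hence the second-moment bound for arbitrary $\sigma$. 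Your part (iii) would follow once (ii) is repaired along these lines; the paper's own (iii) likewise sidesteps the issue by coupling with a stationary copy and invoking FKG only for the Gibbs measure $\mu$, where it is classical.
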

    \begin{proof}
        (i) Denote $\mathbbm{1}$ to be the configuration of all pluses. Consider the monotone coupling $(X_t, \widetilde{X}_t)$ where $X_0 = \mathbbm{1}$ and $\widetilde{X}_0$ following the stationary distribution $\mu$. Since the distribution on $\widetilde{X}_t$ remains the same and symmetric, $\mathbb{E}_{\mu}[\widetilde{S}_t]=0$ holds. From Proposition \ref{prop: magnetization contraction},
        \begin{equation*}
            \mathbb{E}_{\mathbbm{1}}[S_t] \leq \mathbb{E}_{\mathbbm{1}, \mu}\big[|S_t - \widetilde{S}_t|\big] + \mathbb{E}_\mu[\widetilde{S}_t] \leq 2\Big(1-\frac{k(1-\beta)}{n}\Big)^t \leq 2e^{-kt(1-\beta)/n}.
        \end{equation*}
        Similarly we consider $-\mathbbm{1}$, the configuration with all minuses, then $-2e^{-kt(1-\beta)/n} \leq \mathbb{E}_{-\mathbbm{1}}[S_t]$.
        The monotonicity ensures $\mathbb{E}_{-\mathbbm{1}}[S_t] \leq \mathbb{E}_{\sigma}[S_t] \leq \mathbb{E}_{\mathbbm{1}}[S_t]$. For $\mathbb{E}_\sigma[X_t(v)]$, the symmetry of $K_n$ gives
        \begin{equation*}
            \mathbb{E}_{\mathbbm{1}}[S_t] = \mathbb{E}_{\mathbbm{1}}[X_t(v)]
        \end{equation*}
        for any $v \in V$. Appealing to the monotonicity gives the second inequality.
        \medskip
        
        (ii) The expectation argument comes from (i). Consider the monotone coupling between $\mathbbm{1}$, $-\mathbbm{1}$, and $\sigma$ again. Denote $M_{\sigma, t}(A)$ as $M_t(A)$ starts from $\sigma$ then
        \begin{equation*}
            M_{-\mathbbm{1}, t}(A) \leq M_{\sigma, t}(A) \leq M_{\mathbbm{1}, t}(A).
        \end{equation*}
        As a consequence of Lemma \ref{lem: variance estimate no.2}, we have $\textrm{Var}_\sigma[S_t] = O(1/n)$. Hence $\mathbb{E}_\mathbbm{1}[M_t([n])^2] = n^2/4[\textrm{Var}_\mathbbm{1}[S_t] + (\mathbb{E}_\mathbbm{1}S_t)^2 = O(n)$ when $t \geq [2(1-\beta)]^{-1}n\log n$. Due to the symmetry of complete graphs,
        \begin{equation*}
            \mathbb{E}\Big[M_{\mathbbm{1}, t}([n])^2\Big] = n + {n \choose 2}\mathbb{E}[X_{\mathbbm{1}, t}(1)X_{\mathbbm{1}, t}(2)].
        \end{equation*}
        Therefore $|\mathbb{E}[X_{\mathbbm{1}, t}(1)X_{\mathbbm{1}, t}(2)]| = O(1/n)$. Consider the same expansion, then
        \begin{equation*}
            \mathbb{E}M_{\mathbbm{1}, t}(A)^2 = |A| + {|A| \choose 2}\mathbb{E}[X_{\mathbbm{1}, t}(1)X_{\mathbbm{1}, t}(2)] \leq n + {n \choose 2}O(1/n) = O(n).
        \end{equation*}
        Similar argument comes out for $\mathbb{E}M_{-\mathbbm{1}, t}(A)$. From the monotonicity,
        \begin{equation} \label{eq: 2-10}
            M_{\sigma, t}(A)^2 \leq M_{-\mathbbm{1}, t}(A)^2 + M_{\mathbbm{1}, t}(A)^2.
        \end{equation}
        Now taking expectation on both sides of Equation \eqref{eq: 2-10} we have $\mathbb{E}[M_{\sigma, t}(A)^2] = O(n)$. The first expectation argument implies $|\mathbb{E}M_{\sigma, t}(A)| = O(\sqrt{n})$ when $t \geq [2(1-\beta)k]^{-1}$, so we can get the variance estimate for large time $t$.
        \medskip
        
        (iii) Again, consider monotone coupling between $X_t$ and $\widetilde{X}_t$, where $X_0 = \sigma$ and $\widetilde{X}_0$ which follows the stationary distribution $\mu$. Then
        \begin{equation*}
            \mathbb{E}[|M_t(A)|] \leq \mathbb{E}[ |M_t(A) - \widetilde{M}_t(A)| ] + \mathbb{E}[|\widetilde{M}_t(A)|].
        \end{equation*}
        Applying Cauchy-Schwartz inequality considering $|\widetilde{M}_t(A) - M_t(A)| \leq \textrm{dist}(X_t, \widetilde{X}_t)$ yields
        \begin{equation*}
            \mathbb{E}[|M_t(A)|] \leq \mathbb{E}[ \textrm{dist}(X_t, \widetilde{X}_t)] + \sqrt{\mathbb{E}[\widetilde{M}_t(A)^2]}.
        \end{equation*}
        Applying Theorem \ref{prop: hamming distance contraction} subsequently gives
        \begin{equation*}
            \mathbb{E}[|M_t(A)|] \leq n\rho^t + \sqrt{\mathbb{E}[\widetilde{M}_t(A)^2]}.
        \end{equation*}
        Since the variables $\{\widetilde{X}_t(i)\}^n_{i=1}$ are positively correlated under $\mu$,
        \begin{equation*}
            \mathbb{E}[\widetilde{M}_t(A)^2] \leq \frac{n^2}{4}\mathbb{E}[\widetilde{S}^2_t] = \frac{n^2}{4}\textrm{Var}[\widetilde{S}_t] = O(n),
        \end{equation*}
        therefore we can get
        \begin{equation*}
            \mathbb{E}[|M_t(A)|] \leq ne^{-(1-\beta)kt/n} + O(\sqrt{n}).
        \end{equation*}
    \end{proof}
    
    Lemma \ref{lem: magnetization match to exact match} claims that for arbitrary two chains, if their magnetizations agree, then there is a coupling that they can agree in $O(n \log n / k)$ time with high probability. It becomes useful for $\beta \geq 1$ case.
    
    \begin{lemma} \label{lem: magnetization match to exact match}
        For any $\sigma, \widetilde{\sigma} \in \Omega$ such that $S(\sigma) = S(\widetilde{\sigma})$, there exists a coupling for $(X_t, \widetilde{X}_t)$ with starting points $(\sigma, \widetilde{\sigma})$,
        \begin{equation*}
            \lim_{n \to \infty} \mathbb{P}_{\sigma, \widetilde{\sigma}}\left[\min_t\{t : X_t = \widetilde{X}_t\} > \frac{\gamma n \log n}{k} \right] = 0,
        \end{equation*}
        for some constant $\gamma = \gamma(\beta) > 0$.
    \end{lemma}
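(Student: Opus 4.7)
The plan is to exhibit an explicit coupling of the two chains under which the Hamming distance $D_t := \mathrm{dist}(X_t, \widetilde{X}_t)$ contracts geometrically at rate $1 - ck/n$ for some constant $c = c(\beta) > 0$, and then to apply Markov's inequality at time $t = \gamma n \log n / k$ with $\gamma > 1/c$ to conclude $\mathbb{P}[D_t \geq 1] = o(1)$, which implies the coupling time bound in the statement.

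I couple the two chains by selecting the same random vertex subset and permutation in each systematic scan. Within a scan, for each single-site update at vertex $v$ I use the identity coupling when $v$ agrees in both current intermediate configurations, and the monotone coupling when $v$ is a disagreement vertex. The key observation, which leverages the hypothesis $S(\sigma) = S(\widetilde{\sigma})$, is that as long as the current intermediate magnetizations $S(Y_i)$ and $S(\widetilde{Y}_i)$ agree, an identity-coupled update at an agreement vertex preserves both pointwise agreement at $v$ and the equality of magnetizations, whereas a monotone-coupled update at a disagreement vertex $v \in D^+ := \{u : Y_i(u) = +1, \widetilde{Y}_i(u) = -1\}$ resolves the disagreement with probability $1 - O(1/n)$ and merely swaps its type into $D^-$ with the remaining $O(1/n)$ probability, by exactly the trigonometric estimate used inside the proof of Proposition \ref{prop: hamming distance contraction}.

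Granting this, each scan picks $kD_t/n$ disagreement vertices in expectation, and the drift inequality I aim to establish is
\begin{equation*}
    \mathbb{E}\bigl[D_{t+1} \mid \mathcal{F}_t\bigr] \leq \Bigl(1 - \frac{ck}{n}\Bigr) D_t .
\end{equation*}
Iterating yields $\mathbb{E}[D_t] \leq n\bigl(1 - ck/n\bigr)^t \leq n\,e^{-ckt/n}$, so that $\mathbb{E}[D_t] = o(1)$ at $t = \gamma n \log n / k$ with $\gamma > 1/c$, and Markov's inequality completes the argument.

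The main obstacle is justifying this clean drift inequality, because $S(Y_i) = S(\widetilde{Y}_i)$ need not persist throughout a single scan: each monotone-coupled update of a disagreement vertex can push the two intermediate magnetizations apart by $O(1/n)$, and the resulting discrepancy then causes a subsequent identity-coupled agreement update to manufacture a fresh disagreement with probability of order $\beta\,|S(Y_i) - S(\widetilde{Y}_i)|$. In the parameter regimes of Theorems \ref{thm: main2}--\ref{thm: main3} one has $k = o(\sqrt{n})$, so the cumulative magnetization drift across a single scan is $O(k/n)$ and the expected number of spurious disagreements produced per scan is at most $O(k^2/n)$, which is dominated by the $\Theta(kD_t/n)$ resolved disagreements and can be absorbed as a lower-order error. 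Carefully tracking these intra-scan cascades, together with the additional conditioning required for the restricted dynamics when $\beta > 1$, is the technical core of the argument and is the reason the proof is deferred to the appendix.
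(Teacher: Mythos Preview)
Your approach updates the \emph{same} vertex $v$ in both chains at each single-site step, which—as you correctly note—breaks the equality $S(Y_i)=S(\widetilde{Y}_i)$ the moment a disagreement vertex is touched. You then try to treat the resulting cascade of spurious disagreements as a lower-order error under $k=o(\sqrt{n})$, but this is the gap: once $S_{t+1}\neq\widetilde{S}_{t+1}$ at the end of a scan, the very hypothesis that drives your contraction is gone, and you give no argument that $|S_t-\widetilde{S}_t|$ remains $o(D_t/n)$ across subsequent scans. For $\beta\geq 1$ the generic Hamming contraction of Proposition~\ref{prop: hamming distance contraction} fails, and since $|S_t-\widetilde{S}_t|$ can be as large as $2D_t/n$, the spurious-creation rate $\sim \beta k|S_t-\widetilde{S}_t|$ is of the same order as the resolution rate $\sim kD_t/n$; nothing you have written rules out that the feedback destroys contraction. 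The ``technical core'' you defer is the entire difficulty; the paper does \emph{not} defer this lemma to the appendix, and its proof is only a few lines.

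The paper sidesteps the cascade with a vertex \emph{rematching} trick you do not use. At the start of each scan it builds a bijection $\phi$ on $V$ that fixes agreement vertices and pairs each $(+1,-1)$ disagreement vertex with a $(-1,+1)$ disagreement vertex (possible exactly because $S_t=\widetilde{S}_t$ forces the two disagreement sets to have equal size). The coupling then updates vertex $v$ in $X$ and vertex $\phi(v)$ in $\widetilde{X}$. Since $X_t(v)=\widetilde{X}_t(\phi(v))$ and $S_t=\widetilde{S}_t$, the two local fields coincide, the same uniform variable produces the same new spin, and $S(Y_i)=S(\widetilde{Y}_i)$ is preserved \emph{exactly} at every intermediate step. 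With no magnetization drift there are no spurious disagreements at all: $D_t$ is a genuine non-increasing supermartingale with $\mathbb{E}[D_{t+1}\mid D_t]\leq(1-ck/n)D_t$ for every $\beta$, and Markov's inequality at $t=\gamma n\log n/k$ finishes immediately.
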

    
    \begin{proof}
        For each time $t$ before updating $k$ vertices, rematch vertices of $X_t$ and $\widetilde{X}_t$ such that if $X_t(v) = \widetilde{X}_t(v)$, then match $v$ from $X_t$ with $v$ from $\widetilde{X}_t$. If $X_t(v) \neq \widetilde{X}_t(v)$, from $S_t = \widetilde{S}_t$ we have
        \begin{equation*}
            D_t := |\{v : (X_t(v),\widetilde{X}_t(v)) = (1, -1) \}| = |\{w : (X_t(w),\widetilde{X}_t(w)) = (-1, 1) \}|,
        \end{equation*}
        therefore we can rematch vertices one to one from $\{v : (X_t(v),\widetilde{X}_t(v)) = (1, -1) \}$ to $\{w : (X_t(w),\widetilde{X}_t(w)) = (-1, 1) \}$. We consider monotone coupling on these chains, with the rematched vertices. Their magnetizations remain agreed under the coupling, and the number of vertices whose spins are not matched, $2D_t$, decreases. More precisely, $D_t$ is supermartingale, and there exists $c = c(\beta)>0$ such that
        \begin{equation*}
            \mathbb{E}_{\sigma, \widetilde{\sigma}}[D_{t+1} | D_t] \leq \Big(1 - \frac{ck}{n}\Big)D_t.
        \end{equation*}
        Therefore, combining with the Markov inequality, we get
        \begin{equation*}
            \mathbb{P}_{\sigma, \widetilde{\sigma}}\left[\min_t\{t : X_t = \widetilde{X}_t\} > \frac{\gamma n \log n }{ k} \right] \leq \mathbb{P}_{\sigma, \widetilde{\sigma}}[D_{\gamma n \log n / k} \geq 1] \leq n\exp(-c\gamma \log n).
        \end{equation*}
        Choosing proper $\gamma = \gamma(\beta)$ makes the rightmost term go to $0$ as $n \to \infty$.
    \end{proof}

    It is well known that the magnetization under the original Glauber dynamics tends to go to zero, so the absolute value of it becomes supermartingale. Similarly we need a bound to measure how much the magnetization changes by randomized systematic scan dynamics.
    
    \begin{lemma} \label{lem: apriori estimate} (Magnetization estimate)
        For any given chain $X_t$ starting from $\sigma$,
        \begin{equation} \label{eq: 2-11}
            \Big| \mathbb{E}_\sigma[S_1] - \big(1-\frac{k}{n}\big)S(\sigma) - \frac{k}{n}\tanh \beta S(\sigma) \Big| \leq \frac{2k}{n}\tanh\beta\frac{2k}{n} = O\Big(\frac{k^2}{n^2}\Big).
        \end{equation}
    \end{lemma}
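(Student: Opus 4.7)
The plan is to track $\mathbb{E}_\sigma[S_1]-S(\sigma)$ through the $k$ single-site sub-updates $Y_0=\sigma,Y_1,\ldots,Y_k=X_1$ and compare each conditional one-step change to the ``frozen'' value it would take if the current magnetization were exactly $S(\sigma)$. Using the update rule \eqref{eq: 2-1}, conditional on $Y_i$ and on the $(i+1)$-st update site $v_{i+1}$ (uniform over $V\setminus\mathcal{N}_i$), the one-step expected change is
\begin{equation*}
\mathbb{E}\bigl[S(Y_{i+1})-S(Y_i)\mid Y_i,v_{i+1}\bigr]=\tfrac{1}{n}\bigl[\tanh\beta\bigl(S(Y_i)-Y_i(v_{i+1})/n\bigr)-Y_i(v_{i+1})\bigr].
\end{equation*}
I would split the right-hand side into a main piece $\tfrac{1}{n}\bigl[\tanh\beta S(\sigma)-Y_i(v_{i+1})\bigr]$, designed to reproduce the claimed leading term, and an error piece $\tfrac{1}{n}\bigl[\tanh\beta(S(Y_i)-Y_i(v_{i+1})/n)-\tanh\beta S(\sigma)\bigr]$ that absorbs the discrepancy.

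The main piece collapses cleanly after summation. Since $v_{i+1}\notin\mathcal{N}_i$, we have $Y_i(v_{i+1})=\sigma(v_{i+1})$, and since the ordered update sites $(v_1,\ldots,v_k)$ form a uniform ordered $k$-subset of $V$, each marginal $v_j$ is uniform on $V$, so $\mathbb{E}_\sigma[\sigma(v_j)]=S(\sigma)$. Summing,
\begin{equation*}
\sum_{i=0}^{k-1}\mathbb{E}_\sigma\!\left[\tfrac{\tanh\beta S(\sigma)-\sigma(v_{i+1})}{n}\right]=\tfrac{k}{n}\tanh\beta S(\sigma)-\tfrac{k}{n}S(\sigma),
\end{equation*}
which, added to the initial $S(\sigma)$, yields exactly the leading term $(1-k/n)S(\sigma)+(k/n)\tanh\beta S(\sigma)$ appearing in \eqref{eq: 2-11}.

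For the error pieces I would use the deterministic a priori bound $|S(Y_i)-S(\sigma)|\leq 2i/n\leq 2k/n$, which holds because each intermediate update shifts $S$ by at most $2/n$. Combined with the $1/n$ term inside the $\tanh$, the argument difference satisfies $|S(Y_i)-Y_i(v_{i+1})/n-S(\sigma)|\leq 2k/n$. Applying monotonicity of $\tanh$ together with the estimate $|\tanh\beta a-\tanh\beta b|\leq\tanh(\beta|a-b|)$ in this small-increment regime (or, equivalently, the Lipschitz bound $\leq\beta|a-b|$ followed by $\beta|a-b|\leq\beta\cdot 2k/n$) gives $|\text{error}_i|\leq\tfrac{1}{n}\tanh\beta(2k/n)$; summing over $i=0,\ldots,k-1$ bounds the total deviation by $\tfrac{k}{n}\tanh\beta(2k/n)$, which the stated $\tfrac{2k}{n}\tanh\beta\tfrac{2k}{n}$ accommodates. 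Using $\tanh x\leq x$ turns this into the stated $O(k^2/n^2)$.

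The main obstacle I anticipate is bookkeeping the dependence between $v_{i+1}$, $\mathcal{N}_i$, and $Y_i$: one must view $(v_1,\ldots,v_k)$ as a globally uniform ordered sample rather than conditioning on $\mathcal{N}_i$ sequentially, so that the marginal identity $\mathbb{E}_\sigma[\sigma(v_{i+1})]=S(\sigma)$ can be invoked to make the main-piece sum collapse; and one must ensure that the $\beta$-Lipschitz slack does not compound across the $k$ sub-steps, which is precisely what the single a priori bound $|S(Y_i)-S(\sigma)|\leq 2k/n$ prevents. Everything else is elementary.
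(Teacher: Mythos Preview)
Your approach is essentially the same as the paper's: split $n\mathbb{E}_\sigma[S_1]$ into the contribution of the $n-k$ untouched sites (expected sum $(n-k)S(\sigma)$ by uniformity of the ordered sample) and the $k$ refreshed sites (each with conditional mean $\tanh\beta(S(Y_i)\pm 1/n)$), then control the latter via the a priori bound $|S(Y_i)-S(\sigma)|\le 2i/n$. Your telescoping-increment formulation and the paper's ``unchanged part plus updated part'' formulation are two ways of writing the same computation.

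One small correction: the inequality $|\tanh\beta a-\tanh\beta b|\le\tanh(\beta|a-b|)$ is false in general (take $a=-b$, where the left side is $2\tanh\beta|b|>\tanh(2\beta|b|)$). The paper instead uses the valid bound $|\tanh x-\tanh y|\le 2\tanh\tfrac{|x-y|}{2}$, which with $|a-b|\le 2k/n$ gives exactly the stated $\tfrac{2k}{n}\tanh\beta\tfrac{2k}{n}$ after summing. Your Lipschitz fallback $|\tanh\beta a-\tanh\beta b|\le\beta|a-b|$ is fine for the $O(k^2/n^2)$ conclusion, but to recover the precise $\tanh$ form in \eqref{eq: 2-11} you need the correct inequality.
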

        
    \begin{proof}
        Consider the intermediate states $Y_0 = \sigma, ..., Y_k = X_1$. For any vertex selected to be updated, the probability that the new spin becomes $1$ is
        \begin{equation*}
            p_+\Big(S(Y_i) \pm \frac{1}{n}\Big) = \frac{1 + \tanh \beta (S(Y_i) \pm 1/n)}{2},
        \end{equation*}
        thus the expectation of the updated spin of the vertex is
        \begin{equation*}
            p_+\Big(S(Y_i) \pm \frac{1}{n}\Big) - p_-\Big(S(Y_i) \pm \frac{1}{n}\Big) = \tanh \beta \Big(S(Y_i) \pm \frac{1}{n}\Big)
        \end{equation*}
        and we know $|S(Y_i) - S_0| \leq \frac{2i}{n}$.
        
        Since $k$ vertices are chosen uniformly randomly among $n$ vertices at $t=0$, the expected sum of spins of vertices which are not chosen to be updated is $(n-k)S_0$. Therefore
        \begin{equation*}
            \begin{split}
                &(n-k)S_0 + k \tanh \beta \Big(S_0 - \frac{1}{n} - \frac{2(k-1)}{n}\Big) \\
                &\qquad \qquad \qquad \leq n\mathbb{E}_\sigma[S_1] \leq (n-k)S_0 + k \tanh \beta \Big(S_0 + \frac{1}{n} + \frac{2(k-1)}{n}\Big),
            \end{split}
        \end{equation*}
        which becomes Equation \eqref{eq: 2-11} by appealing $|\tanh x - \tanh y| \leq 2\tanh \left|(x-y)/2\right|$.
    \end{proof}

    Sometimes we need to investigate the magnetization not only for $(X_t)$ but for all its intermediates $(Y_i)$. Since $|S(Y_{i+1}) - S(Y_i)| \leq 2/n$, we can get an estimate for the time $\tau_\textrm{mag}:= \min\{i \geq 0 : S(Y_i) = S(\widetilde{Y}_i)\}$. However, due to unavailable vertices, we are not sure whether we can manipulate the coupling so that those two chains' magnetization remain equal. The following lemma partially gives an answer for such a case.

    \begin{lemma} \label{lem: crossing is almost coupling}
        Assume $k=o(\sqrt{n})$. Suppose we have two chains $X_t$ and $\widetilde{X}_t$(not necessarily independent) such that $S_0 > \widetilde{S}_0$. Define their intermediate states $\{Y_i\}$ and $\{\widetilde{Y}_i\}$ such that $(Y_{kt}, \widetilde{Y}_{kt}) = (X_t, \widetilde{X}_t)$ for all $t\geq 0$. Consider $\tau_\textrm{almost}:= \min \{i \geq 0: |S(Y_i)-S(\widetilde{Y_i})|\leq 2/n\}$ and $\tau_\textrm{exact}:= \min \{i \geq 0: S(Y_i)=S(\widetilde{Y_i})\}$ under any given coupling. Then,
        \begin{enumerate}[leftmargin=*]
        \item There exists a coupling for intermediates from $\tau_\textrm{exact}$ to $T := \lceil\tau_\textrm{exact}/k\rceil$ which ensures $S_T = \widetilde{S}_T$ with high probability.
        \item There exists a coupling for intermediates from $\tau_\textrm{almost}$ to $T' := \lceil\tau_\textrm{almost}/k\rceil$ which ensures that $|S_{T'} - \widetilde{S}_{T'}| \leq 4k/n$ with high probability, for large enough $n$.
        \end{enumerate}
    \end{lemma}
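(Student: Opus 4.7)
The plan is to design, for each part, a coupling on the at most $k-1$ remaining intermediate updates of the current round that controls the magnetization gap $\Delta := S - \widetilde{S}$.

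Part (2) follows from an essentially deterministic bound. Under any valid single-site coupling, the increment $\Delta_{i+1} - \Delta_i$ lies in $\{0, \pm 2/n, \pm 4/n\}$, and the extreme values $\pm 4/n$ can only occur when the updated vertex is a disagreement vertex and the two chains flip it in opposite directions. Iterating the trivial bound $|\Delta_{i+1}| \leq |\Delta_i| + 4/n$ from $|\Delta_{\tau_\textrm{almost}}| \leq 2/n$ over the at most $k-1$ remaining intermediate steps yields
\[
|S_{T'} - \widetilde{S}_{T'}| \leq \frac{2}{n} + 4 \cdot \frac{k-1}{n} < \frac{4k}{n},
\]
regardless of the coupling used; the natural maximal single-site coupling will suffice.

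For part (1), I will adapt the rematching coupling from the proof of Lemma \ref{lem: magnetization match to exact match}. Since $S = \widetilde{S}$ at $\tau_\textrm{exact}$, the disagreement vertices partition into equal-size classes of types $(+1,-1)$ and $(-1,+1)$. I will construct a bijection $\phi : V \to V$ that pairs vertices of opposite disagreement types and fixes every agreement vertex. In the new coupling, whenever the scan calls for updating vertex $v$ in $X$, I will drive the corresponding update in $\widetilde{X}$ at $\phi(v)$ using a shared Bernoulli coin. Because $\widetilde{X}(\phi(v)) = X(v)$ and $S = \widetilde{S}$, the two update probabilities $p_+(\cdot)$ coincide exactly, the shared coin produces identical new spins, and $\Delta = 0$ is preserved after each step, so $S_T = \widetilde{S}_T$ at the end of the round.

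The hard part will be verifying that this modification constitutes a valid systematic-scan coupling for $\widetilde{X}$: the relabeled remaining scan $\phi(\{v_{r+1},\ldots,v_k\})$, where $r = \tau_\textrm{exact} \bmod k$, must be disjoint from the already-updated set $\{v_1,\ldots,v_r\}$ so that $\widetilde{X}$'s full round-scan remains a uniform $k$-subset. I will try to choose $\phi$ preserving $\{v_1,\ldots,v_r\}$ as a set, which is possible whenever the two disagreement types are balanced within that set; otherwise a short fallback procedure reassigns partners outside with a small loss. A concentration estimate using $k = o(\sqrt{n})$ and the global balance of disagreement types at $\tau_\textrm{exact}$ bounds the probability of an obstructing configuration by $o(1)$, giving $S_T = \widetilde{S}_T$ with high probability.
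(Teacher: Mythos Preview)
Your part (ii) is correct and actually simpler than the paper's argument. The paper rematches by spin and runs a monotone coupling to obtain the stronger bound $|S_{T'}-\widetilde S_{T'}|\le 4/n$ with probability $1-O(k^2/n)$; you instead observe that under \emph{any} coupling each intermediate step changes $\Delta$ by at most $4/n$, so the trivial telescoping over at most $k-1$ steps already gives $|S_{T'}-\widetilde S_{T'}|<4k/n$ surely. That suffices for the stated claim and avoids the rematching entirely.

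For part (i) you have the same core mechanism as the paper---rematch vertices by spin to obtain a bijection $\phi$ with $Y_{\tau_\mathrm{exact}}(v)=\widetilde Y_{\tau_\mathrm{exact}}(\phi(v))$, then drive $\widetilde X$'s remaining updates through $\phi$ with a shared coin---but your handling of the validity constraint is both more complicated and has gaps. First, you assume that $\widetilde X$'s already-updated set at $\tau_\mathrm{exact}$ equals $\{v_1,\dots,v_r\}$; the lemma is stated ``under any given coupling,'' so in general $\widetilde{\mathcal N}_r\neq\mathcal N_r$ and the relevant constraint is $\phi(v_{j})\notin\widetilde{\mathcal N}_{j-1}$, not disjointness from $\{v_1,\dots,v_r\}$. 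Second, trying to pre-select $\phi$ to preserve $\mathcal N_r$ requires the $(+,-)$ and $(-,+)$ counts inside $\mathcal N_r$ to match exactly; this is a specific event at a random stopping time, and there is no concentration estimate that forces it to hold w.h.p.---indeed, for a set of size $r$ one expects fluctuations of order $\sqrt r$, so exact balance has probability $o(1)$, not $1-o(1)$.

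The paper sidesteps all of this: it rematches by spin with no constraint on $\phi$, attempts the coupling step by step, and simply aborts to independent updates if $\phi(v_j)$ lands in $\widetilde{\mathcal N}_{j-1}$. Since $|\widetilde{\mathcal N}_{j-1}|\le k$ and there are $n-j\ge n-k$ choices for $v_j$, the per-step survival probability is at least $1-k/(n-k)$, and over at most $k$ remaining steps the total survival probability is at least $(1-k/(n-k))^k\sim e^{-k^2/(n-k)}\to 1$. This direct collision bound is the whole argument; replace your concentration sketch with it.
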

    
    \begin{remark*}
        Both $\tau_\textrm{almost}$ and $\tau_\textrm{exact}$ are defined on intermediates. Lemma \ref{lem: crossing is almost coupling} implies that if the two magnetizations on intermediates are matched, magnetizations on the original chains can be matched soon with high probability. For the case (ii), Lemma \ref{prop: magnetization contraction} and Markov inequality ensures the magnetization matching after $O(n/k)$ times.
    \end{remark*}
                        
    \begin{proof}
        (i) Let $l := \tau_\textrm{exact} \textrm{ (mod k)}$. At time $i = \tau_\textrm{exact}$, rematch vertices according to their current spins without considering its update history. For the remaining $(k-l)$ updates, apply monotone coupling for $(Y_i, \widetilde{Y}_i)$: randomly choose one of the available vertices from $Y_i$ and check the corresponding vertices of $\widetilde{Y}_i$. If that vertex from $\widetilde{Y}_i$ is an unavailable vertex, stop monotone coupling and run two Markov chains independently. If not, update two corresponding vertices under rematched monotone coupling.
            
        This coupling preserves the magnetization matching unless it choose unavailable vertices during $(k-l)$ updates. In the worst case, the probability that the monotone coupling does not stop at $i = j$ is $(n-2j)/(n-j) = 1 - j/(n-j)$. After it survives, the probability that the coupling does not stop at $i = (j+1)$ is $(n-2j-1)/(n-j-1) = 1 - j/(n-j-1)$. To summarize, the probability that the coupling does not stop until $i = k$ is greater than or equal to
        \begin{equation*}
            \Big(1 - \frac{l}{n-l}\Big)\Big(1 - \frac{l}{n-l-1}\Big)...\Big(1 - \frac{l}{n-k}\Big).
        \end{equation*}
        This is greater than
        \begin{equation}
            \Big(1 - \frac{k}{n-k}\Big)^{k} \sim \exp\left(-\frac{k^2}{n-k}\right) \to 1.
        \end{equation}
        Since $k = o(\sqrt{n})$, there is a coupling that ensures $\mathbb{P}[S_T = \widetilde{S}_T] \to 1$ as $n \to \infty$. \\
        (ii) In case of $\tau_\textrm{almost} = \tau_\textrm{exact}$, there is nothing to prove. Without loss of generality, suppose $S(Y_{\tau_\textrm{almost}}) - S(\widetilde{Y}_{\tau_\textrm{almost}}) = 2/n$. Rematch vertices so that $Y_{\tau_\textrm{almost}}$ and $\widetilde{Y}_{\tau_\textrm{almost}}$ are monotone. Apply monotone coupling from $\tau_\textrm{almost}$ to $kT'$ in the same way as (i): stop the coupling and update independently when an unavailable vertex is chosen to be updated. Monotone coupling succeeds to time $kT'$ with probability $1 - O(k^2/n)$. Only one vertex is mismatched at time $\tau_\textrm{almost}$ so an analogous argument from Proposition \ref{prop: hamming distance contraction} ensures that
        \begin{equation*}
            |S_{T'} - \widetilde{S}_{T'}| \leq \frac{2}{n}\left(1 + \frac{\beta}{n}\right)^{k-l} \leq \frac{4}{n}.
        \end{equation*}
        for large enough $n$.
    \end{proof}

\section{Main results: high temperature regime}
    In this section, we prove Theorem \ref{thm: main1}. Assume $\beta < 1$ throughout this section.
    
\subsection{Mixing time upper bound for $\beta < 1$}
    Here we set the exact upper bound statement for the mixing time first.
    \begin{theorem} \label{thm: main1 upper bound}
        Suppose \textbf{$k=o(\sqrt[3]{n})$}. Then
        \begin{equation}
            \lim_{\gamma \to \infty} \limsup_{n\to\infty} d_n \Big(\frac{n\log n}{2k(1-\beta)} +  \frac{\gamma n}{k} \Big) = 0.
        \end{equation}
    \end{theorem}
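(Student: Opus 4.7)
The plan is to bound $d_n(t) \leq \max_\sigma \|P_\sigma^t - \pi\|_{TV}$ by constructing a coupling $(X_t, \widetilde{X}_t)$ with $X_0 = \sigma$ arbitrary and $\widetilde{X}_0 \sim \pi$, and estimating $\mathbb{P}[X_t \neq \widetilde{X}_t]$. I will split the target time $t_1 + \gamma n/k$, with $t_1 := n\log n/[2k(1-\beta)]$, into two phases of complementary character.

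In Phase 1 of length $t_1$ I would run the monotone coupling of Proposition~\ref{prop: hamming distance contraction}. Since $\rho^{t_1} \leq e^{-(\log n)/2} = n^{-1/2}$, this gives $\mathbb{E}[\textrm{dist}(X_{t_1}, \widetilde{X}_{t_1})] \leq \sqrt{n}$ and hence $\mathbb{E}[|S_{t_1} - \widetilde{S}_{t_1}|] = O(n^{-1/2})$. Combining with Lemma~\ref{lem: partial sum estimate}(i) for the bias and the variance bound $\textrm{Var}_\sigma[S_{t_1}] = O(1/n)$ that follows from Lemmas~\ref{lem: variance estimate no.1} and \ref{lem: variance estimate no.2}, a Chebyshev inequality would yield: for each $\epsilon > 0$ there exists $M_\epsilon$ so that with probability at least $1-\epsilon$, both $|S_{t_1}|$ and $|\widetilde{S}_{t_1}|$ are bounded by $M_\epsilon/\sqrt{n}$ and $\textrm{dist}(X_{t_1}, \widetilde{X}_{t_1}) \leq M_\epsilon \sqrt{n}$.

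In Phase 2 of length $\gamma n/k$ I would condition on the Phase 1 good event. After rematching vertices of $(X_{t_1}, \widetilde{X}_{t_1})$ to make the pair monotone, I would run the rematched monotone coupling from the proof of Lemma~\ref{lem: magnetization match to exact match}. Under this, each remaining $(+,-)$ disagreement pair is resolved per scan with probability $\Theta(k/n)$, so starting from the $O(\sqrt{n})$ pairs remaining after Phase 1, the expected rate of coalescence is $\Theta(k/\sqrt{n})$ pairs per scan. Together with the supermartingale structure of the magnetization gap $D_t = S_t - \widetilde{S}_t$ (from \eqref{eq: 2-7}) and the hitting-time bound of Lemma~\ref{lem: supermartingale lemma} applied to a suitably scaled version of the Hamming distance, I expect to obtain $\mathbb{P}[\tau_{\textrm{couple}} > t_1 + \gamma n/k \mid \textrm{Phase 1 good}] = O(\gamma^{-1/2}) \to 0$ as $\gamma \to \infty$, where $\tau_{\textrm{couple}}$ is the first time $X_t = \widetilde{X}_t$. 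Combining with the $\epsilon$ from Phase 1 and letting $\gamma \to \infty$ then $\epsilon \to 0$ will complete the proof.

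The principal technical obstacle will be the bookkeeping of intermediate-state errors. The scan updates $k$ vertices sequentially, producing per-scan drift errors of order $k^2/n^2$ (Lemma~\ref{lem: apriori estimate}) and temporarily unavailable vertices that complicate the rematching step (Lemma~\ref{lem: crossing is almost coupling}). Cumulatively over $t_1$ scans, these errors must remain sub-dominant against the $O(n^{-1/2})$ scale targeted by Chebyshev in Phase 1 and against the $\Theta(k/n)$ per-pair coalescence rate in Phase 2; together they force the hypothesis $k = o(n^{1/3})$. Reconciling the rematching step at the Phase 1/Phase 2 interface with the unavailable-vertex structure, while preserving both the supermartingale drift and the variance lower bound needed for Lemma~\ref{lem: supermartingale lemma}, will be the most delicate part of the argument.
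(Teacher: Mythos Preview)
Your Phase~1 is correct and essentially what the paper does. The gap is in Phase~2. After time $t_1$ you have $\textrm{dist}(X_{t_1},\widetilde{X}_{t_1}) = O(\sqrt{n})$, and you propose to eliminate this in $\gamma n/k$ further scans via the rematched coupling of Lemma~\ref{lem: magnetization match to exact match} together with Lemma~\ref{lem: supermartingale lemma}. This does not work. Under that coupling the disagreement count $D_t$ satisfies $\mathbb{E}[D_{t+1}\mid D_t]\le (1-ck/n)D_t$, a \emph{multiplicative} contraction; starting from $D\asymp\sqrt{n}$ you need another $\Theta((n/k)\log n)$ scans, not $\gamma n/k$, to drive $D_t$ to zero. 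Lemma~\ref{lem: supermartingale lemma} cannot rescue this: the conditional variance of $D_{t+1}-D_t$ is $O(kD_t/n)$, which is not bounded below uniformly, so the diffusive hitting-time bound you invoke is unavailable. The same obstruction applies if you instead track the magnetization gap under monotone coupling, since then Hamming distance equals $(n/2)|S_t-\widetilde S_t|$ exactly.

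The missing ingredient is the \emph{two-coordinate chain} $(U_t,V_t)$ relative to a reference configuration $\sigma_0$ (Definition after Lemma~\ref{lem: good starting points}, together with Lemma~\ref{lem: distance isometry}). After magnetizations are matched exactly (Theorem~\ref{thm: magnetization matching phase}), the residual discrepancy is encoded by $R_i=U(\widetilde Y_i)-U(Y_i)$, which is still $O(\sqrt n)$ (Lemma~\ref{lem: two-coordinate phase starting point}). The crucial point of the coupling in Lemma~\ref{lem: two-coordinate chain closes} is that $R_i$ is a supermartingale with $\mathbb{P}[R_{i+1}\neq R_i]\ge c>0$ uniformly, i.e.\ variance $\Theta(1)$ per intermediate step regardless of the current value of $R_i$. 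This is what makes Lemma~\ref{lem: supermartingale lemma} give $\tau_k$ in $O(n/k)$ scans. Finally, the assumption $k=o(n^{1/3})$ enters not through accumulated drift errors as you suggest (those only force $k=o(\sqrt n)$ via Lemma~\ref{lem: crossing is almost coupling}), but in the last step $R\le k \to R=0$ of Lemma~\ref{lem: tau_k to tau_match}.
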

    
    The proof of Theorem \ref{thm: main1 upper bound} consists of two parts: for any given two configuration chains we couple their magnetizations in the first part(magnetization matching phase), then set up another coupling so that their spins exactly match with high probability(two-coordinate chain phase). Each parts consists of two small steps, since two chains often cross each other during $k$ updates. For any two chains $X_t$ and $\widetilde{X}_t$,
    \begin{enumerate}[leftmargin=*]
        \item Update $X_t$ and $\widetilde{X}_t$ under the grand coupling. In $\frac{n \log n}{2k(1-\beta)}$ time, $|S(X_t) - S(\tilde{X}_t)| \leq \frac{4k}{n}$ holds with high probability.
        \item Rematch vertices of $X_t$ and $\widetilde{X}_t$ and run two chains independently. After additional $\gamma_1 n/k$ time, $S(X_t) = S(\tilde{X}_t)$ holds with high probability.
        \item Setup a coupling so that $S(X_t) = S(\widetilde{X}_t)$ remains true. Define two-coordinate chain from $X_t$ and $\widetilde{X}_t$ (call them $U^X_t$ and $U^{\tilde{X}}_t$). After additional $\gamma_2 n/k$ time, $|U^X_t - U^{\tilde{X}}_t| \leq k$ holds with high probability.
        \item Setup another coupling for $U^X_t$ and $U^{\widetilde{X}}_t$. After additional $n/k$ time, $U^X_t = U^{\widetilde{X}}_t$ holds with high probability, and this becomes equivalent to $X_t = \widetilde{X}_t$.
    \end{enumerate}
    In particular, (1), (2), and (3) requires the restriction $k = o(\sqrt{n})$. $k = o(\sqrt[3]{n})$ is necessary only for (4).
                
    \begin{theorem} (magnetization matching phase) \label{thm: magnetization matching phase} \\
        Suppose $k = o(\sqrt{n})$. For any two configurations $\sigma$ and $\widetilde{\sigma}$, there exists a coupling $(X_t, \widetilde{X}_t)$ starting with $X_0 = \sigma$ and $\widetilde{X}_0 = \widetilde{\sigma}$, which satisfies $S_{t^*} = \widetilde{S}_{t^*}$ with probability $1 - O(\gamma^{-1/2})$, where $t^* = [2k(1-\beta)]^{-1}n \log n + \gamma n/k$.
        \end{theorem}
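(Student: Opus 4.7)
The argument follows the two-substep outline stated right before the theorem.

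\textit{Substep 1: reducing to $|S-\widetilde{S}|\le 4k/n$.}
Run the grand coupling $(X_t,\widetilde{X}_t)$ started from $(\sigma,\widetilde{\sigma})$ for $t_1=n\log n/(2k(1-\beta))$ original-chain steps and track, at the intermediate scale, the Hamming distance $W_i=\mathrm{dist}(Y_i,\widetilde{Y}_i)$. Under the monotone coupling $W_i$ is a non-negative integer supermartingale with per-intermediate-step drift of order $-(1-\beta)W_i/n$ and variance of order $W_i/n$, and Proposition~\ref{prop: magnetization contraction} gives $\mathbb{E}\,W_{kt_1}=O(\sqrt{n})$. Exploiting the subcritical-branching-like structure of $W_i$ together with the extra budget $\gamma n/k$, one shows the intermediate stopping time
$\tau_{\mathrm{almost}}=\min\{i:|S(Y_i)-S(\widetilde{Y}_i)|\le 2/n\}$
occurs with probability $1-O(\gamma^{-1/2})$. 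Once $\tau_{\mathrm{almost}}$ is reached, Lemma~\ref{lem: crossing is almost coupling}(ii) supplies a coupling from $\tau_{\mathrm{almost}}$ through the end of its original step so that $|S_{T'}-\widetilde{S}_{T'}|\le 4k/n$ at $T'=\lceil\tau_{\mathrm{almost}}/k\rceil$.

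\textit{Substep 2: driving $S=\widetilde{S}$.}
At time $T'$, rematch the vertex labels of $X_{T'}$ and $\widetilde{X}_{T'}$ so that agreeing-spin vertices are paired first, and then evolve the two chains independently for the remaining $\gamma n/k$ original-chain steps. Under the independent coupling, Lemma~\ref{lem: variance estimate no.1} gives each chain's per-original-step magnetization variance of order $k/n^2$, so the signed integer difference $W_t=n(S_t-\widetilde{S}_t)/2$ has jumps bounded by $k$, per-step variance $\sigma^2=\Theta(k)$, and drift of order $-(1-\beta)kW_t/n$ toward $0$. A hitting-time estimate for $|W_t|$ in the spirit of Lemma~\ref{lem: supermartingale lemma} then yields
\begin{equation*}
\mathbb{P}\bigl[\tau_0>T'+\gamma n/k\bigr]\;\le\;\frac{O(k)}{\sqrt{\Theta(k)\cdot(\gamma n/k)}}\;=\;O\!\left(\frac{k}{\sqrt{\gamma n}}\right)\;=\;o(\gamma^{-1/2}),
\end{equation*}
using $k=o(\sqrt{n})$, where $\tau_0=\min\{t\ge T':S_t=\widetilde{S}_t\}$. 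Once $\tau_0$ is reached, the coupling is switched to a magnetization-preserving one (in the spirit of the construction in the proof of Lemma~\ref{lem: crossing is almost coupling}(i)) to maintain $S_t=\widetilde{S}_t$ up to time $t^*$. A union bound over the two substeps yields $\mathbb{P}[S_{t^*}=\widetilde{S}_{t^*}]\ge 1-O(\gamma^{-1/2})$.

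\textit{Main obstacle.}
The hitting-time estimate for $\tau_{\mathrm{almost}}$ in Substep 1 is the most delicate point: a plain Markov application to $\mathbb{E}\,W_{kt_1}=O(\sqrt{n})$ only gives $\mathbb{P}[W_{kt_1}\ge 1]\le O(\sqrt{n})$, which is vacuous. A finer analysis is needed---either a comparison with a subcritical Galton--Watson process whose extinction probability approaches $1$ on the time scale $kt_1$ (so that the surviving mass of $W_i$ is rare and of order $n$, matching the mean $\sqrt{n}$), or a careful splitting of the budget between the two substeps so that the additional monotone-coupling contraction over part of the $\gamma n/k$ reserve brings $W$ down enough for $\tau_{\mathrm{almost}}$ to become typical. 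Making this estimate quantitative with the correct $O(\gamma^{-1/2})$ failure probability is the crux of the proof.
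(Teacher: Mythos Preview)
Your plan is in the right spirit, but the ``main obstacle'' you flag is self-inflicted and dissolves once you aim for the correct intermediate target. The paper does \emph{not} try to drive the Hamming distance $W_i$ down to $\leq 1$ (i.e., reach $\tau_{\mathrm{almost}}$) directly from time $t_1$. Instead it applies Lemma~\ref{lem: supermartingale lemma} to the magnetization difference $|S_t-\widetilde S_t|$ itself, at the original-chain scale: Proposition~\ref{prop: magnetization contraction} gives $\mathbb{E}_{\sigma,\widetilde\sigma}|S_{t(0)}-\widetilde S_{t(0)}|=O(n^{-1/2})$, and under monotone coupling $S_t-\widetilde S_t\ge 0$ is a non-negative supermartingale with per-step variance $\Theta(k/n^2)$ (Lemma~\ref{lem: variance estimate no.1}) and jumps $\leq 2k/n$. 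Conditioning on the state at $t(0)$, Lemma~\ref{lem: supermartingale lemma} gives
\[
\mathbb{P}\bigl[\tau_{\mathrm{pre}}>t(\gamma)\,\big|\,X_{t(0)},\widetilde X_{t(0)}\bigr]\;\le\;\frac{c\,|S_{t(0)}-\widetilde S_{t(0)}|}{\sqrt{k/n^2}\cdot\sqrt{\gamma n/k}}\;=\;\frac{c\sqrt{n}\,|S_{t(0)}-\widetilde S_{t(0)}|}{\sqrt{\gamma}},
\]
where $\tau_{\mathrm{pre}}=\min\{t\ge t(0):|S_t-\widetilde S_t|\le 4k/n\}$; taking expectations yields $O(\gamma^{-1/2})$ immediately. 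No branching-process comparison or budget-splitting is needed. The point is that the natural first target is $4k/n$---twice the maximal per-step jump, so the supermartingale cannot skip over it---not $2/n$.

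Your Substep~2 also has a genuine gap. Under independent evolution at the original-chain scale, $W_t=\tfrac{n}{2}(S_t-\widetilde S_t)$ has jumps of size up to $2k$ (not $k$), and Lemma~\ref{lem: supermartingale lemma} controls only the time until $W_t\le 0$, not $W_t=0$ exactly; the chain can overshoot and never land on $0$. The paper closes this in two further stages: from $\tau_{\mathrm{pre}}$ it runs the chains independently and works at the \emph{intermediate} scale (where increments of $S(Y_i)-S(\widetilde Y_i)$ lie in $\{-4/n,\ldots,4/n\}$, so the set $\{|S(Y_i)-S(\widetilde Y_i)|\le 2/n\}$ cannot be jumped over) to reach $\tau_{\mathrm{almost}}$, invokes Lemma~\ref{lem: crossing is almost coupling} to land at an original-chain time $T$ with $|S_T-\widetilde S_T|\le 4/n$, and only then---with at most two disagreeing vertices---rematches monotonically and uses the Hamming-distance contraction of Proposition~\ref{prop: hamming distance contraction} together with Markov's inequality on the integer $\tfrac{n}{2}|S-\widetilde S|$ to force exact equality in a further $O(\gamma n/k)$ steps.
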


    \begin{proof}
        For convenience, define $t(\gamma) := [2k(1-\beta)]^{-1}n\log n + \gamma n/k$. For any given configuration $\sigma$ and $\widetilde{\sigma}$ consider the monotone coupling $(X_t, \widetilde{X}_t)$ with $X_0 = \sigma$ and $\widetilde{X}_0 = \widetilde{\sigma}$. From Proposition \ref{prop: magnetization contraction},
        \begin{equation} \label{eq: 3-1}
            \mathbb{E}_{\sigma, \widetilde{\sigma}}\left[|S_{t(0)} - \widetilde{S}_{t(0)}|\right] \leq c_1 n^{-1/2}
        \end{equation}
        holds for some $c_1 > 0$. Without loss of generality assume $S_{t(0)} \geq \widetilde{S}_{t(0)}$ and define a stopping time
        \begin{equation*}
            \tau_\textrm{pre} := \min\{t \geq t(0) : |S_t - \widetilde{S}_t| \leq 4k/n \}
        \end{equation*}
        If $\tau_\textrm{pre} \leq t(\gamma)$, from time $\tau_\textrm{pre}$ rematch vertices according to their spins and update together under the monotone coupling. Otherwise run two chains $(X_t)$ and $(\widetilde{X}_t)$ under the monotone coupling until $t(\gamma)$ and run independently for $t(\gamma) < t \leq \tau_\textrm{pre}$. 
        Since $S_t \geq \widetilde{S}_t$ for $t \leq \tau_\textrm{pre}$, the process $(S_t - \widetilde{S}_t)_{t(\gamma) \leq t < \tau_\textrm{pre}}$ has a non-positive drift and is non-negative. $t < \tau_\textrm{pre}$ ensures $S_{t+1} - \widetilde{S}_{t+1} > 0$ so Lemma \ref{lem: supermartingale lemma} and Lemma \ref{lem: variance estimate no.1} can be applied, provided that $k = o(\sqrt{n})$,
        \begin{equation*}
            \mathbb{P}_{\sigma, \widetilde{\sigma}}[\tau_\textrm{pre} > t(\gamma)| X_{t(0)}, \widetilde{X}_{t(0)}] \leq \frac{c|S_{t(0)} - \widetilde{S}_{t(0)}|}{\frac{\sqrt{k}}{n}\sqrt{\frac{\gamma n}{k}}} = \frac{c\sqrt{n}|S_{t(0)} - \widetilde{S}_{t(0)}|}{\sqrt{\gamma}}.
        \end{equation*}
        Taking expectation over $X_{t(0)}$ and $\widetilde{X}_{t(0)}$ with Equation \eqref{eq: 3-1} gives
        \begin{equation} \label{eq: 3-2}
            \mathbb{P}_{\sigma, \widetilde{\sigma}}[\tau_\textrm{pre} > t(\gamma_1)] \leq O(\gamma^{-1/2}_1).
        \end{equation}
        After we reach to $\tau_\textrm{pre}$, the number of plus signs of $X_{\tau_\textrm{pre}}$ and $\widetilde{X}_{\tau_\textrm{pre}}$ are different by at most $2k$, by the definition of $\tau_\textrm{pre}$. Define another stopping time
        \begin{equation*}
            \tau_\textrm{almost} := \min\{i \geq k\tau_\textrm{pre}: |S(Y_i) - S(\widetilde{Y}_i)| \leq 2/n\}.
        \end{equation*}
        From Lemma \ref{lem: crossing is almost coupling}, there is a coupling from $\tau_\textrm{almost}$ to $k\lceil \tau_\textrm{almost}/k\rceil$ with high probability that ensure the magnetization difference is at most $4/n$. Without loss of generality suppose $S(Y_{k\tau_\textrm{pre}}) > S(\widetilde{Y}_{k\tau_\textrm{pre}})$. Consider a sequence $\{S_t - \widetilde{S}_t + 4k/n\}_{t \geq \tau_\textrm{pre}}$ by running $X_t$ and $\widetilde{X}_t$ independently, then we can apply Lemma \ref{lem: supermartingale lemma} for $\tau := \min\{t \geq k\tau_\textrm{pre} : S_t - \widetilde{S}_t + 4k/n \geq 4k/n\}$:
        \begin{equation*}
            \mathbb{P}_{\sigma, \widetilde{\sigma}}[\tau > \tau_\textrm{pre} + \gamma_2 n/k] \leq \frac{4\frac{8k}{n}}{\frac{\sqrt{k}}{n}\sqrt{\gamma_2 n}} = \frac{32 \sqrt{k}}{\sqrt{\gamma_2 n}} = O(\gamma_2^{-1/2}).
        \end{equation*}
        However $k\tau \geq \tau_\textrm{almost}$, hence 
        \begin{equation} \label{eq: 3-3}
            \mathbb{P}_{\sigma, \widetilde{\sigma}}[\tau_\textrm{almost} > k\tau_\textrm{pre} + \gamma_2 n] \leq O(\gamma_2^{-1/2}).
        \end{equation}
        From $\tau_\textrm{almost}$ we couple intermediates as we discussed in Lemma \ref{lem: crossing is almost coupling}. Then we can have at time $t = T := k\lceil\tau_\textrm{almost}/k\rceil$ the magnetization difference is less then or equal to $4/n$ with high probability. If it is $0$ then the magnetization matching phase is done. If not, we can rematch vertices from time $t = T$ so that $X_T(v)-\widetilde{X}_T(v)$ are either all non-negative or all non-positive. Then Proposition \ref{prop: magnetization contraction} and Markov inequality ensures
        \begin{equation} \label{eq: 3-4}
            \begin{split}
                \mathbb{P}_{\sigma, \widetilde{\sigma}}[S_{t + T} = \widetilde{S}_{t + T} | \mathcal{F}_T] &\leq n\mathbb{E}_{\sigma, \widetilde{\sigma}}\left[|S_{t + T} - \widetilde{S}_{t + T}| \big| |S_T - \widetilde{S}_T|\right] \\
                &\leq n\left( 1-\frac{k(1-\beta)}{n}\right)^t |S_T - \widetilde{S}_T| \leq 4e^{-\frac{kt(1-\beta)}{n}}
            \end{split}
        \end{equation}
        Hence $t = \gamma_3 n/k$ is enough to assure $S_{t + T} = \widetilde{S}_{t + T}$ with high probability. To sum up, define $\tau_\textrm{mag} := \min\{t\geq 0: S_t = \widetilde{S}_t\}$ then combining Equation \eqref{eq: 3-2}, \eqref{eq: 3-3}, and \eqref{eq: 3-4} gives
        \begin{equation}
            \begin{split}
            &\mathbb{P}_{\sigma, \widetilde{\sigma}}[\tau_\textrm{mag} < t(3\gamma)] \\
            &\qquad \geq \mathbb{P}_{\sigma, \widetilde{\sigma}}[\tau_\textrm{pre} < t(\gamma)] \mathbb{P}_{\sigma, \widetilde{\sigma}}[\tau_\textrm{almost} < k\tau_\textrm{pre} + \gamma n] \mathbb{P}_{\sigma, \widetilde{\sigma}}[\tau_\textrm{mag} < \lceil\tau_\textrm{almost}/k\rceil + \gamma n/k] \\
            &\qquad = 1 - O(\gamma^{-1/2}) \to 1.
            \end{split}
        \end{equation}
        as $\gamma \to \infty$.
    \end{proof}

    After we reach $\tau_\textrm{mag}$, we transform two chains $(X_t, \widetilde{X}_t)$ into another Markov chain $(U_t, \widetilde{U}_t)$ and try to setup a coupling on $U_t$. Following lemmas are necessary to move on to the two-coordinate chains $(U_t, \widetilde{U}_t)$. We omit the proof of Lemma \ref{lem: good starting points}; See \cite{LLP10} page 239 for the proof.

    \begin{lemma} \label{lem: good starting points}
        Suppose $k = o(n)$. For any subset $\Omega_0 \subset \Omega = \{-1, 1\}^n$ with stationary distribution $\mu$,
    \begin{equation*}
            \max_{\sigma \in \Omega} \| \mathbb{P}_\sigma(X_{t_0 + t}\in \cdot ) - \mu \|_{TV} \leq \max_{\sigma_0 \in \Omega_0} \| \mathbb{P}_{\sigma_0}(X_{t}\in \cdot ) - \mu \|_{TV} + \max_{\sigma \in \Omega} \mathbb{P}_\sigma(X_{t_0} \notin \Omega_0).
    \end{equation*}
    \end{lemma}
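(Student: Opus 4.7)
The plan is to prove this by a standard conditioning argument on the state at time $t_0$, exploiting the Markov property of $(X_t)$. The restriction $k=o(n)$ is not actually used in the argument itself; it only appears because it is a running assumption of the paper (so that the chain itself is well-behaved). The statement is purely about how total variation distance from $\mu$ at time $t_0+t$ can be controlled by (a) what happens from a ``good'' starting set $\Omega_0$ and (b) the probability of landing outside $\Omega_0$.

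First, I would fix an arbitrary $\sigma \in \Omega$ and an arbitrary event $A \subset \Omega$. Using the Markov property, I would write
\begin{equation*}
    \mathbb{P}_\sigma(X_{t_0+t} \in A) \;=\; \sum_{\eta \in \Omega} \mathbb{P}_\sigma(X_{t_0} = \eta)\, \mathbb{P}_\eta(X_t \in A),
\end{equation*}
and then subtract $\mu(A) = \sum_{\eta} \mathbb{P}_\sigma(X_{t_0}=\eta)\mu(A)$ from both sides, giving
\begin{equation*}
    \mathbb{P}_\sigma(X_{t_0+t} \in A) - \mu(A) \;=\; \sum_{\eta \in \Omega} \mathbb{P}_\sigma(X_{t_0}=\eta)\bigl[\mathbb{P}_\eta(X_t \in A) - \mu(A)\bigr].
\end{equation*}

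Next I would split the sum according to whether $\eta \in \Omega_0$ or $\eta \notin \Omega_0$. For $\eta \in \Omega_0$, each term $|\mathbb{P}_\eta(X_t\in A) - \mu(A)|$ is bounded by $\max_{\sigma_0 \in \Omega_0}\|\mathbb{P}_{\sigma_0}(X_t\in\cdot)-\mu\|_{TV}$, and the weights $\mathbb{P}_\sigma(X_{t_0}=\eta)$ sum to at most $1$. For $\eta \notin \Omega_0$, I would use the trivial bound $|\mathbb{P}_\eta(X_t\in A)-\mu(A)|\leq 1$ so that this contribution is at most $\mathbb{P}_\sigma(X_{t_0}\notin\Omega_0)$. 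Combining and taking $\sup_A$ yields the pointwise bound
\begin{equation*}
    \|\mathbb{P}_\sigma(X_{t_0+t}\in\cdot)-\mu\|_{TV} \;\leq\; \max_{\sigma_0\in\Omega_0}\|\mathbb{P}_{\sigma_0}(X_t\in\cdot)-\mu\|_{TV} + \mathbb{P}_\sigma(X_{t_0}\notin\Omega_0),
\end{equation*}
after which maximizing over $\sigma\in\Omega$ on both sides finishes the proof.

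There is no genuine obstacle here: the argument is purely a conditioning/triangle-inequality manipulation and does not require any of the fine estimates on magnetization or intermediate states that dominate the rest of the paper. The only minor care point is writing total variation in its ``supremum over events'' form rather than the $\ell^1$ form, so that the split into $\Omega_0$ and $\Omega_0^c$ is transparent; otherwise one can work directly with the $\ell^1$ definition by distributing the absolute value across the Markov decomposition. Either variant gives exactly the stated inequality.
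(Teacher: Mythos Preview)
Your proof is correct and is exactly the standard conditioning argument one expects here. The paper itself omits the proof of this lemma entirely, referring instead to \cite{LLP10}, page 239; your argument is precisely the one found there.
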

    
    Consider the set $\Omega_0 = \{\sigma\in\Omega : |S(\sigma)| \leq 1/2\}$. From Lemma \ref{lem: partial sum estimate}, there is a constant $\theta_0>0$ such that $|\mathbb{E}_\sigma[S_{\theta_0 n/k}]|\leq 1/4$. Since $k = o(n)$, 
        \begin{equation*}
            \begin{split}
                \mathbb{P}_\sigma(X_{\theta_0 n/k} \notin \Omega_0) &= \mathbb{P}_\sigma(|S_{\theta_0 n/k}| > 1/2) \\
                &\leq \mathbb{P}_\sigma(|S_{\theta_0 n/k} - \mathbb{E}_\sigma[S_{\theta_0 n/k}]| > 1/4) \leq 16\textrm{Var}_\sigma(S_{\theta_0 n/k}) = O(n^{-1}).
            \end{split}
        \end{equation*}
        
    The last equation comes from the consequence of Lemma \ref{lem: variance estimate no.2}. Both the number of positive spins and that of negative spins from any $\sigma_0 \in \Omega_0$ are in between $n/4$ and $3n/4$. More formally, define
        \begin{equation*}
            u_0 := |\{v \in V : \sigma_0(v) = 1\}|, \qquad v_0 := |\{v \in V : \sigma_0(v) = -1\}|
        \end{equation*}
        as numbers of each spins, and
        \begin{equation*}
            \Lambda_0 := \{(u, v) \in \mathbb{Z}^2 : n/4 \leq u, v \leq 3n/4, u+v = n\}.
        \end{equation*}
        Then $\sigma_0 \in \Omega_0$ if and only if $(u_0, v_0) \in \Lambda_0$. With this definition we move on two-coordinate chain $(u, v)$ from the original chain $(X_t)$.
        
        \begin{definition} (two-coordinate chain)
            Fix a configuration $\sigma_0 \in \Omega_0$. For $\sigma \in \Omega$, define
            \begin{equation*}
                \begin{split}
                U_{\sigma_0}(\sigma)&:= |\{v \in V: \sigma(v) = \sigma_0(v) = 1\}| \\
                V_{\sigma_0}(\sigma)&:= |\{v \in V: \sigma(v) = \sigma_0(v) = -1\}|.
                \end{split}
            \end{equation*}
        \end{definition}
        From now on, we shall write simply $U(\sigma)$ for $U_{\sigma_0}(\sigma)$ and $V(\sigma)$ for $V_{\sigma_0}(\sigma)$. \\
        For any randomized systematic scan dynamics chain $\{X_t\}$, we can define a process $(U_t, V_t)_{t\geq 0}$ by
        \begin{equation*}
            U_t = U(X_t) \qquad \textrm{and} \qquad V_t = V(X_t).
        \end{equation*}
        This is a Markov chain on $\{0, ..., u_0\} \times \{0, ..., v_0\}$. Denote the stationary measure for this chain as $\pi_2$, and note that this chain also determines the magnetization of the original chain $\{X_t\}$:
        \begin{equation*}
            S_t = \frac{2(U_t - V_t)}{n} - \frac{u_0 - v_0}{n}.
        \end{equation*}
        The first thing to check is how the original Markov chain and two-coordinate chain are related to each other. The total variation distances of the original chain and that of the two-coordinate chain turn out to be equal. This is also from \cite{LLP10}, page 241. We omit the proof.
        
        \begin{lemma} \label{lem: distance isometry}
            Suppose $(X_t)$ is the Glauber dynamics that starts from $\sigma_0$ and $(U_t, V_t)$ is the corresponding two-coordinate chain that starts from $(u_0, v_0)$. Then
            \begin{equation*}
                \|\mathbb{P}_{\sigma_0}(X_t \in \cdot) - \mu\|_{TV} = \|\mathbb{P}_{(u_0, v_0)}((U_t, V_t) \in \cdot) - \pi\|_{TV},
            \end{equation*}
            where $\mu$ and $\pi$ are the stationary distributions of two chains, respectively.
        \end{lemma}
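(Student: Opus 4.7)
My plan is to prove this by exploiting the symmetry of the complete graph and the Ising dynamics, reducing the total variation distance computation to the pushforward measures via a standard projection argument.

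First I would introduce the subgroup of vertex permutations that preserves the reference configuration. Write $V_+ := \{v : \sigma_0(v) = +1\}$ and $V_- := \{v : \sigma_0(v) = -1\}$, and let $H := \mathrm{Sym}(V_+) \times \mathrm{Sym}(V_-)$ act on $\Omega$ by $(h \cdot \sigma)(v) = \sigma(h^{-1}v)$. The key observation is that the fibers of the map $\phi(\sigma) := (U(\sigma), V(\sigma))$ are exactly the $H$-orbits: two configurations $\sigma$ and $\sigma'$ agree with $\sigma_0$ on the same number of vertices in $V_+$ and the same number in $V_-$ if and only if some $h \in H$ carries $\sigma$ to $\sigma'$.

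Next I would verify that the randomized systematic scan transition kernel $P$ is $H$-equivariant, i.e.\ $P(\sigma, \tau) = P(h\cdot\sigma, h\cdot\tau)$ for all $h \in H$ and all $\sigma, \tau \in \Omega$. This follows because the update rule at a vertex $v$ depends only on the current magnetization (by \eqref{eq: 1-1} and the formula for $p_\pm$), and the uniform random choice of $k$ vertices together with the uniform permutation is itself $H$-invariant. Since $\sigma_0$ is the unique configuration fixed pointwise by all of $H$, the starting distribution $\delta_{\sigma_0}$ is $H$-invariant, and by equivariance $\mathbb{P}_{\sigma_0}(X_t = \cdot)$ is $H$-invariant for every $t$. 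Combined with the previous paragraph, this says $\mathbb{P}_{\sigma_0}(X_t = \cdot)$ assigns equal mass to any two configurations in the same fiber of $\phi$; in other words, conditional on $(U_t, V_t) = (u,v)$ the distribution of $X_t$ is uniform on $\phi^{-1}(u,v)$. The stationary measure $\mu$ has the analogous property because $\pi(\sigma)$ depends only on the magnetization and hence is invariant under the full symmetric group on $V$, and in particular under $H$.

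Having established that both $\mathbb{P}_{\sigma_0}(X_t \in \cdot)$ and $\mu$ are uniform on each fiber of $\phi$, I apply the standard projection identity for total variation. For any two probability measures $\nu_1, \nu_2$ on $\Omega$ that are uniform on each fiber of a function $\phi : \Omega \to \Lambda$, one has
\begin{equation*}
    \|\nu_1 - \nu_2\|_{TV} = \tfrac{1}{2}\sum_{(u,v) \in \Lambda} \sum_{\sigma \in \phi^{-1}(u,v)} \left| \tfrac{\nu_1(\phi^{-1}(u,v))}{|\phi^{-1}(u,v)|} - \tfrac{\nu_2(\phi^{-1}(u,v))}{|\phi^{-1}(u,v)|} \right| = \|\phi_*\nu_1 - \phi_*\nu_2\|_{TV}.
\end{equation*}
Applying this with $\nu_1 = \mathbb{P}_{\sigma_0}(X_t \in \cdot)$ and $\nu_2 = \mu$ yields the desired equality, since $\phi_* \nu_1 = \mathbb{P}_{(u_0,v_0)}((U_t, V_t) \in \cdot)$ and $\phi_* \mu = \pi$. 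The only subtle step is verifying $H$-equivariance of the scan kernel carefully — every other part is bookkeeping — but this is immediate because the complete graph and the selection rule treat all vertices symmetrically.
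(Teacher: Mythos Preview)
Your proposal is correct and is precisely the standard symmetry/projection argument that the paper defers to \cite{LLP10} for; the paper itself omits the proof entirely. The only thing you might add for completeness is the remark that $H$-equivariance of $P$ propagates $H$-invariance of $\delta_{\sigma_0}$ to $\mathbb{P}_{\sigma_0}(X_t\in\cdot)$ by induction on $t$ (you state the conclusion but not this one-line induction), and that $\pi$ is by definition the $\phi$-pushforward of $\mu$ since $(U_t,V_t)=\phi(X_t)$.
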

        
        Thanks to Lemma \ref{lem: distance isometry}, it suffices to bound from above the total variation distance of the two-coordinate chain. Let's determine $\sigma_0$ later and consider the two coordinate chain first. The difference of $U$ becomes non-negative supermartingale for a short period of time.
        
        \begin{lemma} \label{lem: two-coordinate chain closes}
            Suppose two configuration $\sigma$ and $\widetilde{\sigma}$ satisfy $S(\sigma) = S(\widetilde{\sigma})$ and $U(\widetilde{\sigma}) - U(\sigma) > 0$. Define
            \begin{equation*}
                \Xi_1 := \Big\{\sigma^* \in \Omega : \min\{U(\sigma^*), u_0 - U(\sigma^*), V(\sigma^*), v_0 - V(\sigma^*)\}\geq \frac{n}{16} \Big\}
            \end{equation*}
            and
            \begin{equation*} \label{eq: 8-4}
                R(\sigma_1, \sigma_2):=U(\sigma_1)-U(\sigma_2), \qquad \qquad \tau_k = \min_{t\geq 0} \{t : R(\widetilde{X}_t, X_t) \leq k\}.
            \end{equation*}
            There exists a Markovian coupling $(X_t, \widetilde{X}_t)_{0 \leq t \leq \tau_k}$ of the randomized systematic scan dynamics with initial state $X_0 = \sigma$ and $\widetilde{X}_t = \tilde{\sigma}$, such that
            \begin{enumerate}[leftmargin=*]
                \item $S_t = \widetilde{S}_t$ at any time $t$.
                \item For all $t \leq \tau_k$, intermediate states $\{Y_i\}_{0\leq i\leq k\tau_k}$ and $\{\widetilde{Y}_i\}_{0\leq i \leq k\tau_k}$ such that $Y_{ik} = X_i$ and $\widetilde{Y}_{ik} = \widetilde{X}_i$ for all $i\geq 0$ satisfies
                \begin{equation*}
                    \mathbb{E}_{\sigma, \tilde{\sigma}} \left[R(\widetilde{Y}_{i+1}, Y_{i+1}) - R(\widetilde{Y}_i, Y_i) \Big| Y_i, \widetilde{Y}_i\right] \leq 0,
                \end{equation*}
                for any $i$.
                \item There exists a constant $c>0$ independent from $k$ and $n$ so that on the event $\{X_t \in \Xi, \widetilde{X}_t \in \Xi\}$,
                \begin{equation*}
                    \mathbb{P}_{\sigma, \tilde{\sigma}}\left[R(\widetilde{Y}_{i+1}, Y_{i+1}) - R(\widetilde{Y}_i, Y_i) \neq 0 \Big| Y_i, \widetilde{Y}_i\right] \geq c.
                \end{equation*}
            \end{enumerate}
        \end{lemma}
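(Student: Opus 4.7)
The plan is to construct a Markovian coupling $(X_t,\widetilde X_t)$ that rematches vertex labels of $\widetilde X$ with those of $X$ so as to preserve the common magnetization $S_t = \widetilde S_t$ while biasing $R = U(\widetilde X) - U(X)$ downward. Since $S_t = \widetilde S_t$, the disagreement set splits as $D^+ = \{v: X_t(v) = +1, \widetilde X_t(v) = -1\}$ and $D^- = \{v: X_t(v) = -1, \widetilde X_t(v) = +1\}$ with $|D^+| = |D^-|$. Partitioning each of $D^\pm$ further according to the value of $\sigma_0$, one sees $R = |D^-\cap \sigma_0^{-1}(+)| - |D^+\cap \sigma_0^{-1}(+)| = |D^+\cap \sigma_0^{-1}(-)| - |D^-\cap \sigma_0^{-1}(-)|$. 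I build a bijection $\phi\colon V \to V$ equal to the identity on the agreement set and pairing $D^+$ with $D^-$ by the priority: first pair vertices of $D^+ \cap \sigma_0^{-1}(-)$ with vertices of $D^- \cap \sigma_0^{-1}(+)$ (the \emph{good} pairings), then pair the remaining vertices within the same $\sigma_0$-class (\emph{neutral}), and only at the end accept the forced cross pairings (\emph{bad}). A short bookkeeping from $|D^+| = |D^-|$ yields $\#\mathrm{good} - \#\mathrm{bad} = R$ under this scheme.

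For each Markov step, choose a uniformly random ordered $k$-tuple $(v_1,\dots,v_k)$ of vertices to update in $X$, and update $\phi(v_i)$ in $\widetilde X$ using the same uniform $U_i$. By construction $Y_i(v_i) = \widetilde Y_i(\phi(v_i))$, and $S(Y_i) = S(\widetilde Y_i)$ propagates inductively, so the two paired sites have identical update probabilities $p_\pm$ and receive the same new spin; this preserves magnetization at every intermediate, giving (1). Since $\phi$ is a bijection of $V$, the $\widetilde X$-marginal remains a uniformly random scan. A direct case analysis over the four pair types shows $\Delta R \in \{-1,0,+1\}$, nonzero only when the new spin takes the unfavorable value: good pairings contribute $-1$, bad pairings $+1$, and neutral pairings $0$. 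Averaging over the uniform choice of $v_i$ gives
\[
\mathbb{E}\big[R(\widetilde Y_{i+1}, Y_{i+1}) - R(\widetilde Y_i, Y_i) \,\big|\, Y_i,\widetilde Y_i\big] \;\le\; -\frac{R\, p^\star}{n - i} \;\le\; 0,
\]
where $p^\star > 0$ is a uniform lower bound on $p_\pm$ valid for fixed $\beta$ when $|S| \le 1$, establishing (2).

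For (3), I would exploit the fact that on $\Xi_1$ each of the four $(\sigma,\sigma_0)$-classes of $X_t$ and of $\widetilde X_t$ has size at least $n/16$, and that $t < \tau_k$ forces $R > k$, to argue that the good and bad pair endpoints together account for a constant proportion of the $n$ vertices; together with the uniform lower bound on $p_\pm$, this yields a constant $c = c(\beta) > 0$ bound on the probability that $R$ changes per intermediate step. The main obstacle is precisely this last step: $\Xi_1$ controls only the individual chains' class sizes, not the joint disagreement counts $|D^\pm\cap \sigma_0^{-1}(\pm)|$, which could in principle be much smaller than $n$. Extracting a constant-proportion lower bound on the endpoints from the $\Xi_1$ inequalities, together with the balance $|D^+\cap \sigma_0^{-1}(+)| + |D^+\cap \sigma_0^{-1}(-)| = |D^-|$, is the key combinatorial computation. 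A secondary technical point is that once $\phi$ differs from the identity, the unavailable-vertex sets of the two chains diverge during a scan, so $\phi$ must be engineered (for instance by re-randomizing within neutral pairings at each intermediate) to keep both marginals valid randomized scans step by step.
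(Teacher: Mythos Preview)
Your bijection coupling correctly handles (1) and (2). The ``secondary technical point'' you worry about is a non-issue: since $\phi$ is a fixed bijection on $V$ and $(v_1,\dots,v_k)$ is a uniformly random ordered $k$-tuple without replacement, $(\phi(v_1),\dots,\phi(v_k))$ is too, so the $\widetilde X$-marginal is automatically correct. For (2), your displayed drift bound is slightly off at intermediate steps (the relation $\#\text{good}-\#\text{bad}=R$ holds only at the start of each scan), but since $R_{kt}>k$ for $t<\tau_k$ and at most $i'$ good pairs can be consumed in $i'$ substeps, one still gets $g_i-b'_i>0$ and hence $\mathbb{E}[\Delta R\mid Y_i,\widetilde Y_i]\le 0$.

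The real gap is (3), and it is not a computational obstacle but a structural failure of your coupling. Under a deterministic bijection with $\phi=\mathrm{id}$ on the agreement set, $R$ can change only when the selected vertex lies in a good or bad pair, hence in the disagreement set $D^+\cup D^-$. But $\Xi_1$ does not force the disagreement set to be large. Concretely, take $u_0=v_0=n/2$, $U(X_0)=V(X_0)=n/4$, and let $X_0$ and $\widetilde X_0$ differ at exactly $2(k{+}1)$ vertices: $k{+}1$ vertices in $\{v:\sigma_0(v)=-1,\,X_0(v)=+1\}$ flipped to $-1$, and $k{+}1$ vertices in $\{v:\sigma_0(v)=+1,\,X_0(v)=-1\}$ flipped to $+1$. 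Then $(X_0,\widetilde X_0)\in\Xi_1\times\Xi_1$, $R_0=k{+}1$, but the disagreement set has size $2(k{+}1)$ with all pairs good and none bad. Your coupling then gives $\mathbb{P}[\Delta R\neq 0]\le 2(k{+}1)/n\to 0$, so no uniform constant $c$ exists.

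The paper avoids this by \emph{not} using a vertex bijection. It chooses $I_i$ uniformly among available vertices for $Y$, and then chooses $\widetilde I_i$ uniformly among the available vertices of $\widetilde Y$ whose \emph{current spin equals} $Y_i(I_i)$; both receive the same new spin $\mathcal S$. Because updated vertices in the two chains carry the same spin history, the number of available $+1$'s (resp.\ $-1$'s) is equal in $Y_i$ and $\widetilde Y_i$, so the $\widetilde X$-marginal is uniform as required. Crucially, under this coupling $R$ changes whenever $I_i$ and $\widetilde I_i$ land in different $\sigma_0$-classes, which happens with probability governed by the individual class sizes $A_i,B_i,C_i,D_i,\widetilde A_i,\widetilde B_i,\widetilde C_i,\widetilde D_i$; on $\Xi_1$ each of these is at least $n/16-k$, yielding $\mathbb{P}[\Delta R\neq 0]\ge c(\beta)>0$ directly. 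The supermartingale property follows because $\widetilde C_i-C_i$ and $\widetilde B_i-B_i$ equal $-R_{kt}\le -k$ at the start of each scan and move by at most one per substep.
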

        
        \begin{remark*}
            The lemma ensures the chain $R(\widetilde{Y}_i, Y_i)$ to be supermartingale until it becomes less than $k$. Note that the statement works on all intermediate states, not only for $R(\widetilde{X}_t, X_t)$.
        \end{remark*}
        
        \begin{proof}
            We couple $(X_t, \widetilde{X}_t)$ by coupling $(Y_i, \widetilde{Y}_i)_{0\leq i\leq k\tau_k}$ for every $i$. From $(Y_i, \widetilde{Y}_i)$ to $(Y_{i+1}, \widetilde{Y}_{i+1})$,  choose a vertex $I_i$ randomly from $V \setminus \mathcal{N}_i$ and assign the next spin $\mathcal{S}$ according to the probability from equation \eqref{eq: 2-1};
            \begin{equation*}
                \mathbb{P}(\mathcal{S} = 1) = p_{+}(S(Y_i) - Y_i(I_i)/n).
            \end{equation*}
            In other words, $Y_{i+1}$ becomes
            \begin{equation*}
                Y_{i+1}(v) = 
                \begin{cases}
                    Y_i(v) & v \neq I_i, \\
                    \mathcal{S} & v = I_i.
                \end{cases}
            \end{equation*}
            For $\widetilde{Y}_{i+1}$, we select $\widetilde{I}_i$ randomly from $\{v : \widetilde{Y}_i(v) = Y_i(v)\} \setminus \widetilde{\mathcal{N}}_i$  and set
            \begin{equation*}
                \widetilde{Y}_{i+1}(v) = 
                \begin{cases}
                    \widetilde{Y}_i(v) & v \neq \widetilde{I}_i, \\
                    \mathcal{S} & v = \widetilde{I}_i.
                \end{cases}
            \end{equation*}

            \begin{figure}[h]
                \includegraphics[scale = 0.45]{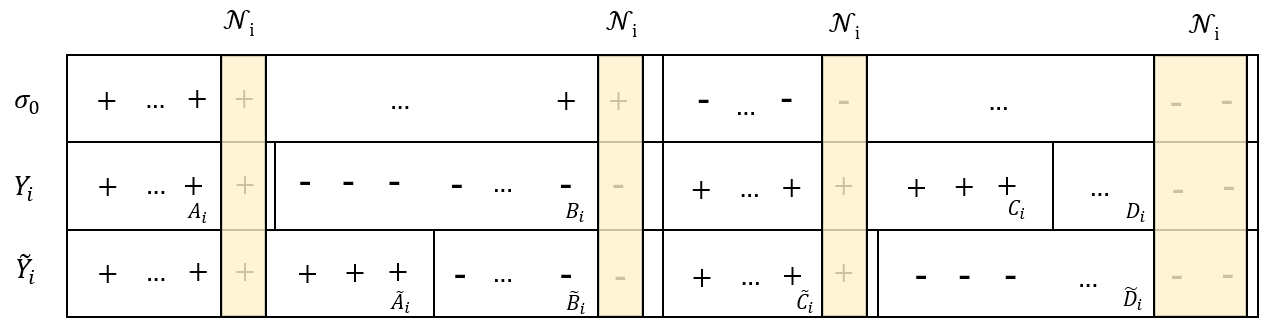}
                \centering
                \caption{All vertices except $\mathcal{N}_i$(which are not allowed to be updated at the moment) are divided into four categories. Some vertices from $B_i \cap \widetilde{A}_i$ and $C_i \cap \widetilde{D}_i$ can be also included in $\mathcal{N}_i$.}
                \label{fig: coupling}
            \end{figure}
            
            The first condition of Lemma \ref{lem: two-coordinate chain closes} is done since this coupling ensures $S(Y_i) = S(\widetilde{Y}_i)$ for any $i$. Next, consider
            \begin{equation*}
                \begin{split}
                    A_i &= |\{v : \sigma_0(v) = 1, Y_i(v) = 1\} \setminus \mathcal{N}_i| \\
                    B_i &= |\{v : \sigma_0(v) = 1, Y_i(v) = -1\} \setminus \mathcal{N}_i| \\
                    C_i &= |\{v : \sigma_0(v) = -1, Y_i(v) = 1\} \setminus \mathcal{N}_i| \\
                    D_i &= |\{v : \sigma_0(v) = -1, Y_i(v) = -1\} \setminus \mathcal{N}_i|,
                \end{split}
            \end{equation*}
            and $\widetilde{A}_i$, $\widetilde{B}_i$, $\widetilde{C}_i$, $\widetilde{D}_i$ analogous way. From $S(Y_i) = S(\widetilde{Y}_i)$ we have $A_i + C_i = \widetilde{A}_i + \widetilde{C}_i$ and $B_i + D_i = \widetilde{B}_i + \widetilde{D}_i$.

            Denote $R_i := R(\widetilde{Y}_i, Y_i)$, and $i' = i \textrm{(mod k)}$ for convenience. Then
            \begin{equation*}
                \begin{split}
                \mathbb{P}_{X_0, \widetilde{X}_0}[R_{i+1} - R_i = -1 | Y_i, \widetilde{Y}_i]
                &= \frac{C_i}{n - i'}\frac{\widetilde{A}_i}{\widetilde{A}_i + \widetilde{C}_i}p_-\Big(S(Y_i) - \frac{1}{n}\Big) + \frac{B_i}{n-i'}\frac{\widetilde{D}_i}{\widetilde{B}_i + \widetilde{D}_i}p_+\Big(S(Y_i) + \frac{1}{n}\Big) \\
                \mathbb{P}_{X_0, \widetilde{X}_0}[R_{i+1} - R_i = 1 | Y_i, \widetilde{Y}_i]
                &= \frac{A_i}{n - i'}\frac{\widetilde{C}_i}{\widetilde{A}_i + \widetilde{C}_i}p_-\Big(S(Y_i) - \frac{1}{n}\Big) + \frac{D_i}{n-i'}\frac{\widetilde{B}_i}{\widetilde{B}_i + \widetilde{D}_i}p_+\Big(S(Y_i) + \frac{1}{n}\Big).
                \end{split}
            \end{equation*}
            Since $R_{i+1} - R_i \in \{-1, 0, 1\}$,
            \begin{equation*}
                \mathbb{E}_{X_0, \widetilde{X}_0}[R_{i+1} - R_i | Y_i, \widetilde{Y}_i] = 
                \frac{\widetilde{C}_i - C_i}{n - i}p_-\Big(S(Y_i) - \frac{1}{n}\Big) + \frac{\widetilde{B}_i - B_i}{n - i}p_+\Big(S(Y_i) + \frac{1}{n}\Big).
            \end{equation*}
            For any $t \leq \tau_k - 1$,
            \begin{equation*}
                k \leq R_{kt} = -(\widetilde{C}_{kt} - C_{kt}) = -(\widetilde{B}_{kt} - B_{kt}),
            \end{equation*}
            hence $\widetilde{C}_i - C_i$ and $\widetilde{B}_i - B_i$ are both less then or equal to $0$ if $i \leq k\tau_k$ because both of them can only change by $1$ as $i$ changes. The last condition of Lemma \ref{lem: two-coordinate chain closes} follows from that $p_\pm([-1, 1]) \subset [\epsilon, 1-\epsilon]$ for some $\epsilon>0$.
        \end{proof}
        
        Lemma \ref{lem: two-coordinate chain closes} satisfies all the conditions of Lemma \ref{lem: supermartingale lemma}, thus we can calculate how much time we need for $\tau_k$. Without loss of generality let $U_{\tau_k} < \widetilde{U}_{\tau_k}$. At each time $t \geq \tau_k$, rematch the vertices of $A_{tk}$ to $\widetilde{A}_{tk}$ as many as possible, and do the same procedure for $D_{tk}$ to $\widetilde{D}_{tk}$. Match $\widetilde{B}_{tk}$ to $B_{tk}$, and $\widetilde{C}_{tk}$ to $C_{tk}$ as well. Match the remaining vertices of $C_{tk}$ to those of $\widetilde{A}_{tk}$, and those of $B_{tk}$ to $\widetilde{D}_{tk}$. All vertices from $X_t$ and $\widetilde{X}_t$ are matched one to one; Now update $k$ vertices arbitrarily from $X_t$ and corresponding vertices from $\widetilde{X}_t$ under the Monotone coupling. Repeat the rematching process according to their current spins and updates, unless $U_t = \widetilde{U}_t$. This coupling has two remarks. First of all, magnetizations remain coupled during the process, and secondly $R_t$ is non-negative and non-increasing sequence.
        
        \begin{lemma} \label{lem: tau_k to tau_match}
            Suppose $k = o(n^{1/3})$. Define $\tau_{\normalfont\textrm{match}} = \min \{t \geq \tau_k : U_t = \widetilde{U}_t\}$. Then we have
            \begin{equation}
                \mathbb{P}_{X_{\tau_k}, \widetilde{X}_{\tau_k}}\Big\{\tau_{\normalfont\textrm{match}} > \tau_k + \frac{\gamma n}{k}\Big\} = O(k^{3/2}n^{-1/2}).
            \end{equation}
        \end{lemma}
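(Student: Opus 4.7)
The plan is to show that $R_t := U(\widetilde{X}_t) - U(X_t)$, evolving under the rematch-plus-monotone coupling described immediately before the lemma, is a non-negative supermartingale with strong drift toward $0$, and then to conclude via Lemma \ref{lem: supermartingale lemma} or via Markov's inequality applied to the exponential bound on $\mathbb{E}[R_t]$. First I verify that $R_t$ is non-negative and non-increasing: each matched pair (same $\sigma_0$-label, same current spin) stays matched because the monotone coupling assigns both halves the same new spin, while each mismatched pair---either a $(C,\widetilde{A})$ pair (both of spin $+1$) or a $(B,\widetilde{D})$ pair (both of spin $-1$)---either stays mismatched when the new spin equals the old or ``activates'' when the spin flips, and each activation decreases $R_t$ by exactly one.

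Next I estimate the drift and variance. There are exactly $R_t$ mismatched pairs of each of the two types, each such pair is selected for updating with probability $k/n$ per step, and when selected it activates with probability $p_-(S_t - n^{-1})$ or $p_+(S_t + n^{-1})$, respectively. Provided both chains remain in the good region $\Xi_1$ so that these activation probabilities are uniformly bounded away from $0$ and $1$, one obtains
\begin{equation*}
\mathbb{E}\bigl[R_{t+1} - R_t \mid R_t\bigr] \le -\frac{c_0 k}{n} R_t, \qquad \mathrm{Var}\bigl[R_{t+1} - R_t \mid R_t\bigr] \asymp \frac{k R_t}{n},
\end{equation*}
for some constant $c_0 = c_0(\beta) > 0$.

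Iterating the drift inequality gives $\mathbb{E}[R_{\tau_k + t}] \le R_{\tau_k}\,(1 - c_0 k/n)^{t} \le k e^{-c_0 k t / n}$. Taking $\gamma$ so that $k e^{-c_0 \gamma} \le k^{3/2} n^{-1/2}$, Markov's inequality then yields
\begin{equation*}
\mathbb{P}\bigl(\tau_\mathrm{match} > \tau_k + \gamma n / k\bigr) = \mathbb{P}\bigl(R_{\tau_k + \gamma n/k} \ge 1\bigr) \le \mathbb{E}\bigl[R_{\tau_k + \gamma n/k}\bigr] = O(k^{3/2} n^{-1/2}),
\end{equation*}
as required. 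Alternatively, Lemma \ref{lem: supermartingale lemma} applied to $W_t = R_{\tau_k + t}$ with $W_0 \le k$, $B = k$ and the variance lower bound $\sigma^2 \gtrsim k/n$ (valid on $\{R_t \ge 1\}$) gives the analogous hitting-time formulation directly.

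The main obstacle is ensuring that both chains remain inside $\Xi_1$ throughout the window, since the drift and variance estimates rely on the activation probabilities being uniformly bounded. This is handled by a union bound over the $O(n/k)$ steps combined with the tail bounds from Lemma \ref{lem: variance estimate no.1} and Lemma \ref{lem: partial sum estimate}(ii), which show excursions with $|S_t| \ge 1/2$ are very unlikely over this window. A secondary technical point is the $O(k^2/n^2)$ per-step correction from selection-without-replacement within a single step, which is negligible under $k = o(n^{1/3})$ and can be absorbed into the constants $c_0$.
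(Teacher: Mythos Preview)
Your proposal has a genuine gap: neither argument you sketch yields the bound $O(k^{3/2}n^{-1/2})$ when $\gamma$ is held fixed, which is how the lemma is used (one takes $n\to\infty$ first, then $\gamma\to\infty$). The exponential-drift route gives $\mathbb{P}(R_{\tau_k+\gamma n/k}\ge1)\le ke^{-c_0\gamma}$, which does not vanish as $n\to\infty$ once $k\to\infty$; you cannot ``take $\gamma$'' depending on $n$. The Lemma~\ref{lem: supermartingale lemma} route with $\sigma^2\gtrsim k/n$ gives only $4k/(\sqrt{k/n}\,\sqrt{\gamma n/k})=O(k/\sqrt\gamma)$, again useless. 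The obstruction is intrinsic to the rematch-plus-monotone coupling: since $R_t$ is non-increasing and a mismatched pair is hit with probability $\asymp jk/n$ per step when $R_t=j$, the expected time to drop from $j$ to $j-1$ is $\asymp n/(jk)$, so $\mathbb{E}[\tau_{\mathrm{match}}-\tau_k]\asymp(n/k)\sum_{j=1}^k 1/j\asymp (n/k)\log k$. This coupon-collector cost already exceeds $\gamma n/k$ for any fixed $\gamma$ when $k$ is large, so no argument based solely on this coupling can give the stated time bound.

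The paper avoids this by passing to a coupling with \emph{constant} variance per intermediate step. It compares systematic scan to ordinary single-site Glauber: under the Lemma~\ref{lem: two-coordinate chain closes}-style coupling (where $R_i$ may move up or down), $R_i$ is a supermartingale with $\mathrm{Var}(R_{i+1}-R_i\mid\cdot)\asymp 1$ on $\Xi_1$, so Lemma~\ref{lem: supermartingale lemma} yields $\mathbb{P}(\tau'_R>A)\lesssim k/\sqrt A$ for the single-site chain. Single-site Glauber over $A$ intermediate steps coincides with systematic scan whenever no block of $k$ consecutive updates repeats a vertex, an event of probability at least $e^{-Ak/n}$. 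Choosing $A=\sqrt{nk}$ makes both error terms $o(1)$ precisely when $k=o(n^{1/3})$, which is where the hypothesis is genuinely needed. The key idea missing from your attempt is this passage to a $\Theta(1)$-variance coupling at the intermediate level; without it the diffusive hitting-time estimate is unavailable.
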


        \begin{proof}
            Suppose we apply the original (single site) Glauber dynamics to $X_{\tau_k}$ and $\widetilde{X}_{\tau_k}$ and let them $X'_t$ and $\widetilde{X}'_t$. Then $\{R(X'_i, \widetilde{X}'_i)\}_{i \geq k\tau_k}$ becomes supermartingale. Lemma \ref{lem: supermartingale lemma} ensures that the stopping time $\tau'_R := \{i \geq k\tau_k: R(X'_i, \widetilde{X}'_i) = 0\}$ satisfies $\mathbb{P}_{\sigma, \widetilde{\sigma}}[\tau'_R > k\tau_k + A | R_{k\tau_k}] \lesssim \frac{k}{\sqrt{A}}$ for any large enough $A$. Now compare this result with systematic scan dynamics case. Define $\tau_R := \{i \geq k\tau_k: R_i = R(X_i, \widetilde{X}_i) = 0\}$. In the similar fashion as Lemma \ref{lem: crossing is almost coupling}, grouping by $k$ numbers of single site updates from $i = \tau_k$ to $i = \tau_k + A$ generates $A / k$ times of systematic scan update and the probability that none of the group update same vertex more than once equals greater than $(e^{-k^2/n})^{A / k} = e^{-Ak/n}$. Hence
            \begin{equation*}
                \begin{split}
                    \mathbb{P}_{\sigma, \widetilde{\sigma}}[\tau_R \leq k\tau_k + A | R_{k\tau_k}] &\geq \mathbb{P}[\tau'_R = \tau_R]\mathbb{P}_{\sigma, \widetilde{\sigma}}[\tau'_R \leq k\tau_k + A | R_{k\tau_k}] \\
                    &\geq e^{-Ak/n}\left(1 - O\left(\frac{k}{\sqrt{A}}\right)\right).
                \end{split}
            \end{equation*}
            Pick $A = \sqrt{nk}$, then the rightmost term becomes $1 - O(k^{3/2}n^{-1/2})$, which goes to $1$. For the original dynamics, after $i = \tau'_R$, $R(X'_i, \widetilde{X}'_i)$ remains zero. Hence modifying the argument for the smallest multiple of $k$ greater than $\tau'_R$ would give the same result on $\tau_\textrm{match}$. The proof is finished by $\mathbb{P}[\tau_{\normalfont\textrm{match}} \geq \tau_k + \sqrt{n/k}] \geq \mathbb{P}[\tau_{\normalfont\textrm{match}} > \tau_k + n/k] \geq \mathbb{P}[\tau_{\normalfont\textrm{match}} > \tau_k + \gamma n/k]$.
        \end{proof}

        We must introduce one more lemma to finish the two-coordinate chain phase. If two arbitrary chain starting from $(\sigma, \widetilde{\sigma})$ reach to $\tau_\textrm{mag}$, we have not only $S_{\tau_\textrm{mag}} = \widetilde{S}_{\tau_\textrm{mag}}$ but also we have a bound for $|U_{\tau_\textrm{mag}} - \widetilde{U}_{\tau_\textrm{mag}}|$.

        \begin{lemma} \label{lem: two-coordinate phase starting point}
            For arbitrary $\sigma, \widetilde{\sigma} \in \Omega$, consider chains $(X_t, \widetilde{X}_t)$ with $X_0 = \sigma$ and $\widetilde{X}_0 = \widetilde{\sigma}$, and suppose the magnetization matching phase is finished according to Theorem \ref{thm: magnetization matching phase}; i.e. $S_{\normalfont \tau_\textrm{mag}} = \widetilde{S}_{\normalfont \tau_\textrm{mag}}$. At that moment,
            \begin{equation*}
                \mathbb{E}_{\sigma, \widetilde{\sigma}}\Big[|U_{\normalfont \tau_\textrm{mag}} - \widetilde{U}_{\normalfont \tau_\textrm{mag}}|\Big] = O(\sqrt{n})
            \end{equation*}
            holds for $\sigma_0 = \sigma$.
        \end{lemma}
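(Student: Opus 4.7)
The plan is to reduce the problem to Lemma \ref{lem: partial sum estimate}(iii) by rewriting $U_t - \widetilde{U}_t$ as a difference of partial-spin sums. Since $\sigma_0 = \sigma$, let $A := \{v \in V : \sigma(v) = 1\}$, so $|A| = u_0$. Then
\[
U(X_t) = \sum_{v \in A}\tfrac{1 + X_t(v)}{2} = \tfrac{|A|}{2} + M_t(A),
\]
and similarly $\widetilde{U}(X_t) = |A|/2 + \widetilde{M}_t(A)$. The triangle inequality gives
\[
|U_{\tau_\textrm{mag}} - \widetilde{U}_{\tau_\textrm{mag}}| \leq |M_{\tau_\textrm{mag}}(A)| + |\widetilde{M}_{\tau_\textrm{mag}}(A)|,
\]
so it suffices to prove $\mathbb{E}_\sigma[|M_{\tau_\textrm{mag}}(A)|] = O(\sqrt{n})$ (with the analogous bound for $\widetilde{M}$).

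For each fixed $t$, Lemma \ref{lem: partial sum estimate}(iii) bounds $\mathbb{E}_\sigma[|M_t(A)|]$ by $ne^{-kt(1-\beta)/n} + O(\sqrt{n})$, which is already $O(\sqrt{n})$ as soon as $t \geq t_0 := [2k(1-\beta)]^{-1}n\log n$. The only real work is transferring this fixed-time estimate to the stopping time $\tau_\textrm{mag}$. To do so, I would couple $X_t$ (starting from $\sigma$) monotonically to an auxiliary chain $Z_t$ with $Z_0 \sim \mu$; by Proposition \ref{prop: hamming distance contraction}, $\rho^{-t}\textrm{dist}(X_t, Z_t)$ is a non-negative supermartingale, so optional stopping at $\tau_\textrm{mag} \wedge t^*$ (with $t^* = t_0 + 3\gamma n/k$) controls $\mathbb{E}[\textrm{dist}(X_{\tau_\textrm{mag} \wedge t^*}, Z_{\tau_\textrm{mag} \wedge t^*})]$ by $\rho^{t_0}n = O(\sqrt{n})$, once one observes that the coupling in Theorem \ref{thm: magnetization matching phase} cannot produce a magnetization match before $t_0$. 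Combining this with the stationary bound $\sqrt{\mathbb{E}_\mu[M(A)^2]} = O(\sqrt{n})$ (a consequence of $\textrm{Var}_\mu(S) = O(1/n)$) and the triangle inequality $|M_t(A)| \leq \textrm{dist}(X_t, Z_t) + |M^Z_t(A)|$ yields $\mathbb{E}[|M_{\tau_\textrm{mag}\wedge t^*}(A)|] = O(\sqrt{n})$. On the low-probability event $\{\tau_\textrm{mag} > t^*\}$, of probability $O(\gamma^{-1/2})$ by Theorem \ref{thm: magnetization matching phase}, I would use the trivial bound $|M| \leq n$; choosing $\gamma = \Theta(n)$ makes that contribution $O(\sqrt{n})$ as well.

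The main obstacle is the stopping-time subtlety: Lemma \ref{lem: partial sum estimate}(iii) is proved at deterministic times, whereas $\tau_\textrm{mag}$ is random and moreover defined through a coupling in Theorem \ref{thm: magnetization matching phase} that is not globally monotone. The supermartingale and optional-stopping argument above handles it, provided the auxiliary monotone coupling with the stationary chain is carried in parallel to, rather than as part of, the main joint coupling between $X_t$ and $\widetilde{X}_t$, so that the marginal dynamics of $X_t$ (and separately of $\widetilde{X}_t$) is what we apply the supermartingale bound to.
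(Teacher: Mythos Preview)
Your approach is the same as the paper's: both rewrite $U_t = M_t(A_0) + u_0/2$ and $\widetilde U_t = \widetilde M_t(A_0) + u_0/2$, apply the triangle inequality, and invoke Lemma~\ref{lem: partial sum estimate}(iii). The paper's proof is in fact terser than yours---after the triangle inequality it simply says ``take expectation, by Lemma~\ref{lem: partial sum estimate} we get the result'' and does not address the stopping-time issue at all; your supermartingale/optional-stopping layer and the auxiliary stationary chain $Z_t$ are additional rigor not present in the paper, and your observation that the coupling of Theorem~\ref{thm: magnetization matching phase} only produces a match at a time $\geq t_0$ (since $\tau_{\mathrm{pre}}\geq t(0)$ by definition) is exactly what is needed to make the optional-stopping step go through.
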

        
        \begin{proof}
            Let $u_0$ be the number of $(+1)$ spins from $\sigma_0$.
            \begin{equation*}
                u_0 = \frac{n(1 + S(\sigma_0))}{2}.
            \end{equation*}
            Then we can observe that
            \begin{equation*}
                U_t = M_t(A_0) + \frac{u_0}{2} \qquad \textrm{ and } \qquad \widetilde{U}_t = \widetilde{M}_t(A_0) + \frac{u_0}{2},
            \end{equation*}
            where $A_0 = \{v : \sigma(v) = 1\}$ ($M_t$ is defined at Lemma \ref{lem: partial sum estimate}). The difference of the two satisfies
            \begin{equation*}
                |U_t - \widetilde{U}_t| = |M_t(A_0) - \widetilde{M}_t(A_0)| \leq |M_t(A_0)| + |\widetilde{M}_t(A_0)|.
            \end{equation*}
            Take expectation for $\sigma$ and $\widetilde{\sigma}$. By Lemma \ref{lem: partial sum estimate}, we get the result.
        \end{proof}
        
        \begin{theorem} \label{two-coordinate chain phase} (two-coordinate chain phase) \\
            Suppose $k = o(\sqrt[3]{n})$. Consider $(X_t, \widetilde{X_t})$ which starts from $(\sigma, \widetilde{\sigma})$. Evolve the chain as given in Theorem \ref{thm: magnetization matching phase} and suppose $\normalfont \tau_\textrm{mag} \leq t_1 + \gamma n/k$ for some $\gamma$. Then their two-coordinate chains $U_t$ and $\widetilde{U}_t$ match after additional $ 2\gamma n/k$ time with high probability.
        \end{theorem}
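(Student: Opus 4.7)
The plan is to split the two-coordinate chain phase into two sub-stages after $\tau_\textrm{mag}$: first drive $R_t := \widetilde{U}_t - U_t$ down to at most $k$ via the coupling of Lemma~\ref{lem: two-coordinate chain closes}, and then close the remaining gap exactly by invoking Lemma~\ref{lem: tau_k to tau_match}. Each sub-stage should consume $\gamma n/k$ time with probability tending to $1$, summing to the advertised $2\gamma n/k$ window.

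For the first sub-stage I first initialize: by Lemma~\ref{lem: two-coordinate phase starting point} and Markov's inequality, $|R_{\tau_\textrm{mag}}| \leq \gamma^{1/4}\sqrt{n}$ with probability $1 - O(\gamma^{-1/4})$, and after possibly swapping the roles of $X_t$ and $\widetilde{X}_t$ I may assume $R_t = \widetilde{U}_t - U_t \geq 0$. I then run the coupling of Lemma~\ref{lem: two-coordinate chain closes}, which preserves $S_t = \widetilde{S}_t$ and makes $R_t$ a non-negative supermartingale at the per-scan scale for $t < \tau_k$, with scan-level jumps bounded by $B := k$. For the variance, part (iii) of Lemma~\ref{lem: two-coordinate chain closes} gives that each of the $k$ intermediate updates contributes $\Omega(1)$ to $\textrm{Var}[R_{t+1} - R_t \mid \mathcal{F}_t]$ on the event that both chains lie in $\Xi_1$, so aggregating the $k$ intermediate steps yields $\textrm{Var}[R_{t+1} \mid \mathcal{F}_t] \geq ck =: \sigma^2$ for some $c>0$. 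Lemma~\ref{lem: supermartingale lemma} applied with $W_0 \leq \gamma^{1/4}\sqrt{n}$, $B = k$, $\sigma^2 = ck$ then yields
\begin{equation*}
    \mathbb{P}\bigl[\tau_k > \tau_\textrm{mag} + \gamma n/k\bigr] \leq \frac{4 \gamma^{1/4}\sqrt{n}}{\sqrt{ck}\cdot\sqrt{\gamma n/k}} = O(\gamma^{-1/4}).
\end{equation*}

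For the second sub-stage I directly apply Lemma~\ref{lem: tau_k to tau_match}: on the event $\tau_k \leq \tau_\textrm{mag} + \gamma n/k$, one has $\tau_\textrm{match} \leq \tau_k + \gamma n/k$ with probability $1 - O(k^{3/2}/\sqrt{n})$, and this error is $o(1)$ under $k = o(n^{1/3})$. A union bound over the initialization event, the first sub-stage, and the second sub-stage gives $\tau_\textrm{match} \leq \tau_\textrm{mag} + 2\gamma n/k$ with probability $1 - O(\gamma^{-1/4}) - O(k^{3/2}/\sqrt{n}) \to 1$ as $\gamma, n \to \infty$, as claimed.

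The main obstacle I anticipate is verifying the per-scan variance lower bound rigorously, since Lemma~\ref{lem: two-coordinate chain closes}(iii) only produces the $\Omega(1)$ intermediate-step kick while both chains lie in $\Xi_1$. To handle this I would first use Lemma~\ref{lem: good starting points} to replace the initial configurations by elements of $\Omega_0$, and then use concentration of $U_t$ near $u_0/2$ (e.g.\ via Lemma~\ref{lem: partial sum estimate} applied to $A = \{v : \sigma_0(v)=1\}$ combined with Chebyshev) to show $X_t, \widetilde{X}_t \in \Xi_1$ throughout the $\gamma n/k$ window with probability $1-o(1)$, so that the variance bound above is not destroyed by the error term.
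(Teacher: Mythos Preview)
Your approach is the same as the paper's: the identical two sub-stages (Lemma~\ref{lem: two-coordinate chain closes} together with Lemma~\ref{lem: supermartingale lemma} to reach $\tau_k$, then Lemma~\ref{lem: tau_k to tau_match} to reach $\tau_\textrm{match}$), and the same use of Lemma~\ref{lem: two-coordinate phase starting point} to control the initial size of $R$. The paper integrates the supermartingale tail bound directly against $\mathbb{E}|R_{\tau_\textrm{mag}}| = O(\sqrt{n})$, obtaining $O(\gamma^{-1/2})$, whereas you first truncate via Markov to get $O(\gamma^{-1/4})$; either suffices.

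The one place where your sketch needs sharpening is the $\Xi_1$ step, and it is a genuine (if small) gap as written. Pointwise, the second-moment bound from Lemma~\ref{lem: partial sum estimate}(ii) indeed gives $\mathbb{P}[|M_t(A_0)| > n/32] = O(1/n)$ at each fixed $t \geq t(\gamma)$, but a naive union bound over the $2\gamma n/k$ times in the window yields only $O(\gamma/k)$, which does \emph{not} vanish in the required double limit $n\to\infty$ then $\gamma\to\infty$ when $k$ is bounded (and the theorem allows any $k = o(n^{1/3})$, including $k=1$). The paper fixes this by using the Lipschitz property $|M_{t+1}(A_0) - M_t(A_0)| \leq k$: any excursion of $|M_t(A_0)|$ above $n/32$ forces at least $n/(64k)$ consecutive times with $|M_t(A_0)| > n/64$, so the bad event is contained in $\{\sum_t \mathbbm{1}\{|M_t(A_0)|>n/64\} > n/(64k)\}$, and Markov's inequality on this sum gives the uniform bound $O(\gamma k/n)$, which does vanish for all $k = o(n^{1/3})$. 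Once you insert this refinement your argument is complete and matches the paper's.
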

        
        \begin{proof}
            Recall the definition
            \begin{equation*}
                \Xi_1 := \Big\{\sigma : \min\{U(\sigma), u_0 - U(\sigma), V(\sigma), v_0 - V(\sigma)\}\geq \frac{n}{16} \Big\}
            \end{equation*}
            and let $H(t) := \{(X_t, \widetilde{X}_t) \in \Xi_1 \times \Xi_1\}$. The coupling that we suggested makes $(X_t, \widetilde{X}_t)$ remain in $H(t)$ with high probability. Define
            \begin{equation*}
                D := \bigcup_{t\in[t(\gamma), t(3\gamma)]} \{|M_t(A_0)| \geq n/32\} \qquad \textrm{and} \qquad \delta := \sum_{t \in [t(\gamma), t(3\gamma)]} \mathbbm{1}_{M_t(A_0) > n/64}.
            \end{equation*}
            Since $|M_{t+1}(A_0) - M_t(A_0)| \leq k$, if $|M_{t_0}(A_0)| > n/32$ then $|M_t(A_0)| > n/64$ for all $t$ in any interval of length $\frac{n}{64k}$ containing $t_0$. Therefore $D \subset \{\delta > n/64k\}$ and
            \begin{equation*}
                \mathbb{P}_{\sigma, \widetilde{\sigma}}(D) \leq \mathbb{P}_{\sigma, \widetilde{\sigma}}(\delta > n/64k) \leq \frac{ck\mathbb{E}_{\sigma, \widetilde{\sigma}}[\delta]}{n}.
            \end{equation*}
            Since $X_t$ and $\widetilde{X}_t$ has finished the magnetization phase, from Lemma \ref{lem: partial sum estimate}, provided that $t \geq t(\gamma)$, we have
            \begin{equation*}
                \mathbb{P}_{\sigma, \widetilde{\sigma}}(|M_t(A_0)| > n/64) \leq \frac{64\mathbb{E}_{\sigma, \widetilde{\sigma}}[M_t(A_0)]}{n}= O(n^{-1}),
            \end{equation*}
            thus $\mathbb{E}_{\sigma, \widetilde{\sigma}}[\delta] = O(\gamma)$. Therefore, $\mathbb{P}_{\sigma, \widetilde{\sigma}}(D) = O(\gamma k/ n)$.
            Similar procedure for $\widetilde{M}_t(A_0)$ also gives $\mathbb{P}_{\sigma, \widetilde{\sigma}}(\widetilde{D}) = O(\gamma k/n)$. Suppose $U_t \leq n/16$. Then $u_0 - U_t \geq 3n/16$ as we are assuming $u_0 \geq n/4$. This implies
            \begin{equation*}
                |M_t(A_0)| = |U_t - (u_0 - U_t)| \geq (u_0 - U_t) - U_t \geq \frac{n}{8}.
            \end{equation*}
            Similarly we can get the same result from $(u_0 - U_t) \leq n/16$. This argument is also applicable for $V_t$ and $v_0 - V_t$. Therefore,
            \begin{equation*}
                H(t)^c \subset \{|M_t(A_0)| \geq n/16 \} \cup \{|\widetilde{M}_t(A_0)| \geq n/16\}
            \end{equation*}
            and finally,
            \begin{equation*}
                \mathbb{P}_{\sigma, \widetilde{\sigma}} \Big(\bigcup_{t \in [t(\gamma), t(3\gamma)]} H(t)^c\Big) \leq \mathbb{P}_{\sigma, \widetilde{\sigma}}(D) + \mathbb{P}_{\sigma, \widetilde{\sigma}}(\widetilde{D}) = O(\gamma k / n).
            \end{equation*}
            Now consider the event
            \begin{equation*}
                H = \bigcap_{t \in [t(\gamma), t(3\gamma)]} H(t).
            \end{equation*}
            Combining Lemma \ref{lem: supermartingale lemma} and \ref{lem: two-coordinate chain closes} gives
            \begin{equation*}
                \mathbb{P}_{\sigma, \widetilde{\sigma}}\big[\tau_k > t(2\gamma) |X_{t(\gamma)}, \widetilde{X}_{t(\gamma)}\big] \leq \frac{c|R_{t(\gamma)}|}{\sqrt{n\gamma}},
            \end{equation*}
            taking expectation over $X_{t(\gamma)}$ and $\widetilde{X}_{t(\gamma)}$ with Lemma \ref{lem: two-coordinate phase starting point},
            \begin{equation*}
                \mathbb{P}_{\sigma, \widetilde{\sigma}}\big[\tau_k > t(2\gamma)\big] \leq O(\gamma^{-1/2}).
            \end{equation*}
            When the event $\tau_k \leq t_3 + \gamma n/k$ happens, applying the last coupling with Theorem \ref{lem: tau_k to tau_match} yields
            \begin{equation*}
                \mathbb{P}_{\sigma, \widetilde{\sigma}}\big[\tau_\textrm{match} > \tau_k + \gamma n / k \big] \leq O(\gamma^{-1}).
            \end{equation*}
            To sum up, 
            \begin{equation*}
                \mathbb{P}_{\sigma, \widetilde{\sigma}}\big[\tau_\textrm{match} > t(3\gamma)) \leq O(\gamma^{-1/2}\big].
            \end{equation*}
        \end{proof}

        \begin{proof} (of the theorem \ref{thm: main1 upper bound})\\
            From Lemma \ref{lem: good starting points} and Lemma \ref{lem: distance isometry}, we have
            \begin{equation*}
                \begin{split}
                    d(\theta_0 n/k + t) &\leq \max_{\sigma_0 \in \Omega_0} \big\|\mathbb{P}_{\sigma_0}(X_t \in \cdot) - \mu\big\|_{TV} + O(n^{-1}) \\
                    &= \max_{(u, v) \in \Lambda_0} \big\|\mathbb{P}_{(u, v)}((U_t, V_t) \in \cdot) - \pi \big\|_{TV} + O(n^{-1}).
                \end{split}
            \end{equation*}
            
            Start from two chains $(X_t, \widetilde{X}_t)$. Define an event $G := \{ \tau_\textrm{mag} < t(\gamma)\}$, then thanks to the theorem \ref{thm: magnetization matching phase},
            \begin{equation} \label{eq: 3-5}
                \mathbb{P}_{\sigma, \widetilde{\sigma}}(G^c) = O(\gamma^{-1/2}).
            \end{equation}
            From Theorem \ref{two-coordinate chain phase}, we also have
            \begin{equation} \label{eq: 3-6}
                \mathbb{P}_{\sigma, \widetilde{\sigma}}(H^c | G) = O(\gamma k / n)
            \end{equation}
            and
            \begin{equation} \label{eq: 3-7}
                \mathbb{P}_{\sigma, \widetilde{\sigma}}(\tau_\textrm{match} > t(3\gamma) | H, G) \leq O(\gamma^{-1/2}).
            \end{equation}
            Finally, when the event $G$, $H$, $\{\tau_\textrm{match} < t(3\gamma)\}$ happen, Equation \eqref{eq: 3-5}, \eqref{eq: 3-6}, and \eqref{eq: 3-7} give
            \begin{equation*}
                d(t_4) \leq \mathbb{P}_{\sigma, \widetilde{\sigma}}[\tau_\textrm{match} > t_4] + O(n^{-1}) \leq O(\gamma^{-1/2}) + O(\gamma k/n) + O(n^{-1}) \to 0.
            \end{equation*}
            This is possible because we take limit $n \to \infty$ first and then $\gamma \to \infty$.
        \end{proof}
    
\subsection{Mixing time Lower bound for $\beta < 1$}

    \begin{theorem} \label{thm: main1 lower bound}
        Suppose $k = o(\sqrt{n})$. Then
        \begin{equation*}
            \lim_{\gamma \to \infty} \liminf_{n\to\infty} d_n \Big(\frac{n\log n}{2k(1-\beta)} - \frac{\gamma n}{k} \Big) = 1.
        \end{equation*}
    \end{theorem}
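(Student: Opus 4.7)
My plan is to detect non-equilibrium via the magnetization of the chain launched from the all-plus configuration $\mathbbm{1}$. Set $s_t := \mathbb{E}_\mathbbm{1}[S_t]$; by monotone coupling with the stationary measure $\pi$, $s_t$ is non-negative and non-increasing. The heart of the argument is to establish a matching lower bound
$$s_{t_n} \;\geq\; c\, e^{(1-\beta)\gamma}/\sqrt{n}$$
to the upper bound in Lemma \ref{lem: partial sum estimate}(i), where $t_n := [2k(1-\beta)]^{-1}n\log n - \gamma n/k$. Once this is secured, Chebyshev combined with the variance bound $\textrm{Var}_\mathbbm{1}[S_{t_n}] = O(1/n)$ (Lemmas \ref{lem: variance estimate no.1} and \ref{lem: variance estimate no.2}) shows that under $\mathbb{P}_\mathbbm{1}$ the magnetization $S_{t_n}$ concentrates near $s_{t_n}$, while under $\pi$ it concentrates near $0$ on the scale $1/\sqrt{n}$; a halfway threshold separates the two laws.

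For the lower bound on $s_{t_n}$, Lemma \ref{lem: apriori estimate} yields
$$s_{t+1} = \Big(1 - \tfrac{k}{n}\Big) s_t + \tfrac{k}{n}\, \mathbb{E}_\mathbbm{1}[\tanh(\beta S_t)] + O\!\Big(\tfrac{k^2}{n^2}\Big).$$
Using $\tanh(\beta x) \geq \beta x - (\beta x)^3/3$ for $x \geq 0$ together with $|S_t| \leq 1$ and $\textrm{Var}_\mathbbm{1}[S_t] = O(1/n)$, one controls $\mathbb{E}_\mathbbm{1}[\tanh(\beta S_t)] \geq \beta s_t - C(s_t^2 + 1/n)$, producing the refined recursion
$$s_{t+1} \geq \rho\, s_t - \tfrac{Ck}{n}\Big(s_t^2 + \tfrac{1}{n}\Big) - O\!\Big(\tfrac{k^2}{n^2}\Big), \qquad \rho := 1 - \tfrac{k(1-\beta)}{n}.$$
Iterating from $s_0 = 1$, the $s_t^2$ contribution is summable because the matching upper bound already gives $s_t \leq 2\rho^t$, and the remaining additive errors accumulate to $O(k/n) = o(1/\sqrt{n})$ exactly because $k = o(\sqrt{n})$. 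Combined with $\rho^{t_n} = e^{(1-\beta)\gamma}/\sqrt{n}\cdot(1+o(1))$, this delivers the desired lower bound.

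To conclude, define $A := \{\sigma : S(\sigma) \geq \tfrac{c}{2}\, e^{(1-\beta)\gamma}/\sqrt{n}\}$. Chebyshev gives
$$\mathbb{P}_\mathbbm{1}(X_{t_n} \in A) \;\geq\; 1 - \frac{4\,\textrm{Var}_\mathbbm{1}[S_{t_n}]}{s_{t_n}^2} \;=\; 1 - O\!\bigl(e^{-2(1-\beta)\gamma}\bigr),$$
and the analogous estimate under $\pi$ (using $\mathbb{E}_\pi S = 0$ and $\textrm{Var}_\pi S = O(1/n)$, inherited from Lemma \ref{lem: variance estimate no.2} applied to the stationary chain) gives $\pi(A) = O(e^{-2(1-\beta)\gamma})$. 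Hence
$$d_n(t_n) \;\geq\; \mathbb{P}_\mathbbm{1}(X_{t_n} \in A) - \pi(A) \;\to\; 1 \quad \text{as } \gamma \to \infty.$$

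The main obstacle is the recursive analysis of $s_t$: the accumulated contributions of the per-step $O(k^2/n^2)$ slippage in Lemma \ref{lem: apriori estimate} and of the Jensen-type gap between $\mathbb{E}[\tanh(\beta S_t)]$ and $\tanh(\beta s_t)$ must both remain $o(1/\sqrt{n})$ at time $t_n$. The hypothesis $k = o(\sqrt{n})$ is precisely what makes this work; larger $k$ would let the accumulated $O(k^2/n^2)$ errors overwhelm the $1/\sqrt{n}$ signal. A mild nuisance is the initial transient where $s_t = \Theta(1)$ before the recursion enters its asymptotically linear regime, but a short compactness argument on the deterministic map $f(x) := (1-k/n)x + (k/n)\tanh(\beta x)$ shows $s_{T_0} \geq c_0 > 0$ after a fixed number $T_0 = O(n/k)$ of steps, which can be absorbed into the constant $c$ without affecting the asymptotics.
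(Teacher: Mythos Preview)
Your approach mirrors the paper's closely: both track the expected magnetization, establish a lower bound of order $e^{(1-\beta)\gamma}/\sqrt{n}$ at time $t_n$, and finish via Chebyshev using the $O(1/n)$ variance bound. The paper works at the level of intermediate single-site states $Y_i$ and, crucially, launches the chain from a configuration with \emph{small} magnetization $s_0 < (1-\beta)/(3c_5)$ rather than from $\mathbbm{1}$; this guarantees that the accumulated cubic correction is strictly smaller than the leading term $s_0\eta^{i^*}$.

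Your choice $s_0 = 1$ creates a genuine issue that you half-recognize but underplay. Unwinding $s_{t+1} \geq \rho s_t - \tfrac{Ck}{n}s_t^2 - b$ additively and inserting $s_j \leq 2\rho^j$ gives
\[
s_{t_n} \;\geq\; \rho^{t_n}\Bigl(1 - \tfrac{4C}{(1-\beta)\rho}\Bigr) - O(k/n),
\]
and the bracket is negative whenever $\beta$ is close enough to $1$. Your proposed fix---run $T_0 = O(n/k)$ burn-in steps and invoke a ``compactness argument'' on the deterministic map $f$---does not obviously close the gap: a crude lower bound such as $f^{T_0}(1) \geq (1-k/n)^{T_0} \approx e^{-c_1}$ decays faster in $T_0$ than the error term $\tfrac{4C}{1-\beta}\rho^{T_0} \approx \tfrac{4C}{1-\beta}e^{-c_1(1-\beta)}$ you are trying to beat, so you cannot simply take $T_0$ large. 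What does work is either (i) starting from small $s_0$ as the paper does, or (ii) rewriting the recursion multiplicatively as $s_{t+1} \geq s_t\,\rho\bigl(1 - \tfrac{2Ck\rho^{t-1}}{n}\bigr) - b$ and observing that $\prod_{t}\bigl(1 - \tfrac{2Ck\rho^{t-1}}{n}\bigr) \geq \exp\bigl(-\tfrac{2C}{1-\beta} - o(1)\bigr) > 0$, which yields $s_{t_n} \geq c_\beta\,\rho^{t_n} - O(k/n)$ directly from $s_0 = 1$ with no burn-in needed. Once this is in place, the rest of your argument goes through.
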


    \begin{proof}
        It is enough to display a suitable lower bound on the distance between $S_t$ and the stationary distribution, as $(S_t)$ is a projection of $(X_t)$. To be more precise, we consider an intermediate sequence $\{Y_i\}$ of $\{X_t\}$ and then suggest a lower bound for $S(Y_i)$. \\
        Let $i' := i \textrm{(mod k)}$ and $k^+, k^-$ be the number of unavailable vertices at each time $i$. We have $k^+ + k^- = i'$ and $k^+, k^- \geq 0$. The drift of $S(Y_i)$ becomes
        \begin{equation*} 
            \mathbb{E}_{s_0}[S(Y_{i+1}) - S(Y_i) | S(Y_i) = s, k^\pm] = \frac{1}{n}\left\{ \frac{n(1-s) - 2k^-}{n-i'}p_+\left(s + \frac{1}{n}\right) - \frac{n(1+s) - 2k^+}{n-i'}p_-\left(s - \frac{1}{n}\right)\right\}.
        \end{equation*}
        If $s \geq 0$, then the equation is equal to
        \begin{equation*}
            \mathbb{E}_{s_0}[S(Y_{i+1}) - S(Y_i) | S(Y_i) = s] \geq \frac{1}{2n(n-i')}\left\{-2ns - 4k + 2n\tanh{\beta s} - O(1)\right\}.
        \end{equation*}
        Since
        \begin{equation*}
            \frac{1}{n(n-i')} = \frac{1}{n^2} + O\left(\frac{k}{n^3}\right),
        \end{equation*}
        combining with the Taylor expansion of $\tanh$, we have
        \begin{equation*}
            \mathbb{E}_{s_0}[S(Y_{i+1}) - S(Y_i) | S(Y_i) = s] \geq -\frac{(1-\beta)s}{n} - \frac{s^3}{2n} - O\left(\frac{k}{n^2}\right).
        \end{equation*}
        From the symmetry of magnetization or direct calculation, we can check that the equation is true for $s \leq 0$ as well. Hence, when $|S(Y_i)| \geq 2/n$,
        \begin{equation*}
            \mathbb{E}_{s_0}\left[|S(Y_{i+1})| \big| S(Y_i)\right] \geq \left(1 - \frac{1-\beta}{n}\right)|S(Y_i)| - \frac{|S(Y_i)|^3}{2n} - O\left(\frac{k}{n^2}\right).
        \end{equation*}
        This is clearly true when $S(Y_i) = 0$ or $|S(Y_i)| = 1/n$ as well. Define $\eta := 1 - \frac{1-\beta}{n}$ and $Z_i = |S(Y_i)|\eta^{-i}$. For large enough $n$ the inequality becomes
        \begin{equation*}
            \mathbb{E}_{Z_0}[Z_{i+1}] \geq Z_i - \frac{\eta^{-i}[|S(Y_i)|^3 + O(k/n)]}{n}.
        \end{equation*}
        As $|S(Y_i)| \leq 1$,
        \begin{equation} \label{eq: 3-8}
            \mathbb{E}_{Z_0}[Z_i - Z_{i+1}] \leq \frac{\eta^{-i}[|S(Y_i)|^2 + O(k/n)]}{n}.
        \end{equation}
        Now, we want to estimate $\mathbb{E}_{s_0}[|S(Y_i)|^2]$. First of all,
        \begin{equation} \label{eq: 3-9}
            \left(\mathbb{E}_{s_0}[S(Y_i)]\right)^2 = \left| \mathbb{E}_{s_0}[S(Y_{i-i'})] + \mathbb{E}_{s_0}[S(Y_i) - S(Y_{i-i'})]\right|^2
        \end{equation}
        The absolute value of the first term of Equation \eqref{eq: 3-9} is bounded by $2|s_0|(1-k(1-\beta)/n)^{(i-i')/k} \leq 2|s_0|\eta^{i-i'}$. The second term of Equation \eqref{eq: 3-9} satisfies
        \begin{equation*}
            |\mathbb{E}_{s_0}[S(Y_i) - S(Y_{i-i'})]| = |\mathbb{E}_{s_0}[\mathbb{E}_{s_0}[S(Y_i) - S(Y_{i-i'})|Y_{i-i'}]]| \leq \mathbb{E}_{s_0}\left[\frac{c_1k}{n}|S(Y_{i-i'})| + \frac{c_2k^2}{n^2}\right],
        \end{equation*}
        which is bounded by $c_3|s_0|k\eta^{i-i'}/n + O(k^2/n^2)$. For the variance,
        $\textrm{Var}[S(Y_i)] = \textrm{Var}[S(Y_{i-i'})] + \textrm{Var}[S(Y_i)|S(Y_{i-i'})] \leq O(1/n) + O(k/n^2) = O(1/n)$. Hence
        \begin{equation*}
            \begin{split}
                \mathbb{E}_{s_0}[|S(Y_i)|^2] &= \left(\mathbb{E}_{s_0}[S(Y_i)]\right)^2 + \textrm{Var}_{s_0}[S(Y_i)] \leq \left[c_4|s_0|\eta^{i-i'} + O\left(\frac{k^2}{n^2}\right)\right]^2 + O\left(\frac{1}{n}\right) \\
                &\leq c_5|s_0|^2\eta^{2(i-k)} + \frac{c_6k^2|s_0|\eta^{i-k}}{n^2} + O\left(\frac{k^4}{n^4} + \frac{1}{n} \right).
            \end{split}
        \end{equation*}
        Taking expectation on Equation \eqref{eq: 3-8} becomes
        \begin{equation} \label{eq: 3-10}
            \mathbb{E}_{Z_0}[Z_i - Z_{i+1}] \leq \frac{c_5|s_0|^2\eta^{i-2k}}{n} + \frac{c_6k^2|s_0|\eta^{-k}}{n^3} + \eta^{-i}O\left(\frac{k}{n^2}\right).
        \end{equation}
        Let $i^* = n\log n / 2(1-\beta) - \gamma n/(1-\beta)$ and sum up Equation \eqref{eq: 3-10} for all $i = 0, 1, ..., i^*$.
        \begin{equation*}
            s_0 - \mathbb{E}_{s_0}[Z_{i^*}] \leq \frac{c_5|s_0|^2\eta^{-2k}}{n(1-\eta)} + \frac{c_7k^2|s_0|\eta^{-k}\log n}{n^2} + O\left(\frac{k}{\sqrt{n}}\right).
        \end{equation*}
        Because $\eta^{-i^*} \leq n^{1/2}$, $\eta^{-k} \to 1$ as $n \to \infty$, two terms vanish as $n \to \infty$. For large $n$, if we choose $s_0 < \frac{1-\beta}{3c_5}$ then the right side of the equation is less than $s_0/2$. If so,
        \begin{equation*}
            \mathbb{E}_{s_0}[|S(Y_{i^*})] \geq \frac{s_0\eta^{i^*}}{2} \geq \frac{s_0e^\gamma}{2\sqrt{n}} =: A
        \end{equation*}
        As we discussed before, since $\textrm{Var}_{s_0}[S(Y_{i^*})] = O(n^{-1})$,
        \begin{equation*}
            \mathbb{P}_{s_0}(|S(Y_{i^*})| < A/2) \leq \mathbb{P}_{s_0}(|S(Y_{i^*}) - \mathbb{E}_{s_0}S(Y_{i^*})| > A/2) \leq \frac{4\textrm{Var}_{s_0}[S(Y_{t^*})]}{A^2} \leq O(e^{-2\gamma}s^{-2}_0).
        \end{equation*}
        On the other point of view, $\mathbb{E}_\pi[S] = 0$ and $\textrm{Var}_\pi[S] = O(n^{-1})$. Therefore
        \begin{equation*}
            \mathbb{P}_\pi(|S| > A/2) \leq \frac{4\textrm{Var}_\pi[S]}{A^2} = O(e^{-2\gamma}s^{-2}_0).
        \end{equation*}
        Finally, let $\mathcal{A} := [-A/2, A/2]$. Then
        \begin{equation*}
            \|\mathbb{P}_{s_0}(S_{i^*/k} \in \cdot) - \pi \|_\textrm{TV} \geq \pi(\mathcal{A}) - \mathbb{P}_{s_0}(|S_{i^*/k}| \leq A/2) \geq 1 - O(e^{-2\gamma} s^2_0). 
        \end{equation*}
        Taking $\gamma \to \infty$ ensures the total variation distance goes to $1$.
        
    \end{proof}

    \begin{remark*}
        Theorem \ref{thm: main1 lower bound} is only available when $k = o(\sqrt{n})$. Without this condition we can derive another lower bound under $k=o(n)$. This work can be done with generalized Wilson's lemma, which is from \cite{NN18}.
        \begin{equation*} \label{eq: 3-11}
            \lim_{\gamma \to \infty} \liminf_{n\to\infty} d_n \Big(\frac{n\log n}{2k(1-\beta)} - \frac{n\log k}{2k(1-\beta)} - \frac{\gamma n}{k} \Big) = 1.
        \end{equation*}
        However, $\log k = o(\log n)$ condition is required for cutoff when Equation \ref{eq: 3-11} is combined with the upper bound result.
    \end{remark*}
        
\section{Main results: critical temperature regime}

        In this section we show that the mixing time under the randomized systematic scan dynamics is of order $n^{3/2}/k$ when $\beta = 1$. Assume $\beta = 1$ throughout this section.
        
    \subsection{Mixing time Upper bound for $\beta = 1$}    
        \begin{theorem} \label{thm: main2 upper bound}
            Suppose $\beta=1$. If $k = o(\sqrt[4]{n})$ then $t_{mix} = O(n^{3/2}/k)$.
        \end{theorem}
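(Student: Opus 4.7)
The plan is to couple two chains $(X_t, \widetilde{X}_t)$ with $X_0 = \sigma$ arbitrary and $\widetilde{X}_0$ drawn from the stationary measure $\mu$, and show they merge within $O(n^{3/2}/k)$ steps with high probability. The argument splits into three phases paralleling Section~3: (P1) drive each magnetization into the critical window $|S| \leq n^{-1/4}$; (P2) couple the two magnetizations so that $S_t = \widetilde{S}_t$; (P3) given equal magnetizations, match the full configurations via Lemma \ref{lem: magnetization match to exact match}, which takes only $O(n\log n / k) = o(n^{3/2}/k)$ time.

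For (P1), Lemma \ref{lem: apriori estimate} combined with the Taylor expansion $\tanh s = s - s^3/3 + O(s^5)$ at $\beta = 1$ gives
\begin{equation*}
    \mathbb{E}_\sigma[S_1 - S_0] \;=\; -\frac{k}{3n}\, S_0^3 \;+\; O\!\left(\frac{k|S_0|^5}{n} + \frac{k^2}{n^2}\right),
\end{equation*}
so $|S_t|$ has a negative drift of size $k|S_t|^3/n$ whenever $|S_t| \gg (k/n)^{1/3}$. The hypothesis $k = o(n^{1/4})$ guarantees $(k/n)^{1/3} \ll n^{-1/4}$, making the drift informative all the way down to the critical scale. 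Applying an It\^{o}-type expansion to $\phi(s) = s^{-2}$ shows that $\phi(|S_t|)$ is approximately a submartingale with drift of size at least $c k/n$ across the relevant range; equivalently, $W_t := (n^{1/2} - |S_t|^{-2})_+$ is a non-negative supermartingale that reaches zero precisely when $|S_t| \leq n^{-1/4}$. Combined with the variance estimate $\mathrm{Var}_\sigma[S_1] = \Theta(k/n^2)$ from Lemma \ref{lem: variance estimate no.1}, Lemma \ref{lem: supermartingale lemma} then yields $\tau_\mathrm{small} := \min\{t : |S_t| \leq n^{-1/4}\} = O(n^{3/2}/k)$ with high probability, uniformly in $\sigma$.

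For (P2), evolve both chains past $\tau_\mathrm{small} \vee \widetilde{\tau}_\mathrm{small}$ under the monotone coupling of Proposition \ref{prop: hamming distance contraction}; at $\beta = 1$ this no longer contracts Hamming distance, but the signed difference $D_t := S_t - \widetilde{S}_t$ has non-positive drift (the cubic term in fact actively pushes it toward zero whenever either magnetization is not microscopic) and variance $\Theta(k/n^2)$ per step. A supermartingale argument of the same flavor as in the proof of Theorem \ref{thm: magnetization matching phase} drives $|D_t|$ into the window $[0, 4k/n]$ within another $O(n^{3/2}/k)$ steps, after which Lemma \ref{lem: crossing is almost coupling} converts this near-match into an exact magnetization match $S_t = \widetilde{S}_t$ in $O(n/k)$ additional steps. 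Phase (P3) is then immediate.

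The main obstacle is Phase (P1): turning the heuristic drift $-kS^3/(3n)$ into a rigorous supermartingale hitting-time bound despite the $O(k^2/n^2)$ per-step error of Lemma \ref{lem: apriori estimate}. One needs $k^2/n^2 \ll k|S|^3/n$ for $|S|$ all the way down to $n^{-1/4}$, which is exactly the content of the hypothesis $k = o(n^{1/4})$; the cumulative error across $n^{3/2}/k$ steps then amounts to $O(k/\sqrt{n}) = o(1)$ under the same assumption. A secondary technical point is that $|S_t|$ is not Markov in a useful way and the intermediate chain $\{Y_i\}$ is only approximately so, so the drift and variance estimates must be compounded carefully across each scan of $k$ single-site updates using the framework already developed in Section~3 (notably Lemma \ref{lem: crossing is almost coupling}); but this is a bookkeeping issue rather than a new idea.
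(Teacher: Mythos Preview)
Your three-phase outline matches the paper's proof exactly: (P1) drive $|S_t|$ into a small window, (P2) match the two magnetizations via a supermartingale argument on their difference plus Lemma~\ref{lem: crossing is almost coupling}, (P3) invoke Lemma~\ref{lem: magnetization match to exact match}. Phases (P2) and (P3) are essentially the paper's argument; the difference is in (P1).

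For (P1) the paper does \emph{not} transform $|S_t|$. Instead it works with the deterministic sequence $\xi_t := \mathbb{E}_\sigma[|S_t|\,\mathbbm{1}\{\tau_0>t\}]$, derives from Lemma~\ref{lem: apriori estimate} the recursion $\xi_{t+1} \le \xi_t - k\xi_t^3/(4n) + O(k^2/n^2)$, and then runs a dyadic scheme: on each scale $\xi_t\in(b_{i+1},b_i]$ with $b_i=2^{-i}/4$, the recursion forces $\xi_t$ to halve in at most $O(n/(kb_i^2))$ steps, and summing over scales down to $b_{i_0}\sim n^{-1/4}$ gives $O(n^{3/2}/k)$ total. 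Only \emph{after} $\mathbb{E}|S_{r_n}| = O(n^{-1/4})$ is established does the paper apply Lemma~\ref{lem: supermartingale lemma}, and it applies it to the untransformed supermartingale $|S_t|$, where the variance lower bound $\Theta(k/n^2)$ from Lemma~\ref{lem: variance estimate no.1} is uniform.

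Your alternative, applying Lemma~\ref{lem: supermartingale lemma} to $W_t=(n^{1/2}-|S_t|^{-2})_+$, does not work as written. The conditional variance of $W_t$ scales like $|S_t|^{-6}\cdot k/n^2$, which is only $\Theta(k/n^2)$ when $|S_t|=\Theta(1)$; plugging $\sigma^2=k/n^2$ and $W_0\approx n^{1/2}$ into Lemma~\ref{lem: supermartingale lemma} yields the vacuous bound $\mathbb{P}(\tau>u)\lesssim n^{3/2}/\sqrt{ku}$, and moreover the increment bound $B$ blows up to order $kn^{-1/4}$ near the target, so the lemma's side condition $u>4B^2/(3\sigma^2)$ forces $u\gtrsim kn^{3/2}$. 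The $|S|^{-2}$ idea is sound, but the right way to exploit it is to note that $|S_t|^{-2}$ is a \emph{sub}martingale with drift $\ge ck/n$ on $\{t<\tau_{\mathrm{small}}\}$, bound the overshoot $|S_{\tau_{\mathrm{small}}}|^{-2}\le n^{1/2}(1+o(1))$ using $k=o(n^{1/4})$, and conclude $\mathbb{E}[\tau_{\mathrm{small}}]=O(n^{3/2}/k)$ by optional stopping and Markov. That fix would give a genuine alternative to the paper's dyadic argument, arguably slicker; but Lemma~\ref{lem: supermartingale lemma} is not the tool here.

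A smaller point on (P2): at time $\tau_{\mathrm{small}}\vee\widetilde{\tau}_{\mathrm{small}}$ the chain that hit first may already have escaped the window, so you cannot simply say both magnetizations are $O(n^{-1/4})$ there. The paper handles this by tracking the crossing time $\tau_{\mathrm{cross}}$ of the intermediates in parallel: if the chains have already crossed, Lemma~\ref{lem: crossing is almost coupling} finishes; if not, monotonicity keeps the lagging chain sandwiched, so the difference at $\tau_0$ is controlled. You should make this dichotomy explicit rather than assuming both are small simultaneously.
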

        
        \begin{proof}
            The proof consists of two steps. With some positive probability, for any given two chains, their magnetization values eventually agree in time of order $n^{3/2}/k$. After two magnetizations coalesce, two chains can be matched by using the same matching method used in the case of $\beta < 1$. Markov inequality and Lemma \ref{lem: magnetization match to exact match} ensures the exact match can be done in $O(n\log n / k)$ time for the second step. It is enough to show that the magnetizations of two arbitrary chains can be matched in time of order $n^{3/2}/k$. Recall Theorem \ref{lem: apriori estimate}; for any chain $X_t$ we have
            \begin{equation*}
                \Big| \mathbb{E}_\sigma[S_1] - \left(1-\frac{k}{n}\right)S_0 - \frac{k}{n}\tanh \beta S_0 \Big| \leq \frac{2k}{n}\tanh\beta\frac{2k}{n} \leq \frac{4k^2}{n^2}.
            \end{equation*}
            Define $\tau_0 = \min \{t \geq 0 : |S_t| \leq 2\sqrt[3]{k/n}\}$. For any $t < \tau_0$, as $|S_{t+1}-S_t| \leq 2k/n <2\sqrt[3]{k/n} $, the sign of both $S_t$ and $S_{t+1}$ are the same. In this case, we can apply the absolute value on the magnetization:
            \begin{equation*}
                \mathbb{E}_{\sigma}\big[|S_{t+1}| \big| S_t\big] \leq \big(1-\frac{k}{n}\big)|S_0| + \frac{k}{n}\tanh |S_0| + \frac{4k^2}{n^2}.
            \end{equation*}
            Let $\xi_t = \mathbb{E}\big[|S_t| \big| \mathbbm{1}\{\tau_0 > t\}\big]$. Then the above becomes
            \begin{equation*}
                \xi_{t+1} \leq \Big(1-\frac{k}{n}\Big)\xi_t + \frac{k}{n}\tanh\xi_t + \frac{4k^2}{n^2}.
            \end{equation*}
            Thus if $\xi_t > \epsilon$, there exist $c_\epsilon > 0$ which satisfies
            \begin{equation*}
                \xi_{t+1} - \xi_t \leq -\frac{k c_\epsilon}{n},
            \end{equation*}
            provided $k=o(n)$. Therefore in $t_* = O(n/k)$ time we can get $\xi_t \leq 1/4$, for all $t \geq t_*$. \\
            Taylor series of $\tanh$ gives
            \begin{equation*}
                \xi_{t+1} \leq \xi_t - \frac{k\xi^3_t}{4n} + \frac{4k^2}{n^2}
            \end{equation*}
            for $t \geq t_*$. Therefore for some large enough $n$, $\xi_t$ is a decreasing sequence. It is enough to consider the cases in which $n$ is large enough. Consider a decreasing sequence $b_i = (1/4)2^{-i}$ and define $u_i := \min \{t>t_*: \xi_t \leq b_i\}$. We can check that $b_{i+1} < \xi_t \leq b_i$ when $u_i \leq t < u_{i+1}$. Now, for any $t \in (u_i, u_{i+1}]$,
            \begin{equation*}
                \xi_{t+1} \leq \xi_t - \frac{kb^3_i}{32n} + O\Big(\frac{k^2}{n^2}\Big)
            \end{equation*}
            holds, and summing up for all $t \in (u_i, u_{i+1}]$ gives
            \begin{equation*}
                u_{i+1} - u_i \leq \frac{16n}{kb^2_i}[1 + O(b^{-3}_i n^{-1}k)].
            \end{equation*}
            
            Now, let $i_0 = \min\{i : b_i \leq n^{-1/4}\}$. For any $i<i_0$, we have $O(b^{-3}_i n^{-1}k) = o(1)$ with the assumption $k = o(\sqrt[4]{n})$. Therefore, for large $n$ and $0 \leq i < i_0$,
            \begin{equation} \label{eq: 4-1}
                u_{i+1} - u_i \leq \frac{32n}{kb^2_i}.
            \end{equation}
            Summing up Equation \eqref{eq: 4-1} for all $i = 0, ..., i_0$ becomes
            \begin{equation*}
                u_{i_0} - u_0 \leq \sum^{i_0 - 1}_{i=0} \frac{32n}{kb^2_i} \leq \frac{cn}{kb^2_{i_0 - 1}} = O(n^{3/2}k^{-1}).
            \end{equation*}
            Since $u_0 = t_* = O(n/k)$, finally we derive
            \begin{equation*}
                u_{i_0} \leq O\Big(\frac{n^{3/2}}{k}\Big) + O\Big(\frac{n}{k}\Big) = O\Big(\frac{n^{3/2}}{k}\Big).
            \end{equation*}
            The $O(n/k)$ term comes from that $u_0 = t_* = O(n/k)$. Now let $r_n = c_1n^{3/2}k^{-1}$, then
            \begin{equation} \label{eq: 4-2}
                \mathbb{E}_{\sigma}\big[|S_{r_n}|\mathbbm{1}\{\tau_0 > r_n\}\big] = O(n^{-1/4}).
            \end{equation}
            
            Appealing to Lemma \ref{lem: supermartingale lemma} on $|S_t|$ gives
            \begin{equation} \label{eq: 4-3}
                \mathbb{P}_{\sigma}\Big(\tau_0 > r_n + \frac{\gamma n^{3/2}}{k} \Big| X_{r_n}\Big) \leq \frac{C|S_{r_n}|}{\sqrt{\frac{k}{n^2}}\sqrt{\frac{\gamma n^{3/2}}{k}}} = \frac{C|S_{r_n}|}{\sqrt{\gamma}n^{1/4}}
            \end{equation}
            Therefore, multiply $\mathbbm{1}\{\tau_0 > r_n\}$ on Equation \eqref{eq: 4-3} and taking expectation with \label{eq: 4-2} gives
            \begin{equation*}
                \mathbb{P}_{\sigma}\Big(\tau_0 > r_n + \frac{\gamma n^{3/2}}{k}\Big) = O(\gamma^{-1/2}).
            \end{equation*}
            Since $r_n = O(n^{3/2}k^{-1})$, we can reach $\tau_0$ in $O(n^{3/2}k^{-1})$ time with high probability.
            
            Now consider two chains $\{X_t\}$ and $\{\widetilde{X}_t\}$. Without loss of generality assume $\tau_0$ of $\{X_t\}$ is greater than that of $\{\widetilde{X}_t\}$. Define $\tau_\textrm{cross}$ with respect to their intermediates as in Lemma \ref{lem: crossing is almost coupling}. We know that
            \begin{equation*}
                \mathbb{P}_{\sigma, \widetilde{\sigma}}\left(\tau_0 > \frac{\gamma_1 n^{3/2}}{k}\right) \leq O(\gamma_1^{-1/2}).
            \end{equation*}
            In case of $\tau_\textrm{cross} < k\tau_0$ then by Lemma \ref{lem: crossing is almost coupling} we can couple the magnetizations with a positive probability. This proves $\mathbb{P}[\tau_\textrm{mag} \leq \tau_0]$ is uniformly bounded above from $0$.  On the other hand, if $\tau_\textrm{cross} > \tau_0$, that means both $S_{\tau_0}$ and $\widetilde{S}_{\tau_0}$ are in $[-2\sqrt[3]{k/n}, 2\sqrt[3]{k/n}]$ with high probability. Define
            \begin{equation*}
                Z_t := S_t - \widetilde{S}_t + \frac{4k}{n}
            \end{equation*}
            and run two chains $X_t$ and $\widetilde{X}_t$ independently from $\tau_0$. Set a stopping time $\tau_1 = \min\{t\geq \tau_0: Z_t \leq \frac{4k}{n}\}$. From the assumption $\tau_\textrm{cross} > \tau_0$ and $\tau_\textrm{cross}$ for $X_t$ is greater, we have $Z_{\tau_0} > \frac{4k}{n}$. Apply Lemma \ref{lem: supermartingale lemma} on $Z_t$ gives
            \begin{equation*}
                \mathbb{P}_{\sigma, \widetilde{\sigma}}\left[\tau_1 > \frac{n^{3/2}\gamma}{k} + \tau_0 \big| Z_{\tau_0}\right] \lesssim \frac{Z_{\tau_0}}{\sqrt{\frac{k}{n^2}}\sqrt{\frac{n^{3/2}\gamma}{k}}} \lesssim \frac{k^{1/3}}{n^{1/12}\gamma^{1/2}} \lesssim \gamma^{-1/2}.
            \end{equation*}

            However, $k\tau_1 \leq k\tau_\textrm{cross}$ by definition, so we can capture the time $\tau_\textrm{cross}$ and follow the coupling from Lemma \ref{lem: crossing is almost coupling}. In sum up, $\mathbb{P}[\tau_\textrm{mag} > \frac{\gamma n^{3/2}}{k}] < c < 1$. This is enough to show $t_\textrm{mix} = O(n^{3/2}/k)$.
        \end{proof}
        \medskip
        
\subsection{Mixing time Lower bound for $\beta = 1$}
        \begin{theorem} \label{thm: main2 lower bound}
            There exists a constant $c>0$ such that $t_{mix} \geq cn^{3/2}/k$.
        \end{theorem}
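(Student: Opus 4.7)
My plan is to adapt the critical-case lower bound of \cite{LLP10}: start from the all-plus configuration $\sigma = \mathbbm{1}$ and use the magnetization $S_t$ as the distinguishing statistic. At $\beta = 1$ a Laplace-method computation on the Curie--Weiss density $\pi(S = s) \propto e^{-n s^4/12 + O(n s^6)}$ gives $\mathbb{E}_\pi[S^2] = O(n^{-1/2})$, so Markov's inequality yields $\mathbb{P}_\pi[|S| \geq K n^{-1/4}] = O(K^{-2})$ for any $K > 0$. It therefore suffices to exhibit constants $c_0, K > 0$ such that $\mathbb{P}_\mathbbm{1}[S_{t_0} \geq K n^{-1/4}] \geq 3/4$ at $t_0 := c_0 n^{3/2}/k$; this forces $\|\mathbb{P}_\mathbbm{1}(X_{t_0} \in \cdot) - \pi\|_{\mathrm{TV}} > 1/4$.

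To capture the polynomial (rather than exponential) decay of $S_t$ at criticality, I would track $W_t := S_t^{-2}$. Combining Lemma \ref{lem: apriori estimate} at $\beta = 1$ with the elementary inequality $\tanh(s) - s \geq -s^3/3$ for $s \geq 0$, the conditional drift of $S_t$ satisfies
\begin{equation*}
    \mathbb{E}[\Delta_t \mid X_t] \geq -\frac{k}{3n}\, S_t^3 - O(k^2/n^2), \qquad \Delta_t := S_{t+1} - S_t,
\end{equation*}
while Lemma \ref{lem: variance estimate no.1} gives $\mathbb{E}[\Delta_t^2 \mid X_t] = O(k/n^2)$. Applying the second-order Taylor expansion of $\phi(x) = x^{-2}$ (using $\phi'(x) = -2/x^3$ and $\phi''(x) = 6/x^4$, both bounded uniformly in $\xi \in [S_t/2, 3S_t/2]$ once $|\Delta_t| \leq S_t/2$) on the event $\{S_t \geq c_\ast n^{-1/4}\}$ for some fixed small $c_\ast > 0$ yields
\begin{equation*}
    \mathbb{E}[W_{t+1} - W_t \mid X_t] \leq \frac{2k}{3n} + O\!\left(\frac{k^2}{n^2 S_t^3}\right) + O\!\left(\frac{k}{n^2 S_t^4}\right) \leq C_1\,\frac{k}{n}
\end{equation*}
for some $C_1 = C_1(c_\ast)$, provided $k = o(n^{1/4})$ so that the error $k^2/(n^2 S_t^3) = O(k^2/n^{5/4})$ is $o(k/n)$ and $|\Delta_t| \leq 2k/n \ll S_t$ legitimizes the expansion.

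Now set $\tau := \min\{t : S_t \leq c_\ast n^{-1/4}\}$. Summing the drift bound up to $t_0 \wedge \tau$ yields $\mathbb{E}[W_{t_0 \wedge \tau}] \leq 1 + C_1 c_0 \sqrt{n}$ at $t_0 = c_0 n^{3/2}/k$. Whenever $\tau \leq t_0$ we have $W_\tau \geq n^{1/2}/c_\ast^2$, while on $\{\tau > t_0\}$ the bound $W_{t_0} \leq n^{1/2}/K^2$ (for any $K > c_\ast$) forces $S_{t_0} \geq K n^{-1/4}$. Thus $\{S_{t_0} < K n^{-1/4}\} \subset \{W_{t_0 \wedge \tau} > n^{1/2}/K^2\}$, and Markov's inequality gives
\begin{equation*}
    \mathbb{P}_\mathbbm{1}\!\left[S_{t_0} < K n^{-1/4}\right] \leq \frac{K^2 (1 + C_1 c_0 \sqrt{n})}{\sqrt{n}} \leq K^2 C_1 c_0 + o(1).
\end{equation*}
Choose $K$ large enough so that the stationary tail bound is at most $1/8$, then $c_0$ small (depending on $K$) so that $K^2 C_1 c_0 \leq 1/8$; this produces $\mathbb{P}_\mathbbm{1}[S_{t_0} \geq K n^{-1/4}] \geq 3/4 - o(1)$, hence total variation distance $> 1/4$ at $t_0$ and therefore $t_\mathrm{mix}(1/4) \geq c_0 n^{3/2}/k$.

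The main obstacle will be carrying out the Taylor analysis of $\phi(x) = x^{-2}$ carefully near the boundary $x \approx c_\ast n^{-1/4}$: both the quadratic-correction term $\mathbb{E}[\Delta_t^2]/S_t^4$ and the $O(k^2/n^2)$ error from Lemma \ref{lem: apriori estimate} need to be shown $o(k/n)$ after being divided by $S_t^4$ and $S_t^3$ respectively, and it is precisely here that the hypothesis $k = o(n^{1/4})$ from Theorem \ref{thm: main2} gets used in an essential way.
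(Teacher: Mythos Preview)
Your proof is correct and takes a genuinely different route from the paper's. The paper starts the chain at $s_0 = 2An^{-1/4}$ (already at the critical scale), introduces a reflecting barrier at $s_0$ to keep $\widetilde{S}_t \le s_0$, and tracks the \emph{quadratic} functional $Z_t^2 = (s_0 - \widetilde{S}_{t\wedge\tau})^2$; a one-step second-moment computation gives $\mathbb{E}[Z_t^2] \le c_A kt/n^2$, and since $Z_\tau \ge An^{-1/4}$ on $\{\tau\le t\}$, Markov's inequality bounds $\mathbb{P}(\tau \le t)$. By contrast, you start from the extremal state $\mathbbm{1}$ and use the Lyapunov function $W_t = S_t^{-2}$, whose drift you show is $O(k/n)$ uniformly over $S_t \in [c_\ast n^{-1/4},1]$, then apply Markov's inequality to $W_{t_0\wedge\tau}$.

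Your choice of $W_t=S_t^{-2}$ is natural: the deterministic critical flow $\dot{s} = -(k/3n)s^3$ has $s(t)^{-2}$ growing linearly in $t$, so this functional linearizes the polynomial decay. The advantage is that no auxiliary reflected chain is needed and the argument runs directly from the all-plus state. The paper's approach, by confining $\widetilde{S}_t$ to the narrow window $[An^{-1/4},2An^{-1/4}]$, keeps every term in the moment computation at a single fixed scale, so the bookkeeping is simpler (no Taylor expansion of $x^{-2}$, no case analysis over the range of $S_t$). Both approaches implicitly use $k=o(n^{1/4})$ to control the $O(k^2/n^2)$ error in Lemma~\ref{lem: apriori estimate} against the cubic drift at the $n^{-1/4}$ scale. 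One small remark: the variance contribution $\mathrm{Var}(\Delta_t)/S_t^4$ is $\Theta(k/n)$ at the boundary $S_t\sim n^{-1/4}$, not $o(k/n)$; it is absorbed into $C_1$ rather than being negligible, but this does not affect your argument.
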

        
        \begin{proof}
            Since the magnetization chain $S_t$ depends on the original chain, it is enough to prove for the lower bound of the mixing time of $S_t$.
            If we denote $S$ to be a magnetization in equilibrium, then the sequence $n^{1/4}S$ converges to a non-trivial limit law as $n \to \infty$, according to \cite{SG73} and \cite{Ellis66}(See Theorem V.9.5). Therefore, pick $A>0$ such that
            \begin{equation*}
                \mu\Big(|S| \leq An^{-1/4}\Big) \geq 3/4,
            \end{equation*}
            and also let $s_0 := 2An^{-1/4}$. Now define $\widetilde{S_t}$ as a chain with the same transition probability as $S_t$, except $s = s_0$.
            
            Define $Z_t = \widetilde{S}_0 - \widetilde{S}_{t \wedge  \tau}$, where $\tau := \min\{t\geq 0 : \widetilde{S}_t \leq An^{-1/4}\}$. When $An^{-1/4} < \widetilde{S}_t = s' < s_0$, the conditional distribution of $\widetilde{S}_{t+1}$ is the same to that of $S_{t+1}$ in case of $S_t = s'$. Therefore,
            \begin{equation*}
                \mathbb{E}_{s_0}\Big[\widetilde{S}_{t+1} | \widetilde{S}_t = s \Big] = \mathbb{E}_{s_0}\Big[S_{t+1} | S_t = s \Big] \geq s - \frac{c_0ks^3}{n}
            \end{equation*}
            holds for some constant $c_0$. From this, in terms of $Z_t$,
            \begin{equation*}
                \mathbb{E}[Z_{t+1}|\mathcal{F}_t] \leq Z_t + \frac{c_0k}{n} S^3_t.
            \end{equation*}
            Now, consider the second moment of $Z_{t+1}$;
            \begin{equation} \label{eq: 4-4}
                \mathbb{E}_{s_0}\Big[Z^2_{t+1} | \mathcal{F}_t \Big] = \textrm{Var}(Z_{t+1} | \mathcal{F}_t) + \Big(\mathbb{E}_{s_0}[Z_{t+1} | \mathcal{F}_t]\Big)^2.
            \end{equation}
            Lemma \ref{lem: variance estimate no.1} ensures
            \begin{equation*}
                \textrm{Var}(Z_{t+1} | \mathcal{F}_t) \leq O\Big(\frac{k}{n^2}\Big).
            \end{equation*}
            The last term of Equation \eqref{eq: 4-4}, for $t < \tau$, there is another constant $c_1 = c_1(A)$ such that
            \begin{equation*}
                \mathbb{E}^2_{s_0}[Z_{t+1} | \mathcal{F}_t] \leq Z^2_t + 2\frac{c_0 k}{n}Z_t\widetilde{S}^3_t + \frac{c^2_0k^2\widetilde{S}^6_t}{n^2} \leq Z^2_t + \frac{c_1k}{n^2}.
            \end{equation*}
            In conclusion, we can get
            \begin{equation*}
                \mathbb{E}_{s_0}\Big[Z^2_{t+1} - Z^2_t | \mathcal{F}_t\Big] \leq \frac{c_Ak}{n^2}.
            \end{equation*}
            This means $\mathbb{E}_{s_0}[Z^2_t] \leq \frac{c_Akt}{n^2}$, which leads to
            \begin{equation*}
                c_An^{-2}t \leq \mathbb{E}_{s_0}[Z^2_t] \leq \mathbb{E}_{s_0}[Z^2_t\mathbbm{1}_{\{\tau \leq t\}}] \leq \frac{A^2}{\sqrt{n}}\mathbb{P}_{s_0}(\tau \leq t)
            \end{equation*}
            Take $t = (A^2/4c_A)n^{3/2}/k$. Then
            \begin{equation*}
                \mathbb{P}_{s_0}\Big(S_t \leq An^{-1/4}\Big) \leq \frac{1}{4},
            \end{equation*}
            which proves $d(cn^{3/2}/k) \geq 1/2$.
        \end{proof}

\section{Main results: low temperature regime}
    We assume $\beta > 1$ throughout this section. It is well known that the mixing time on low temperature regime is exponential, as shown by the Cheeger constant in \cite{GWL66}. To this end \cite{LLP10} suggested the restricted Glauber dynamics holds only for $\Omega^+ := \{\sigma \in \Omega : S(\sigma) \geq 0\}$. This can be also generalized for the randomized scan dynamics. The \textit{restricted randomized scan dynamics} for low temperature is the following update scheme: for any given $\sigma \in \Omega^+$, generate a candidate $\sigma'$ by $k$ site of updates from $\sigma$ with the usual randomized systematic scan dynamics. If $S(\sigma') \geq 0$ we accept $\sigma'$ for the next state; if $S(\sigma') < 0$ then we accept $-\sigma'$, the state whose spins are all reversed from $\sigma'$, for the next state. Analyzing corresponding magnetization chain takes an important role, so we denote $S^+_t := S(X_t)$ with the plus sign to emphasize we are working under the restricted Glauber dynamics.

\subsection{Mixing time Upper bound for $\beta > 1$}
    \begin{theorem} \label{thm: main3 upper bound}
        Under the restricted Glauber dynamics, $t_\textrm{mix} \leq \frac{c_1n\log n }{ k}$ for some constant $c_1 = c_1(\beta) > 0$.
    \end{theorem}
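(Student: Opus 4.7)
The plan is to couple two arbitrary chains $(X_t),(\widetilde X_t)$ started in $\Omega^+$ within $O(n\log n/k)$ steps with high probability, in phases mirroring the strategy of \cite{LLP10} for single-site restricted Glauber dynamics but adapted to the intermediate-state framework of the preceding sections. Let $s^*>0$ denote the unique positive solution of $s=\tanh(\beta s)$, and set $\alpha := 1-\beta(1-s^{*2})$, which is strictly positive because $s^*$ is the stable fixed point of $s\mapsto\tanh(\beta s)$ when $\beta>1$.

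Phase 1 (burn-in). Lemma~\ref{lem: apriori estimate} gives $\mathbb{E}_\sigma[S_1^+] = (1-k/n)S_0 + (k/n)\tanh(\beta S_0) + O(k^2/n^2)$, and the reflection built into the restricted dynamics only strengthens the drift into $\Omega^+$. Since $\tanh(\beta s)-s$ is bounded away from $0$ on any compact subset of $(0,s^*)\cup(s^*,1]$, a hitting-time argument driven by this drift shows $\tau_\delta := \min\{t:|S_t^+-s^*|\leq\delta\} = O(n/k)$ with high probability for any fixed $\delta>0$.

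Phase 2 (exponential contraction). Taylor-expanding $\tanh(\beta s)$ around $s^*$ and inserting into Lemma~\ref{lem: apriori estimate}, for $S_t^+$ within $\delta$ of $s^*$ one has
\[
\mathbb{E}[S_{t+1}^+ - s^* \mid S_t^+] = \Big(1-\frac{\alpha k}{n}\Big)(S_t^+ - s^*) + O\Big(\frac{k(S_t^+-s^*)^2}{n} + \frac{k^2}{n^2}\Big).
\]
Iterating for $T_1 = \gamma n\log n/k$ steps, the geometric sum of the error terms leaves a residual bias of $O(k/n)$, which under $k=o(\sqrt n)$ is $o(n^{-1/2})$; combined with the variance bound $\mathrm{Var}_\sigma[S_t^+] = O(1/n)$ (from Lemmas~\ref{lem: variance estimate no.1}--\ref{lem: variance estimate no.2}, applied with the local contraction rate) and Chebyshev, we obtain $|S_{T_1}^+ - s^*| = O(n^{-1/2})$ with high probability, and likewise for $\widetilde S_{T_1}^+$.

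Phase 3 (matching and exact coupling). Running the two chains from here under the monotone coupling, $S_t^+ - \widetilde S_t^+$ is an approximate supermartingale with per-step variance of order $k/n^2$. By the same contraction its expectation decays geometrically and reaches $O(1/n)$ within a further $O(n\log n/k)$ steps; an analogue of Lemma~\ref{lem: crossing is almost coupling} then handles intermediate-state mismatches to produce $S_t^+ = \widetilde S_t^+$ with high probability. Once magnetizations coincide, Lemma~\ref{lem: magnetization match to exact match} supplies exact coupling of the configurations themselves in $O(n\log n/k)$ additional steps, and summing the phases yields a total coupling time of $O(n\log n/k)$. The main obstacle is Phase 2: the per-step error $O(k^2/n^2)$ arising from the $k$ intermediate updates must be controlled against the slow contraction rate $\alpha k/n$, and the hypothesis $k=o(\sqrt n)$ is precisely what forces the residual bias $O(k/n)$ to lie below the diffusive scale $n^{-1/2}$. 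A secondary subtlety is applying Lemma~\ref{lem: variance estimate no.2} when the contraction holds only locally near $s^*$, which requires confining the variance analysis to the post-burn-in regime.
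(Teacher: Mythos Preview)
Your Phase~1 is the real gap. The drift $\tanh(\beta s)-s$ vanishes at $s=0$, so the statement that it is ``bounded away from $0$ on any compact subset of $(0,s^*)$'' does not cover the worst starting point $S_0^+=0$, which lies in $\Omega^+$. Near $0$ the drift is only $(k/n)(\beta-1)s+O(k^2/n^2)$, i.e.\ multiplicative rather than uniformly bounded below, and the reflection at $0$ contributes only $O(\sqrt{k}/n)$ per step. Consequently the time to reach any fixed level $s^*-\delta$ from $0$ is $\Theta(n\log n/k)$, not $O(n/k)$: one needs roughly $\log n$ doublings at cost $\Theta(n/k)$ each. More importantly, no argument in your sketch actually \emph{establishes} this bound; a bare drift estimate does not control the hitting time when the drift vanishes at the starting point, and one must rule out long excursions back toward $0$. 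This escape from the unstable fixed point is exactly the hard step in the paper: Lemma~\ref{lem: hitting time estimate from below} and its appendix proof carry out a detailed birth--death hitting-time computation (reducing to Laplace-type integrals of $\exp[n(I(y)-I(x))]$ over a two-dimensional domain) precisely to show $\mathbb{E}_0[\tau_*]=O(n\log n/k)$.

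Beyond this gap, your route and the paper's diverge. The paper does not attempt to drive each chain individually to $s^*\pm O(n^{-1/2})$ and then contract the difference. Instead it runs three chains --- one from $1$, one from $0$, and one stationary copy --- and uses Lemma~\ref{lem: hitting time estimate from above} (easy, downward drift), Lemma~\ref{lem: hitting time estimate from below} (hard, upward hitting past $s^*$), and the CLT for $\mu^+$ to show the top and bottom chains \emph{cross} by time $O(n\log n/k)$ with probability bounded away from $0$; monotonicity then yields $\tau_{\mathrm{mag}}$, and Lemma~\ref{lem: magnetization match to exact match} finishes. Your Phases~2--3 (local linearization at $s^*$, geometric contraction of $|S_t^+-\widetilde S_t^+|$) could in principle replace the crossing argument, but they still presuppose that both chains have reached a macroscopic neighborhood of $s^*$ --- which for the chain started at $0$ is the same unresolved issue. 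The ``secondary subtlety'' you flag (Lemma~\ref{lem: variance estimate no.2} needing global contraction) is also real: you would have to confine the chains to a neighborhood of $s^*$ for $\Theta(n\log n/k)$ steps, which again requires quantitative control of excursions toward $0$.
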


    \begin{lemma} \label{lem: hitting time estimate from above}
        Suppose $k = o(\sqrt{n})$. Define $s^*$ to be the unique positive solution of $\tanh(\beta s) = s$, and for constant $\alpha$ let
        \begin{equation*}
            \tau^* = \tau^*(\alpha) := \inf\Big\{ t \geq 0 : S^+_t \leq s^* + \frac{\alpha}{\sqrt{n}}\Big\}.
        \end{equation*}
        Then, for some suitable $c = c(\alpha, \beta) > 0$,
        \begin{equation*}
            \lim_{n \to \infty}\mathbb{P}_{\sigma}(\tau^* > cn\log n / k) = 0.
        \end{equation*}
    \end{lemma}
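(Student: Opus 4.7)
The plan is to exploit the exponential contraction of the expected magnetization toward the stable fixed point $s^*$ that emerges from Lemma \ref{lem: apriori estimate} in the low-temperature regime. Applying that lemma to the restricted chain (the spin-flip correction is super-polynomially small whenever $S^+_t$ is bounded away from $0$, and in particular throughout the regime relevant before hitting $s^*+\alpha/\sqrt n$), the one-step conditional drift of $S^+_t-s^*$ can be written as
$\mathbb{E}[S^+_{t+1}-s^*\mid S^+_t]=(1-k/n)(S^+_t-s^*)+(k/n)(\tanh\beta S^+_t-\tanh\beta s^*)+O(k^2/n^2)$.
By concavity of $\tanh$ on $[0,\infty)$, the tangent-line inequality gives $\tanh\beta s-\tanh\beta s^*\le\beta(1-(s^*)^2)(s-s^*)$ for every $s\in[0,1]$, so
$\mathbb{E}[S^+_{t+1}-s^*\mid S^+_t]\le(1-\lambda k/n)(S^+_t-s^*)+O(k^2/n^2),$
where $\lambda:=1-\beta(1-(s^*)^2)>0$ precisely because $s^*$ is a stable fixed point of $s\mapsto\tanh\beta s$ when $\beta>1$.

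Iterating this contraction and combining with a variance estimate $\textrm{Var}(S^+_t)=O(1/n)$, derived by adapting the argument of Lemmas \ref{lem: variance estimate no.1}--\ref{lem: variance estimate no.2} with the effective contraction rate $\rho=1-\lambda k/n$, gives the second-moment bound
$\mathbb{E}[(S^+_t-s^*)^2]\le e^{-2\lambda kt/n}+C_0/n$
for some $C_0=C_0(\beta)$. Choosing $t^*=(c/\lambda)(n/k)\log n$ with $c$ sufficiently large makes the exponential term $\le 1/n$, and Chebyshev's inequality yields the uniform bound $\sup_\sigma\mathbb{P}_\sigma(S^+_{t^*}>s^*+\alpha/\sqrt n)\le 2C_0/\alpha^2$. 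Iterating this via the strong Markov property at integer multiples of $t^*$ then produces $\mathbb{P}_\sigma(\tau^*>mt^*)\le(2C_0/\alpha^2)^m$, provided we first enlarge $\alpha$ during a short initial drift phase of length $O(n/k)$ to make the Chebyshev constant strictly less than one.

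The main obstacle is that the per-time Chebyshev bound is a constant independent of $n$, forced by the fact that $\textrm{Var}_\pi(S^+)$ is of order $1/n$ at equilibrium. Converting a constant bound into a probability tending to $0$ naively costs an additional $\log n$ factor in $m$, producing an overall time bound $O((n/k)(\log n)^2)$ instead of the targeted $O((n/k)\log n)$. To close the gap I would refine the single-window argument: either invoke the Gaussian fluctuation theorem for $\sqrt n(S^+-s^*)$ under the restricted Curie--Weiss measure at $\beta>1$ (\cite{SG73}) to obtain a sharp lower bound $q=q(\alpha,\beta)>0$ on the per-window hitting probability that is uniform in $\sigma$, so that a constant number $m=m(\alpha,\beta)$ of Markov-property iterations already yields $(1-q)^m<\varepsilon$; or, alternatively, observe that within a single window of length $t^*$ the process takes of order $\log n$ nearly independent snapshots of $S^+$ at the relaxation timescale $n/(\lambda k)$, and a union-type argument over these snapshots produces the required decay directly.
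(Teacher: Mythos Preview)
Your opening drift estimate is correct and is exactly where the paper starts: with $\gamma^*:=\beta\,\mathrm{sech}^2(\beta s^*)=\beta(1-(s^*)^2)$ one has, for $S^+_t>s^*$,
\[
\mathbb{E}\big[S^+_{t+1}-s^*\mid S^+_t\big]\le\Big(1-\frac{(1-\gamma^*)k}{n}\Big)(S^+_t-s^*)+\frac{4k^2}{n^2},
\]
which is your contraction with $\lambda=1-\gamma^*$.

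The gap is in how you convert this drift into a tail bound on $\tau^*$. Your second-moment/Chebyshev route yields only $\sup_\sigma\mathbb{P}_\sigma(S^+_{t^*}>s^*+\alpha/\sqrt n)\le C_0/\alpha^2$, a constant independent of $n$; iterating through $m$ windows of length $t^*$ gives at best $\limsup_n\mathbb{P}_\sigma(\tau^*>mt^*)\le(1-q)^m$, which is strictly positive for every fixed $m$, so you never obtain $\lim_{n\to\infty}\mathbb{P}_\sigma(\tau^*>c\,n\log n/k)=0$ for a single constant $c$. Your two suggested repairs do not close this: the CLT for $\sqrt n(S^+-s^*)$ describes the stationary law, not the per-window hitting probability uniformly in the starting state; and ``$\log n$ nearly independent snapshots'' is not an argument without a quantitative decorrelation statement, which you have not established.

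The missing idea, and the paper's one-line fix, is to turn the drift inequality directly into an exponentially growing supermartingale and apply optional stopping. Set $\rho:=1-(1-\gamma^*)k/n$ and
\[
Y_t:=\rho^{-t}\Big(S^+_t-s^*-\frac{4k}{(1-\gamma^*)n}\Big).
\]
Your drift bound says $(Y_{t\wedge\tau^*})$ is a supermartingale with $Y_0\le 1$, while on $\{\tau^*>t\}$ one has $S^+_t>s^*+\alpha/\sqrt n$ and hence $Y_t\ge\rho^{-t}(\alpha/\sqrt n-4k/((1-\gamma^*)n))$. Optional stopping then gives
\[
\mathbb{P}_\sigma(\tau^*>t)\le\Big(\frac{\alpha}{\sqrt n}-\frac{4k}{(1-\gamma^*)n}\Big)^{-1}\rho^{\,t},
\]
and plugging $t=c\,n\log n/k$ with any $c>1/(2(1-\gamma^*))$ yields a bound $O(n^{1/2-c(1-\gamma^*)})\to0$. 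No variance estimate, no Markov-property iteration, and no CLT are needed.
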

    \medskip
    
    \begin{proof}
        Start from Theorem \ref{lem: apriori estimate}. In case of $S^+_t > 2k/n$,
        \begin{equation*}
            \mathbb{E}_\sigma[S^+_{t+1} - S^+_t | S^+_t = s] \leq \frac{k}{n}(\tanh(\beta S^+)-S^+) + \frac{4k^2}{n^2}.
        \end{equation*}
        Define $\gamma^* := \beta \cosh^{-2}(\beta s^*)$. By the mean value theorem, for $y > 0$,
        \begin{equation*}
            \tanh(\beta(s^* + y)) - \tanh(\beta s^*) = \frac{\beta}{\cosh^2(s^* + \tilde{y})}y \leq \gamma y.
        \end{equation*}
        Therefore for $y \geq 0$, $\tanh(\beta(s^* + y)) \leq s^* + \gamma^*y$. In conclusion, provided that $S^+_t > (2k/n \vee s^*)$,
        \begin{equation*}
            \mathbb{E}_\sigma[S^+_{t+1} - S^+_t | S^+_t = s] \leq -\frac{k(1-\gamma^*)}{n}(s-s^*) + \frac{4k^2}{n^2}.
        \end{equation*}
        Define
        \begin{equation*}
            Y_t := \Big[1 - \frac{k(1-\gamma^*)}{n}\Big]^{-t}\Big(S^+_t - s^* - \frac{4k}{n(1-\gamma^*)}\Big).
        \end{equation*}
        Take large $n$ so that $(2k/n \vee s^*) = s^*$, then $Y_t$ becomes non-negative supermartingale for $t < \tau^*$ since $s^*$ only depends on $\beta$. From the optional stopping lemma,
        \begin{equation*}
            1 \geq \mathbb{E}_\sigma[Y_{\tau^* \wedge t}] \geq \Big[1 - \frac{k(1-\gamma^*)}{n}\Big]^{-t}\Big(\frac{\alpha}{\sqrt{n}} - \frac{4k}{n(1-\gamma^*)}\Big)\mathbb{P}_\sigma(\tau^* > t).
        \end{equation*}
        Hence
        \begin{equation*}
            \mathbb{P}_\sigma(\tau^* > t) \leq \Big(\frac{\alpha}{\sqrt{n}} - \frac{4k}{n(1-\gamma^*)}\Big)^{-1}\Big[1 - \frac{k(1-\gamma^*)}{n}\Big]^t.
        \end{equation*}
        Plugging in $t = cn\log n / k$ finishes the proof, as $k/n = o(1/\sqrt{n})$.
    \end{proof}
    \medskip
    
    \begin{lemma} \label{lem: hitting time estimate from below}
        Define $s^*$ in the same way as Lemma \ref{lem: hitting time estimate from above}. For any $\alpha>0$, define
        \begin{equation*}
            \tau_* = \tau_*(\alpha) := \min \Big\{t \geq 0 : S^+_t \geq s^* + \frac{\alpha}{\sqrt{n}} \Big\}.
        \end{equation*}
        If $k = o(\sqrt{n})$, then $\mathbb{E}_0[\tau_*] = O(n \log n / k)$.
    \end{lemma}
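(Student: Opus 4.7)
The plan is to bound $\tau_*$ by partitioning the trajectory of $(S^+_t)$ into phases and controlling each via the drift of Lemma~\ref{lem: apriori estimate} together with the variance estimate of Lemma~\ref{lem: variance estimate no.1}. Under the restricted dynamics the drift of $S^+_t$ at state $s$ is $(k/n)(\tanh\beta s - s) + O(k^2/n^2)$, which, since $\beta > 1$, is strictly positive on $(0, s^*)$, pushing $S^+_t$ toward $s^*$. I will bound in turn the time to reach a small constant level, the time to reach the vicinity of $s^*$, and the time to cross $s^*$ by $\alpha/\sqrt{n}$.

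For the escape-and-growth phase I would use a second-moment iteration. Since under restricted dynamics $S^{+2}_{t+1}$ equals the square of the unrestricted one-step magnetization, Lemma~\ref{lem: variance estimate no.1} and Lemma~\ref{lem: apriori estimate} combine to give
\begin{equation*}
    \mathbb{E}[S^{+2}_{t+1} \mid S^+_t] \geq \Bigl(1 + \frac{2(\beta-1)k}{n}\Bigr) S^{+2}_t + \Omega(k/n^2) - O\bigl(S^+_t\, k^2/n^2\bigr)
\end{equation*}
on $\{S^+_t \leq \eta\}$ for a small fixed $\eta > 0$ (chosen so that $\tanh(\beta s) - s \geq (\beta-1)s/2$). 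Iterating from $\mathbb{E}_0[S^{+2}_1] = \Theta(k/n^2)$, with the error term absorbed using $k = o(\sqrt{n})$ together with $\mathbb{E}[S^+_t] \leq \sqrt{\mathbb{E}[S^{+2}_t]}$ (Jensen), one gets $\mathbb{E}_0[S^{+2}_t] \geq \eta^2$ at time $T_1 = O((n/k)\log n)$. A Markov-inequality argument then bounds $\mathbb{P}(S^+_{T_1} \geq \eta/\sqrt 2)$ below by a positive constant, and iterating blocks of length $T_1$ via the strong Markov property yields $\mathbb{E}_0[\tau_\eta] = O(n\log n / k)$, where $\tau_\eta := \inf\{t : S^+_t \geq \eta\}$.

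For the bulk phase $[\eta, s^* - \delta_0]$ the drift is uniformly at least $c_\eta k/n$, so a standard supermartingale argument on $s^* - \delta_0 - S^+_t$ yields expected hitting time of $\{S^+ \geq s^* - \delta_0\}$ of order $O(n/k)$. For the final leg, from $s^* - \delta_0$ to $s^* + \alpha/\sqrt{n}$, near $s^*$ the chain approximates a discrete Ornstein--Uhlenbeck chain with attractive linear drift of coefficient $k(1-\gamma^*)/n$ and per-step variance $\Theta(k/n^2)$; its quasi-equilibrium is centered at $s^*$ with fluctuations of order $1/\sqrt{n}$, so $\{S^+ \geq s^* + \alpha/\sqrt{n}\}$ has positive quasi-equilibrium probability and relaxation time $O(n/k)$, giving expected hitting time $O(n/k)$. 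Summing the phases gives $\mathbb{E}_0[\tau_*] = O(n\log n / k)$.

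The main obstacle is the final crossing: the linear drift vanishes at $s^*$, preventing a single global supermartingale. I would handle it by adapting the proof of Lemma~\ref{lem: hitting time estimate from above} in reverse, constructing a submartingale of the form $\bigl(1 - k(1-\gamma^*)/n\bigr)^{-t}\bigl(s^* + \alpha/\sqrt n - S^+_t\bigr)^+$ on a suitable truncated event, applying optional stopping at $\tau_*$, and combining with a local variance lower bound near $s^*$ (a restricted-dynamics analog of Lemma~\ref{lem: variance estimate no.1}) to extract the required $O(n/k)$ estimate for the crossing.
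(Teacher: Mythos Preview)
Your approach is correct in outline but takes a genuinely different route from the paper. The paper proves Lemma~\ref{lem: hitting time estimate from below} by working on the intermediate chain $M_i = nS^+(Y_i)/2$ and bounding the one-step expected passage times $B_c = \sup\mathbb{E}_c[\tau_{c+1}]$ via a birth--death recursion. This leads to an explicit sum that is rewritten as $n^2\iint \exp[n(I(y)-I(x))]\,dx\,dy$ over a triangle, where $I(x) = -2\beta x^2 + \frac{\lambda k}{n}x + (\tfrac12+x)\log(\tfrac12+x) + (\tfrac12-x)\log(\tfrac12-x) + \log 2$ is essentially a large-deviation rate function; the integral is then controlled by a careful decomposition into fourteen sub-regions (eight pieces, with one piece split further into six), using an auxiliary $\mathcal C^3$ convexity lemma. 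Your phase decomposition (escape via second-moment growth, bulk via uniform drift, crossing via an OU comparison) is more probabilistic and avoids the explicit rate-function computation entirely; it is closer in spirit to how the $\beta<1$ and $\beta=1$ arguments in the paper are organised, and arguably more transparent about where the $\log n$ comes from (the geometric contraction of $|S^+_t - s^*|$, respectively the exponential growth of $\mathbb{E}[S^{+2}_t]$ near $0$).

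One point to tighten: in your final leg the submartingale $\bigl(1-\tfrac{k(1-\gamma^*)}{n}\bigr)^{-t}(s^*+\alpha/\sqrt n - S^+_t)^+$ does not by itself give an \emph{upper} bound on $\mathbb{E}[\tau_*]$ via optional stopping, since submartingale stopping inequalities go the wrong way. What actually works is your OU heuristic made rigorous: first use the contraction $\mathbb{E}[s^*-S^+_{t+1}\mid S^+_t]\le (1-\tfrac{k(1-\gamma^*)}{n})(s^*-S^+_t)+O(k^2/n^2)$ to reach $[s^*-C/\sqrt n,\,s^*]$ in $O((n/k)\log n)$ steps (this is already $\log n$, not $O(n/k)$ as you wrote, though it does not affect the total); then from within $O(1/\sqrt n)$ of $s^*$ use the variance lower bound of Lemma~\ref{lem: variance estimate no.1} together with the fact that the adverse drift above $s^*$ is at most $O(k/n^{3/2})$ to show the chain hits $s^*+\alpha/\sqrt n$ with probability bounded away from zero in a further $O(n/k)$ steps (an exponential Lyapunov function $e^{\lambda\sqrt n (S^+_t-s^*)}$, or a direct gambler's-ruin comparison with drift-to-variance ratio $\Theta(\sqrt n)$, does this). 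Iterating via the strong Markov property then gives $\mathbb{E}_0[\tau_*]=O((n/k)\log n)$.
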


    The proof of Lemma \ref{lem: hitting time estimate from below} is suggested on the appendix. With Lemma \ref{lem: hitting time estimate from above} and Lemma \ref{lem: hitting time estimate from below}, the upper bound can be proven.

    \begin{proof}(Theorem \ref{thm: main3 upper bound})
        First of all, for any arbitrary configurations $\sigma$ and $\widetilde{\sigma}$ in $\Omega^+$ which satisfies $S(\sigma) = S(\widetilde{\sigma})$, we can show that two chains start from these two configurations can be exactly matched with high probability in $O(n\log n / k)$ times. This can be done by applying Lemma \ref{lem: magnetization match to exact match} in a slightly different way. From this fact, it is enough to show that there exists $C>0$ that for any two states $\sigma$ and $\widetilde{\sigma}$,
        \begin{equation}
            \lim_{C \to \infty} \limsup_{n \to \infty}\mathbb{P}_{\sigma, \widetilde{\sigma}}(\tau_\textrm{mag} > Cn\log n / k) < 1,
        \end{equation}
        where $\tau_\textrm{mag}$ is the first time $t$ with $S^+_t = \widetilde{S}^+_t$. \\
        Although the monotone coupling for $X_t$ we have discussed does not make sense under the restricted dynamics, there is still a coupling for two magnetization chains $(S^+_t, \widetilde{S}^+_t)$ which preserves monotonicity between magnetizations; i.e. $(S^+_t - \widetilde{S}^+_t)(S^+_{t+1} - \widetilde{S}^+_{t+1}) \geq 0$. Due to this monotonicity, it is enough to consider two starting points $0$ and $1$(odd $n$ case can be also done in an analogous way). Consider two chains, denoted as $S^+_T$ and $S^+_B$, whose starting positions are $1$ and $0$ respectively. Let $\mu^+$ be the stationary distribution of the restricted magnetization chain and $S^+_\mu$ be a stationary copy of the restricted magnetization, i.e. whose initial distribution is $\mu^+$. Run three chains $S^+_T, S^+_B$, and $S^+_\mu$ independently at the beginning. For some constants $0 < c_1 \leq c_2$, define stopping times as
        \begin{equation*}
            \begin{split}
                \tau_1 &:= \min \Big\{t \geq 0 : S^+_{T, t} \leq s^* + c_1 n^{-1/2} \Big\}, \\
                \tau_2 &:= \min \Big\{t \geq 0 : S^+_{B, t} \geq s^* + c_2 n^{-1/2} \Big\}.
            \end{split}
        \end{equation*}
        Assume $\tau_1 < \tau_2$. If $S^+_{\mu, \tau_1} \geq s^* + c_1 n^{-1/2}$, for $t \geq \tau_1$ we couple $S^+_\mu$ chain and $S^+_T$ chain, while $S^+_B$ runs independently. On the event $S^+_{\mu, \tau_1} < s^* + c_1 n^{-1/2}$, continue running all three chains independently.
        After reaching the time $\tau_2$, if $S^+_{\mu, \tau_2} \leq s^* + c_2 n^{-1/2}$ couple all three chains monotonically. If $S^+_{\mu, \tau_2} > s^* + c_2 n^{-1/2}$, continue running all three chains independently. The other case, $\tau_1 \geq \tau_2$ can be set similarly. \\
        For another constant $c_3 > 0$, define events $H_1, H_2$ as
        \begin{equation*}
            \begin{split}
                H_1 &:= \{\tau_1 \leq c_3 n \log n / k\} \cap \Big\{S_{\mu, \tau_1} \geq s^* + c_1 n^{-1/2}\Big\}, \\
                H_2 &:= \{\tau_2 \leq c_3 n \log n / k\} \cap \Big\{S_{\mu, \tau_2} \leq s^* + c_2n^{-1/2} \Big\}.
            \end{split}
        \end{equation*}
        Then we have
        \begin{equation*}
            \begin{split}
            \mathbb{P}_{\sigma, \widetilde{\sigma}}(H_1^c) &\leq \mathbb{P}_{\sigma, \widetilde{\sigma}}(\tau_1 > c_3 n \log n / k) + \mu^+(0, s^* + c_1 n^{-1/2}) \\
            \mathbb{P}_{\sigma, \widetilde{\sigma}}(H_2^c) &\leq \mathbb{P}_{\sigma, \widetilde{\sigma}}(\tau_2 > c_3 n \log n / k) + \mu^+(s^* + c_2 n^{-1/2}, 1).
            \end{split}
        \end{equation*}
        Observe that on the event $H_1 \cap H_2$ the chains $S_T$ and $S_B$ cross over each other by $c_3 n \log n / k$, therefore
        \begin{equation*}
            \begin{split}
                \mathbb{P}_{\sigma, \widetilde{\sigma}} &\geq 1 - \mathbb{P}_{\sigma, \widetilde{\sigma}}(\tau_1 > c_3 n \log n / k) - \mathbb{P}_{\sigma, \widetilde{\sigma}}(\tau_2 > c_3 n \log n / k) \\
                &\qquad \qquad - \mu^+((s^* + c_1 n^{-1/2}, s^* + c_2 n^{-1/2})^c).
            \end{split}
        \end{equation*}
        $\mu^+$ satisfies a central limit theorem(\cite{ENR80}), therefore the last term $\mu^+((s^* + c_1 n^{-1/2}, s^* + c_2 n^{-1/2})^c)$ is uniformly 
        bounded away from $1$. Further, from the previous two Lemmas \ref{lem: hitting time estimate from above} and \ref{lem: hitting time estimate from below}, we have
        \begin{equation*}
        \lim_{n \to \infty}\mathbb{P}_{\sigma, \widetilde{\sigma}}(\tau_1 > c_3 n \log n / k) = \lim_{n \to \infty}\mathbb{P}_{\sigma, \widetilde{\sigma}}(\tau_2 > c_3 n \log n / k) = 0.
        \end{equation*}
        Therefore $\mathbb{P}_{\sigma, \widetilde{\sigma}}(H_1 \cap H_2)$ is bounded away from $0$, and this means $S^+_T$ and $S^+_B$ cross by the time $c_3 n \log n / k$. As a final step, define
        \begin{equation*}
            \tau_\textrm{loc} = \min\{t \geq 1 : (S^{+T}_{T, t-1} - S^{+T}_{B, t-1})(S^{+T}_{T, t} - S^{+T}_{B, t}) \leq 0\}
        \end{equation*}
        then $\mathbb{P}_{\sigma, \widetilde{\sigma}}(\tau_\textrm{loc} < c_3 \log n) > \epsilon$ uniformly on $n$. From $k^2 = o(n)$ condition, there is a coupling between $S^{+T}_T$ and $S^{+T}_B$ such that
        \begin{equation*}
            \mathbb{P}_{\sigma, \widetilde{\sigma}}\Big(\tau_\textrm{mag} = \left\lceil \frac{\tau_\textrm{loc}}{k}\right\rceil\Big) > \epsilon > 0,
        \end{equation*}
        where $\epsilon$ is independent to $n$.
    \end{proof}

\subsection{Mixing time Lower bound for $\beta > 1$}
    \begin{theorem} \label{thm: main3 lower bound}
        Under the restricted Glauber dynamics, $t_\textrm{mix} \geq (1/4)n\log n / k$.
    \end{theorem}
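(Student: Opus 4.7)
\textbf{Plan (proof of Theorem \ref{thm: main3 lower bound}).} I would use a Wilson-type distinguishing statistic with test function $\Phi(\sigma) := S(\sigma) - s^*$ and initial state $\mathbbm{1}$, the all-plus configuration. The key observation is that $\Phi(\mathbbm{1}) = 1 - s^*$ is of order $1$, whereas under the stationary distribution $\mu^+$ the magnetization concentrates within $O(n^{-1/2})$ of $s^*$ (by the Ellis--Newman CLT \cite{ENR80}), and $\mathbb{E}_\mathbbm{1}[\Phi(X_t)]$ cannot decay faster than exponentially at rate $\Theta(k/n)$ with an effective coefficient strictly less than $1$; this leaves a time window of length $\Theta(n\log n/k)$ during which the two distributions remain separable in mean.

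\emph{Step 1 (drift).} Combining Lemma \ref{lem: apriori estimate} with $\tanh\beta s^* = s^*$ and the mean value theorem applied to $\tanh\beta s$ on $[s^*, 1]$ (where the derivative is bounded below by $\eta^* := \beta\cosh^{-2}\beta \in (0,1)$) yields, for any $\sigma$ with $S(\sigma) \in [s^*, 1]$,
\begin{equation*}
    \mathbb{E}_\sigma[\Phi(X_1)] \geq (1 - \lambda)\Phi(\sigma) - C_\beta\,\tfrac{k^2}{n^2}, \qquad \lambda := \tfrac{(1-\eta^*)k}{n}.
\end{equation*}
Iterating from $\mathbbm{1}$ and summing the geometric error yields $\mathbb{E}_\mathbbm{1}[\Phi(X_{t_0})] \gtrsim n^{-(1-\eta^*)/4}$ at $t_0 := (1/4)n\log n / k$, which is $\gg n^{-1/2}$ because $(1-\eta^*)/4 < 1/2$; the cumulative additive error $O(k/n)$ is absorbed below the $n^{-1/2}$ scale by the hypothesis $k = o(\sqrt{n})$.

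\emph{Step 2 (variance and separation).} The step-variance bound $O(k/n^2)$ from Lemma \ref{lem: variance estimate no.1}, together with the one-step contraction above, plugs into the Wilson recurrence $v_{t+1} \leq (1-\lambda)^2 v_t + O(k/n^2)$ to give $\textrm{Var}_\mathbbm{1}[S^+_{t_0}] = O(1/n)$; the stationary variance is also $O(1/n)$ by the cited CLT. Choosing the threshold $\theta := s^* + \tfrac{1}{2}\mathbb{E}_\mathbbm{1}[\Phi(X_{t_0})]$, Chebyshev applied on both sides yields $\mathbb{P}_\mathbbm{1}[S^+_{t_0} \geq \theta] = 1 - o(1)$ and $\mathbb{P}_{\mu^+}[S^+ \geq \theta] = o(1)$, so the total variation distance at $t_0$ tends to $1$ and in particular exceeds $1/4$ for large $n$, proving $t_\textrm{mix}(1/4) > t_0$.

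\emph{Obstacles.} The restricted dynamics differs from the unrestricted one only on the event that $S^+_t$ would attempt to become negative; starting from $\mathbbm{1}$, the drift contraction of Step 1 combined with the variance bound of Step 2 keeps the magnetization close to $s^* > 0$ throughout $[0, t_0]$ with probability $1 - o(1)$, so Lemma \ref{lem: apriori estimate} may be invoked throughout this window. The more delicate point is the variance argument, because the magnetization projection is not a Markov chain in the sense required by Lemma \ref{lem: variance estimate no.2}; one must instead bootstrap the Wilson iteration directly from the one-step drift and variance estimates, carefully carrying the additive error terms, and this is exactly where the assumption $k = o(\sqrt{n})$ is essential to keep the cumulative $O(k/n)$ error strictly below the Chebyshev scale $n^{-1/2}$ on which the final separation takes place.
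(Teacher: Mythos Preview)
Your Wilson-type approach is genuinely different from the paper's, which uses a coupon-collector argument: couple $X_t$ (started at $\mathbbm{1}$) monotonically with $\widetilde{X}_t$ (started at stationarity $\mu^+$), and let $N_t$ be the number of vertices $v$ with $\widetilde{X}_0(v)=-1$ that have not yet been selected for update by time $t$. Each such vertex still satisfies $X_t(v)=+1$ and $\widetilde{X}_t(v)=-1$, so $S^+_t - \widetilde{S}^+_t \geq 2N_t/n$. At $t^*=(1/4)n\log n/k$ one has $\mathbb{E}[N_{t^*}]\geq c\, n^{3/4}$ and, since the indicators of non-selection are negatively correlated, $\textrm{Var}(N_{t^*})\leq n$; hence $S^+_{t^*}\geq \widetilde{S}^+_{t^*}+c'n^{-1/4}$ with high probability, and the CLT for $\mu^+$ then separates the two laws as in your Step~2. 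This route never analyses the drift of the magnetization chain and therefore sidesteps the issue below.

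Your drift iteration has a gap. The inequality $\tanh\beta s - s^* \geq \eta^*(s-s^*)$ with $\eta^*=\beta\cosh^{-2}\beta$ fails for $s<s^*$ (at $s=0$ it reads $-s^*\geq -\eta^* s^*$, which is false since $\eta^*<1$), so the one-step bound $\mathbb{E}_\sigma[\Phi(X_1)]\geq(1-\lambda)\Phi(\sigma)-O(k^2/n^2)$ holds only on $\{S(\sigma)\geq s^*\}$; you cannot iterate it by taking unconditional expectations without first controlling the event $\{S^+_t\geq s^*\ \textrm{for all}\ t\leq t_0\}$. Monotone domination by the stationary chain does not give this, because $\mathbb{P}_{\mu^+}[S<s^*]$ is bounded away from $0$ by the very CLT you invoke. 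The same obstruction hits your variance recurrence: the one-step map $s\mapsto(1-k/n)s+(k/n)\tanh\beta s$ has derivative exceeding $1$ near $s=0$, so there is no global contraction to feed into the Wilson iteration. Your ``Obstacles'' paragraph addresses the restricted/unrestricted distinction (which concerns $S^+_t<0$) but not this circularity (which concerns $S^+_t<s^*$). A repair is possible---for instance, run the iteration up to $\tau=\min\{t:S^+_t<s^*-\delta\}$ for a fixed small $\delta>0$ and use large deviations for $\mu^+$ together with a union bound over $t_0$ steps to show $\mathbb{P}[\tau\leq t_0]=o(1)$---but as written the argument is incomplete, whereas the paper's coupon-collector route is clean precisely because it never needs to know where $S^+_t$ sits.
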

    
    \begin{proof}
        The proof can be done in the similar fashion as \cite{LLP10}. Again, define $s^*$ to be the unique positive solution of the equation $\tanh(\beta s) = s$. Let $\{X_t\}$ start from $\mathbbm{1}$, and let $\{\widetilde{X}_t\}$ follows the stationary distribution $\mu^+$. Apply the monotone coupling to , and write $\mathbb{P}_{\mathbbm{1}, \mu^+}$ and $\mathbb{E}_{\mathbbm{1}, \mu^+}$ be the probability measure and expectation under this coupling. Define $\mathcal{B}(\sigma)$ be a set of vertices with a minus spin, and $B(\sigma) := |\mathcal{B}(\sigma)|$.
        From the stationary magnetization result of \cite{ENR80}, for some suitable $0 < c_1 < 1$,
        \begin{equation*}
            \mathbb{P}_{\mathbbm{1}, \mu^+}(B(\widetilde{X}_0) \leq c_1 n) = \mu^+(\{\sigma : B(\sigma) \leq c_1 n\}) = o(1).
        \end{equation*}
        Let $N_t$ be the number of the sites in $\mathcal{B}(\widetilde{X}_0)$ that have not been updated until time $t$. Then,
        \begin{equation*}
            \mathbb{E}_{\mathbbm{1}, \mu^+} [N_t | B(\widetilde{X}_0)] = B(\widetilde{X}_0)\left(1-\frac{k}{n}\right)^t.
        \end{equation*}
        Plugging in $t^* := (1/4)n\log n / k$ gives
        \begin{equation*}
            \mathbb{E}_{\mathbbm{1}, \mu^+} [N_{t^*} | B(\widetilde{X}_0)] \geq c_2B(\widetilde{X}_0)n^{-1/4}.
        \end{equation*}
        If we consider $N_t$ as a sum of indicators showing the update status on each vertices, for any $v, w \in \mathcal{B}(\widetilde{X}_0)$,
        \begin{equation*}
            \mathbb{E}_{\mathbbm{1}, \mu^+}[I_v I_w] = (1 - \frac{2k}{n})^t \leq (1 - \frac{k}{n})^{2t} = \mathbb{E}_{\mathbbm{1}, \mu^+}[I_v]\mathbb{E}_{\mathbbm{1}, \mu^+}[I_w],
        \end{equation*}
        so indicators are negatively correlated. This ensures $\textrm{Var}_{\mathbbm{1}, \mu^+}(N_t) \leq n$ at all time. Combining with Chebyshev's inequality, on the event $E_1 := \{B(\widetilde{X}_0) > c_1 n\}$, with some $c_3 > 0$,
        \begin{equation*}
            \mathbb{P}_{\mathbbm{1}, \mu^+}[N_{t^*} \leq c_3n^{3/4} | B(\widetilde{X}_0)] = o(1).
        \end{equation*}
        Therefore,
        \begin{equation*}
            \mathbb{P}_{\mathbbm{1}, \mu^+}[N_{t^*} \leq c_3n^{3/4}] \leq \mathbb{P}_{\mathbbm{1}, \mu^+}(E_1^c) + \mathbb{P}_{\mathbbm{1}, \mu^+}(E_1 \cap \{N_{t^*} \leq c_3n^{3/4}\}) = o(1).
        \end{equation*}
        Now, suppose $N_{t^*} \leq c_3n^{3/4}$. In this case we have $S_{t^*} \geq \widetilde{S}_{t^*} + c_4n^{-1/4}$ for some $c_4 > 0$. Pick a small constant $c_5 \in (0, c_4)$ and define $E_2 := S_{t^*} \leq s^* + c_5n^{-1/4}$. In this case,
        \begin{equation*}
            \begin{split}
            \mathbb{P}_{\mathbbm{1}, \mu^+}(E_2) &\leq \mathbb{P}_{\mathbbm{1}, \mu^+}(N_{t^*} > c_3n^{3/4}) + \mathbb{P}_{\mathbbm{1}, \mu^+}(E_2 \cap \{N_{t^*} \leq c_3n^{3/4}\}) \\
            &\leq o(1) + \mathbb{P}_{\mathbbm{1}, \mu^+}(\widetilde{S}_{t^*} \leq s^* +(c_5 - c_4)n^{-1/4}) \\
            &=o(1),
            \end{split}
        \end{equation*}
        by appealing to the central limit theorem at the last equation. Furthermore the theorem gives 
        \begin{equation*}
            \mu^+\Big(\{\sigma : S(\sigma) > s^* + c_5n^{-1/4}\}\Big) = o(1).
        \end{equation*}
        Finally,
        \begin{equation*}
            d(t^*) \geq \mathbb{P}_{\mathbbm{1}, \mu^+}(\{\sigma : S(\sigma) > s^* + c_5n^{-1/4}\}) - \mu^+(\{\sigma : S(\sigma) > s^* + c_5n^{-1/4}\}) = 1 - o(1).
        \end{equation*}
        Therefore we have $t_\textrm{mix}(n) \geq (1/4)n\log n / k$.
    \end{proof}

    \medskip
    
\section*{Acknowledgement}
    We are sincerely grateful to Evita Nestoridi for introducing the topic and helpful discussions, and also to Seung-Yeon Ryoo for the pivotal contribution to the low-temperature regime calculations including the Appendix.
\newpage

\section{Appendix: proof of Lemma \ref{lem: hitting time estimate from below}}
    In this section we suggest a detailed calculation for Lemma \ref{lem: hitting time estimate from below}. Throughout this section we assume $\beta > 1$, $k = o(n)$, and let $s^*$ be the unique positive solution of $\tanh(\beta s) = s$. This relationship can be modified to
    \begin{equation*}
        \beta = \frac{1}{2s^*}\log\Big(\frac{1 + s^*}{1 - s^*}\Big),
    \end{equation*}
    and Taylor expansion on the right side at $s=0$ gives $\beta > 1 + (s^*)^2/3$.
    
    \begin{proposition} \label{prop: auxiliary function f and I}
    Define
        \begin{equation*}
            I(x) := -2\beta x^2 + \frac{\lambda k}{n}x + \Big(\frac{1}{2} + x\Big)\log\Big(\frac{1}{2} + x\Big) + \Big(\frac{1}{2} - x\Big)\log\Big(\frac{1}{2} - x\Big) + \log 2.
        \end{equation*}
    Then there exists $s_0, s_1$ such that $I'(s_0) = I'(s_1) = 0$, $0 < s_0 = O(k/n)$, and $s^*/2 > s_1 = s^*/2 - O(k/n)$.
    \end{proposition}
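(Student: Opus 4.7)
My approach is to reduce the equation $I'(x) = 0$ to a small perturbation of the mean-field fixed-point equation $\tanh(\beta u) = u$, and then to invoke the implicit function theorem at the two base points $u = 0$ and $u = s^*$.

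First I would differentiate $I$ directly, yielding $I'(x) = -4\beta x + \frac{\lambda k}{n} + \log\frac{1/2 + x}{1/2 - x}$, and then use $\log\frac{1+u}{1-u} = 2\,\mathrm{arctanh}(u)$ to rewrite $I'(x) = 0$, after the substitution $u = 2x$, as
\begin{equation*}
h(u) := \mathrm{arctanh}(u) - \beta u = -\frac{\lambda k}{2n}.
\end{equation*}
The two base roots $h(0) = 0$ and $h(s^*) = 0$ (the latter from the defining equation of $s^*$) give the natural candidates around which to seek perturbed roots $u_0$ and $u_1$. A short computation yields $h'(0) = 1 - \beta$ and
\begin{equation*}
h'(s^*) = \frac{1}{1-(s^*)^2} - \beta = \cosh^2(\beta s^*)\bigl(1 - \gamma^*\bigr),
\end{equation*}
where $\gamma^* = \beta/\cosh^2(\beta s^*)$ is the constant already appearing in Lemma \ref{lem: hitting time estimate from above}. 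Assuming both derivatives are nonzero, the implicit function theorem supplies, for all sufficiently large $n$, perturbed roots $u_0 = \frac{\lambda k/(2n)}{\beta-1} + O(k^2/n^2) = O(k/n)$ and $u_1 = s^* - \frac{\lambda k/(2n)}{h'(s^*)} + O(k^2/n^2) = s^* - O(k/n)$. Setting $s_0 := u_0/2$ and $s_1 := u_1/2$ then delivers the proposition, with the signs $s_0 > 0$ and $s_1 < s^*/2$ coming from $\beta - 1 > 0$ and $h'(s^*) > 0$, respectively.

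The main obstacle is to verify $h'(s^*) > 0$, i.e.\ $\gamma^* < 1$. I would prove this by substituting the identity $\beta = \frac{1}{2s^*}\log\frac{1+s^*}{1-s^*}$ recorded at the opening of the appendix, reducing $\gamma^* < 1$ to the elementary comparison $(1-(s^*)^2)\log\frac{1+s^*}{1-s^*} < 2s^*$. Dividing by $1-(s^*)^2$, this is the inequality $\frac{1}{2}\log\frac{1+a}{1-a} < \frac{a}{1-a^2}$ at $a = s^*$, which follows termwise from the Taylor series $\frac{1}{2}\log\frac{1+a}{1-a} = \sum_{j\geq 0}\frac{a^{2j+1}}{2j+1}$ versus $\frac{a}{1-a^2} = \sum_{j\geq 0} a^{2j+1}$, since $\frac{1}{2j+1} < 1$ for $j \geq 1$. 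Once $h'(s^*) > 0$ is in hand, the existence and the $O(k/n)$ bounds for $s_0$ and $s_1$ are routine first-order expansions.
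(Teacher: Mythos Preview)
Your argument is correct. Both you and the paper locate the critical points by perturbing around the base points $x=0$ and $x=s^*/2$, but the executions differ. The paper first establishes existence by a sign argument on $I$ and $I'$ (noting $I'(0)=I'(s^*/2)=\lambda k/n>0$ while $I(0)=0>I(s^*/2)$, forcing a local max and then a local min inside $(0,s^*/2)$), and only afterwards Taylor-expands $I$ at the endpoints to read off the approximate locations $s_0\sim\frac{\lambda k}{4(\beta-1)n}$ and $s_1\sim\frac{s^*}{2}-\frac{\lambda k}{nI''(s^*/2)}$. Your substitution $u=2x$ recasts $I'(x)=0$ as $h(u):=\mathrm{arctanh}(u)-\beta u=-\lambda k/(2n)$, so that existence and location come out together from a single implicit-function-theorem step at the unperturbed roots $u=0$ and $u=s^*$; this is more economical and makes the connection to the mean-field fixed point transparent. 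One genuine improvement in your version is the explicit verification that $h'(s^*)>0$ (equivalently $I''(s^*/2)>0$, i.e.\ $\gamma^*<1$) via the termwise comparison of $\mathrm{arctanh}(a)$ with $a/(1-a^2)$: the paper uses this sign implicitly in its Taylor-expansion step but never checks it.
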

    
    \begin{proof}
        Since
        \begin{equation*}
            \begin{split}
            I'(x) &= -4\beta x + \log\big(\frac{1}{2} + x\big) - \log\big(\frac{1}{2} - x\big) + \frac{\lambda k}{n} \\
            I''(x) &= -4\beta + \frac{4}{1-4x^2},
            \end{split}
        \end{equation*}
        we have $I'(s^*/2) = I'(0) = \lambda k / n > 0$. Furthermore $I(0) = 0$, $I(s^*/2) < 0$ for large enough $n$ as $k/n \to 0$ and $s^*$ is a fixed value. From this condition we can deduce at least two points in $(0, s^*/2)$ have a horizontal tangential line. Pick the smallest and the largest among them from $(0, s^*/2)$ and call them $s_1$ and $s_2$ respectively.\\
        Taylor expansion of $I$ at $x=0$ becomes
        \begin{equation*}
            I(x) \sim \frac{\lambda k}{n}x - 2(\beta-1)x^2 = -2(\beta-1)\Big(x - \frac{\lambda k}{4(\beta - 1)n}\Big)^2 + O\Big(\frac{k^2}{n^2}\Big).
        \end{equation*}
        and $I$ at $x = s^*/2$ becomes
        \begin{equation*}
            \begin{split}
            I(x) &\sim I(s^*/2) + \frac{\lambda k}{n}\Big(x - \frac{s^*}{2}\Big) + \frac{I''(s^*/2)}{2}\Big(x - \frac{s^*}{2}\Big)^2 \\
            &= I(s^*/2) + \frac{I''(s^*/2)}{2}\Big(x - \frac{s^*}{2} + \frac{\lambda k}{n I''(s^*/2)}\Big)^2 - O\Big(\frac{k^2}{n^2}\Big).
            \end{split}
        \end{equation*}
        Substitute $I''(s^*/2)$ in terms of $s^*$ and $\beta$ then we have
        \begin{equation*}
            s_1 \sim \frac{\lambda k}{4(\beta - 1)n}, \qquad \textrm{and} \qquad s_2 \sim \frac{s^*}{2} - \frac{\lambda k}{nI''(s^*/2)}.
        \end{equation*}
    \end{proof}

    \begin{lemma} \label{prop: C^3 lemma}
    Suppose a function $f : \mathbb{R} \to \mathbb{R} \in \mathcal{C}^3$ and a real number $s_0$ satisfies
        \begin{enumerate}[leftmargin=*]
            \item $f'''(t) \geq 0$ for all $t \geq s_0$.
            \item $f'(t) \leq 0$ for all $t \geq s_0$.
            \item $f'(s_0) = 0$.
        \end{enumerate}
    Then for any $s \leq x \leq y$, $f(y) - f(x) \leq \frac{f'(y)}{2}(y-x)$ holds.
    \end{lemma}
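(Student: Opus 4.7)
The plan is to observe that the three hypotheses amount to one clean structural statement: $f'''(t) \geq 0$ on $[s_0,\infty)$ means $f''$ is nondecreasing, equivalently $f'$ is convex on $[s_0,\infty)$. Once that is in hand, the claim reduces to the standard trapezoidal inequality for a convex function, after which hypothesis (2) (the sign of $f'(x)$) is used to collapse the two-sided trapezoidal bound into the one-sided bound involving only $f'(y)$.

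Concretely, I would argue as follows. Fix $s_0 \leq x \leq y$. By convexity of $f'$ on $[s_0,\infty)$, for every $t \in [x,y]$ the chord estimate
\[
f'(t) \leq \frac{y-t}{y-x}\, f'(x) + \frac{t-x}{y-x}\, f'(y)
\]
holds. Integrating this over $t \in [x,y]$ and using $\int_x^y \frac{y-t}{y-x}\,dt = \int_x^y \frac{t-x}{y-x}\,dt = \frac{y-x}{2}$ gives the trapezoidal bound
\[
f(y) - f(x) \;=\; \int_x^y f'(t)\,dt \;\leq\; \frac{y-x}{2}\bigl(f'(x) + f'(y)\bigr).
\]

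To finish, invoke hypothesis (2): since $x \geq s_0$, we have $f'(x) \leq 0$, and because $y - x \geq 0$ the term $\tfrac{y-x}{2} f'(x)$ is non-positive and can be discarded to obtain
\[
f(y) - f(x) \;\leq\; \frac{f'(y)}{2}(y-x),
\]
which is exactly the claim. I do not expect any real obstacle here; the argument is essentially a one-line consequence of convexity of $f'$ plus a sign check. Note that hypothesis (3), $f'(s_0) = 0$, is not actually needed for this implication itself, which suggests it is included only to ensure that the setup is nondegenerate (so that the critical point $s_0$ is the genuine transition where $f'$ first becomes non-positive), and will be invoked elsewhere in the appendix when this lemma is applied.
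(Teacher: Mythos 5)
Your proof is correct, but it takes a genuinely different route from the paper's. The paper normalizes $s_0=0$ and decomposes the quantity to be shown nonnegative as
$f(x)-f(y)+\tfrac{(y-x)f'(y)}{2}=\int_x^y t\big(\tfrac{f'(y)}{y}-\tfrac{f'(t)}{t}\big)\,dt+f'(y)\tfrac{x^2-xy}{2y}$,
then checks both terms are nonnegative: the second because $f'(y)\le 0$ and $x^2-xy\le 0$, and the first because $t\mapsto f'(t)/t$ is nondecreasing, which is proved from $tf''(t)-f'(t)=\int_0^t\big(f''(t)-f''(s)\big)\,ds\ge 0$; this monotonicity of the anchored difference quotient is exactly where hypotheses (1) and (3) enter (the paper's displayed integrand has its sign flipped relative to what the monotonicity argument actually yields, but the intended reasoning is as just described). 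You instead read (1) as convexity of $f'$ on $[s_0,\infty)$, integrate the chord bound to get the trapezoidal (Hermite--Hadamard) inequality $f(y)-f(x)\le\tfrac{y-x}{2}\big(f'(x)+f'(y)\big)$, and then discard the non-positive term $\tfrac{y-x}{2}f'(x)$ using (2). Your argument is shorter, avoids the normalization and the division by $t$, and, as you correctly observe, never uses hypothesis (3), so it establishes a marginally stronger statement; the paper's argument needs (3) essentially (it anchors the increasing-slope property of $f'$ at $s_0$) and uses (2) only through the sign of $f'(y)$, whereas you use it through the sign of $f'(x)$. Either proof serves equally well for the application to the function $I$ in the appendix.
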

    
    \begin{proof}
        Without loss of generality, set $s = 0$.
        \begin{equation} \label{eq: 8-1}
            \begin{split} 
                f(x) - f(y) + \frac{(y-x)f'(y)}{2} &= -\int_x^y f'(t)dt + \int_x^y \frac{f'(y)}{2}dt \\
                &= \int_x^y t\Big(\frac{f'(t)}{t} - \frac{f'(y)}{y}\Big) dt + f'(y)\frac{x^2 - xy}{2y}.
            \end{split}
        \end{equation}
        Rightmost term of \eqref{eq: 8-1} is non-negative. Furthermore, $tf''(t) - f'(t) = \int_0^t f''(t)ds - \int_0^t f''(s)ds = \int_0^t (f''(t) - f''(s)) ds \geq 0$ ensures that
        \begin{equation*}
            \frac{d}{dt}\Big(\frac{f'(t)}{t}\Big) = \frac{tf''(t) - f'(t)}{t^2} \geq 0,
        \end{equation*}
        which finishes the proof.
    \end{proof}
    
    \begin{remark*}
    The function $I(x)$ defined in Lemma \ref{prop: auxiliary function f and I} satisfies the all the conditions of Lemma \ref{prop: C^3 lemma} with $s = s_1$.
    \end{remark*}

    \begin{proof} (Lemma \ref{lem: hitting time estimate from below}) \\
    Suppose $n$ is even, and consider a new sequence $M_i := nS^+(Y_i)/2$. This is not a Markov chain, but $M_i$ forms an integral sequence in $[0, n/2]$ whose difference between any consecutive terms are in $\{-1, 0, 1\}$. Define $\tau_c := \min \{t \geq 0 : M_i = c\}$ and
    \begin{equation*}
        \begin{split}
        B_c &:= \sup\mathbb{E}_c[\tau_{c+1}] \\
        p_c &:= \sup\mathbb{P}_c[M_{i+1} - M_i = 1 | \mathcal{F}_i] \\
        q_c &:= \sup\mathbb{P}_c[M_{i+1} - M_i = -1 | \mathcal{F}_i] \\
        r_c &:= \sup\mathbb{P}_c[M_{i+1} - M_i = 0 | \mathcal{F}_i].
        \end{split}
    \end{equation*}
    All these four terms depend on $i' := i(\textrm{mod }k)$ and the previous $i - i'$ update history. Therefore, we take supremum over all possible $i'$s and $\mathcal{N}_i$. We can setup an estimate for $B_c$. We know that $B_0 = O(1)$, so for $1 \leq c \leq ns^* + \alpha \sqrt{n}$, for any time $i$
    \begin{equation} \label{eq: 8-2}
        \begin{split}
            \mathbb{E}_c[\tau_{c+1}|\mathcal{F}_i] &\leq p_c + q_c(B_{c-1} + B_c + 1) + r_c(B_c + 1) \\
                &\qquad \longrightarrow (1-q_c-r_c)B_c \leq (p_c + q_c + r_c) + q_cB_{c-1}
        \end{split}
    \end{equation}
    Regardless of $i (\textrm{mod } k)$, we know that
    \begin{equation*}
        \begin{split}
        p_c &= \Big[\frac{1-s}{2} + O\Big(\frac{k}{n}\Big)\Big]p_{+}\Big(s + \frac{1}{n}\Big) = \Big[\frac{n-2c}{2n} + O\Big(\frac{k}{n}\Big)\Big]p_{+}\Big(\frac{1}{n}(2c+1)\Big) \\
        q_c &= \Big[\frac{1+s}{2} + O\Big(\frac{k}{n}\Big)\Big]p_{-}\Big(s - \frac{1}{n}\Big) = \Big[\frac{n+2c}{2n} + O\Big(\frac{k}{n}\Big)\Big]p_{-}\Big(\frac{1}{n}(2c-1)\Big),
        \end{split}
    \end{equation*}
    hence Equation \eqref{eq: 8-2} becomes
    \begin{equation} \label{eq: 8-3}
        \begin{split}
        B_c &\leq A + \Big[\frac{n+2c}{n-2c} + O\Big(\frac{k}{n}\Big)\Big)\Big]\frac{p_{-}\Big(\frac{1}{n}(2c-1)\Big)}{p_{+}\Big(\frac{1}{n}(2c+1)\Big)}B_{c-1} \\
        &\leq A + \Big[\frac{n+2c}{n-2c} + O\Big(\frac{k}{n}\Big)\Big)\Big]\Big(1 + O\Big(\frac{1}{n}\Big)\Big)e^{-\frac{2\beta}{n}(2c+1)}B_{c-1} \\
        &\leq A + \Big[\frac{n+2c}{n-2c} + \frac{\lambda k}{n}\Big]e^{-\frac{2\beta}{n}(2c+1)}B_{c-1}
        \end{split}
    \end{equation}
    for some positive constant $A$ and $\lambda$. 
    We know $B_1 = O(1)$, so repeating Equation \eqref{eq: 8-3} gives
    \begin{equation} \label{eq: 8-4}
        \begin{split}
        B_c \lesssim \sum_{j = 1}^c e^{-\frac{2\beta}{n}(c^2 - j^2)}\frac{n+2c}{n-2c}...\frac{n+2j}{n-2j}e^{(c - j)\frac{\lambda k}{n}}.
        \end{split}
    \end{equation}
    For any $1\leq j \leq c \leq ns^* + \alpha\sqrt{n}$,
    \begin{equation*}
        \begin{split}
        \frac{n + 2c}{n - 2c} ... \frac{n + 2j}{n - 2j} &= 
        \frac{(n/2 + c)!/(n/2 + j - 1)!}{(n/2 - j)!/(n/2 - c - 1)!} = \frac{(n/2 + c)!(n/2 - c - 1)!}{(n/2 - j)!(n/2 + j-1)!} \\
        &\simeq \frac{(n/2 + c)!(n/2 - c)!}{(n/2 - j)!(n/2 + j)!} \simeq \frac{(n/2 + c)^{n/2 + c}(n/2 - c)^{n/2 - c}}{(n/2 - j)^{n/2 - j}(n/2 + j)^{n/2 + j}} \\
        &= \exp\Big(nf(c/n) - nf(j/n)\Big),
        \end{split}
    \end{equation*}
    where $f(x) = \left(\frac{1}{2} + x\right)\log\left(\frac{1}{2} + x\right) + \left(\frac{1}{2} - x\right)\log\left(\frac{1}{2} - x\right) + \log 2$. Hence \eqref{eq: 8-4} becomes
    \begin{equation*}
        \begin{split}
        B_c &\lesssim \sum_{j=1}^c \exp\Big\{-\frac{2\beta}{n}(c^2 - j^2) + n\big[f\big(\frac{c}{n}\big) - f\big(\frac{j}{n}\big)\big] + (c-j)\frac{\lambda k}{n}\Big\} \\
        &= \sum_{j=1}^c \exp\Big\{nI\big(\frac{c}{n}\big) - nI\big(\frac{j}{n}\big)\Big\},
        \end{split}
    \end{equation*}
    where $I$ is from Proposition \ref{prop: auxiliary function f and I}. Eventually we need to bound $\sum B_c$, which is
    \begin{equation*}
        \begin{split}
        \sum_{i = 1}^{(ns^* + \alpha\sqrt{n})/2} B_i &\leq \sum_{i = 1}^{(ns^* + \alpha\sqrt{n})/2}\sum_{j = 1}^{i} \exp\Big\{nI\big(\frac{i}{n}\big) - nI\big(\frac{j}{n}\big)\Big\} \\
        &\simeq n^2 \int_0^{\frac{s^*}{2} + \frac{\alpha}{2\sqrt{n}}}\int_0^y \exp[nI(y) - nI(x)] dx dy.
        \end{split}
    \end{equation*}
    The problem changed to calculate the integral over triangular area $\{0 \leq y \leq s^*/2 + \alpha/2\sqrt{n}, 0 \leq x \leq y\}$. We split the domain into by 8 pieces(Figure \ref{fig: domain_division_1}).
    
    \begin{figure}[h]
        \includegraphics[scale = 0.4]{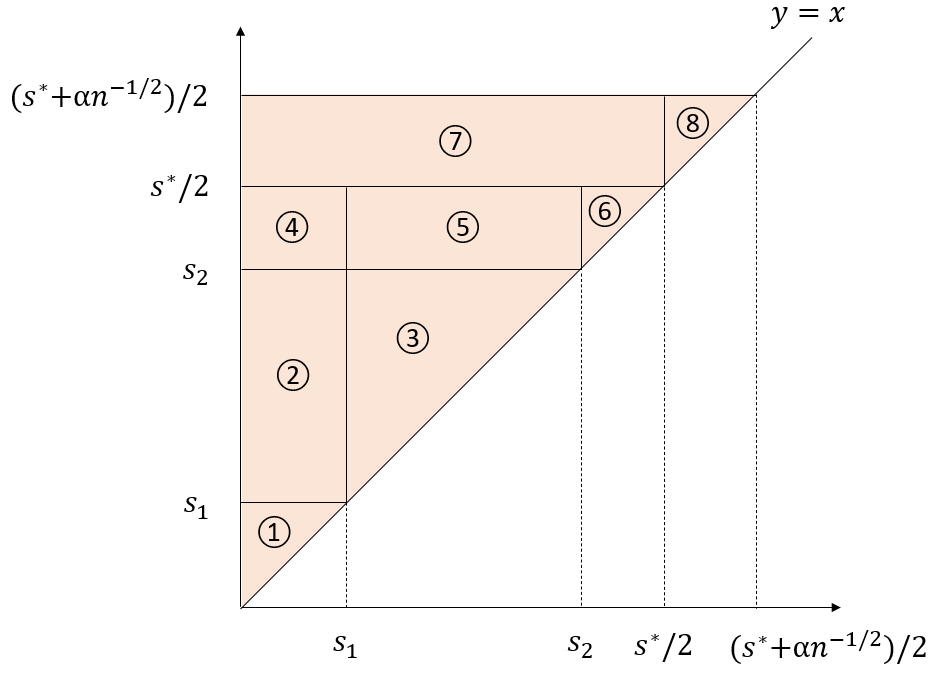}
        \centering
        \caption{A graphical domain division by 8 pieces.}
        \label{fig: domain_division_1}
    \end{figure}
    
    \medskip
    
    \noindent \textbf{Piece 1}: $\{0 \leq y \leq s_1, 0 \leq x \leq y\}$ \\
    For any $x \leq y$, we have
    \begin{equation*}
        |I(y) - I(x)| \leq (y-x)\sup_{x<s<y} I'(s) \leq (y-x)\sup_{0<s<s_1}I'(s) = (y-x)O\Big(\frac{k}{n}\Big) = O\Big(\frac{k^2}{n^2}\Big),
    \end{equation*}
    therefore the integration over this domain becomes $n^2 s_1^2 \exp\left(O\left(\frac{k^2}{n}\right)\right) = O(k^2)$.
    \medskip
    
    \noindent \textbf{Piece 2}: $\{s_1 \leq y \leq s_2, 0 \leq x \leq s_1\}$ \\
    \begin{equation*}
        \begin{split}
            &n^2\int_0^{s_1} \exp[-nI(x)] dx \int_{s_1}^{s_2} \exp[nI(y)] dy \\
            &\qquad \qquad \leq n^2s_1 \exp[-nI(0)] \int_{s_1}^{s_2} \exp[nI(y)]dy \\
            &\qquad \qquad \leq n^2s_1 \exp[nI(s_1)-nI(0)] \int_{s_1}^{s_2} \exp[nI(y)-nI(s_1)]dy \\
            &\qquad \qquad = O(kn) \int_{s_1}^{s_2} \exp[nI(y)-nI(s_1)]dy.
        \end{split}
    \end{equation*}
    Around $y = s_1$ we have $nI(y) - nI(s_1) \simeq -2n(\beta-1)(y-s_1)^2$. If $y$ is far from $s_1$, since $I$ is a decreasing function on $(s_1, s_2)$, $nI(y) - nI(s_1)$ rapidly becomes small. Splitting the domain of integration into $(s_1, s_1 + \log n / n)$ and $(s_1 + \log n / n, s_2)$ gives
    \begin{equation*}
        O(kn)\int_{s_1}^{s_2} \exp[nI(y)-nI(s_1)] dy = O(kn)O\left(\frac{\log n}{\sqrt{n}}\right) = O(k\sqrt{n}\log n).
    \end{equation*}
    \medskip
    
    \noindent \textbf{Piece 3}: $\{s_1 \leq y \leq s_2, s_1 \leq x \leq y\}$ \\
    This domain should be divided into 6 pieces again. This continues at the end of the proof.
    \medskip
    
    \noindent \textbf{Piece 4}: $\{s_2 \leq y \leq s^*/2, 0 \leq x \leq s_1\}$ \\
    Since $I$ is increasing on both $(0, s_1)$ and $(s_2, s^*/2)$,
    \begin{equation*}
        n^2 \int_0^{s_1}\exp[-nI(x)]dx \int_{s_2}^{s^*/2}\exp[nI(y)]dy \leq n^2 s_1 \exp[-nI(0)]\Big(\frac{s^*}{2} - s_2\Big)\exp[nI(s^*/2)] = O(k^2).
    \end{equation*}
    \medskip
    
    \noindent \textbf{Piece 5}: $\{s_2 \leq y \leq s^*/2, s_1 \leq x \leq s_2\}$ \\
    Similar to Piece 2.
    \begin{equation*}
        \begin{split}
        &n^2 \int_{s_1}^{s_2}\exp[-nI(x)]dx \int_{s_2}^{s^*/2}\exp[nI(y)]dy \\
        &\qquad \qquad \leq n^2 \int_{s_1}^{s_2}\exp[-nI(x)]dx \left(\frac{s^*}{2} - s_2\right)\exp[nI(s^*/2)]\\
        &\qquad \qquad = n^2 \int_{s_1}^{s_2}\exp[nI(s_2)-nI(x)]dx \left(\frac{s^*}{2} - s_2\right) \exp\left[ nI(s^*/2) - nI(s_2)\right] \\
        &\qquad \qquad \lesssim n^2 \int_{s_1}^{s_2}\exp[nI(s_2)-nI(x)]dx \left(\frac{s^*}{2} - s_2\right) \\
        &\qquad \qquad = n^2 O(\frac{\log n}{\sqrt{n}})O\Big(\frac{k}{n}\Big) = O(k\sqrt{n}\log n).
        \end{split}
    \end{equation*}
    \medskip
    
    \noindent \textbf{Piece 6}: $\{s_2 \leq y \leq s^*/2, s_2 \leq x \leq s^*/2\}$ \\
    Similar to Piece 1.
    \begin{equation*}
        n^2 \iint \exp[nI(y)-nI(x)]dydx \lesssim n^2\Big(\frac{s^*}{2} - s_2\Big)^2 = O(k^2).
    \end{equation*}
    \medskip
    
    \noindent \textbf{Piece 7}: $\{s^*/2 \leq y \leq s^*/2 + \alpha/2\sqrt{n}, 0 \leq x \leq s^*/2\}$ \\
    Similar to Piece 4, 5, 6. The integral is
    \begin{equation*}
        n^2 \int_{s^*/2}^{s^*/2 + \alpha/2\sqrt{n}}\exp[nI(y)]dy\int_0^{s^*/2}\exp[-nI(x)]dx.
    \end{equation*}
    We have $I(\frac{s^*}{2}) < 0$. Taylor expansion at $\frac{s^*}{2}$ for large enough $n$ ensures $I\left(\frac{s^*}{2}+ \frac{\alpha}{2\sqrt{n}}\right) < 0$. Therefore
    \begin{equation*}
        n^2 \int_{s^*/2}^{s^*/2 + \alpha/2\sqrt{n}}\exp[nI(y)]dy \lesssim n^{3/2}.
    \end{equation*}
    Now, split the range of $x$ by $[0, s_1] \cup [s_1, s_2] \cup [s_2, s^*/2]$. Then each integral with respect to $x$ gives $O(k/n), O(\log n/\sqrt{n})$ and $ O(k/n)$. In conclusion,
    \begin{equation*}
        n^2 \iint \exp[nI(y)-nI(x)]dydx = O(k\sqrt{n} + n\log n) = O(n\log n).
    \end{equation*}

    \noindent \textbf{Piece 8}: $\{s^*/2 \leq y \leq s^*/2 + \alpha/2\sqrt{n}, s^*/2 \leq x \leq s^*/2 + \alpha/2\sqrt{n}\}$ \\
    Similar to Piece 1.
    \begin{equation*}
        n^2 \iint \exp[nI(y)-nI(x)]dydx \lesssim n^2\Big(\frac{\alpha}{2\sqrt{n}}\Big)^2 = O(n).
    \end{equation*}
    \medskip
    
    \noindent We divide \textbf{Piece 3}: $\{s_1 \leq y \leq s_2, s_1 \leq x \leq y\}$ again into 6 pieces with a parameter $0 < \epsilon = o(1)$. $\epsilon$ will be chosen after the calculation(Figure \ref{fig: domain_division_2}).

    \begin{figure}[h]
        \includegraphics[scale = 0.4]{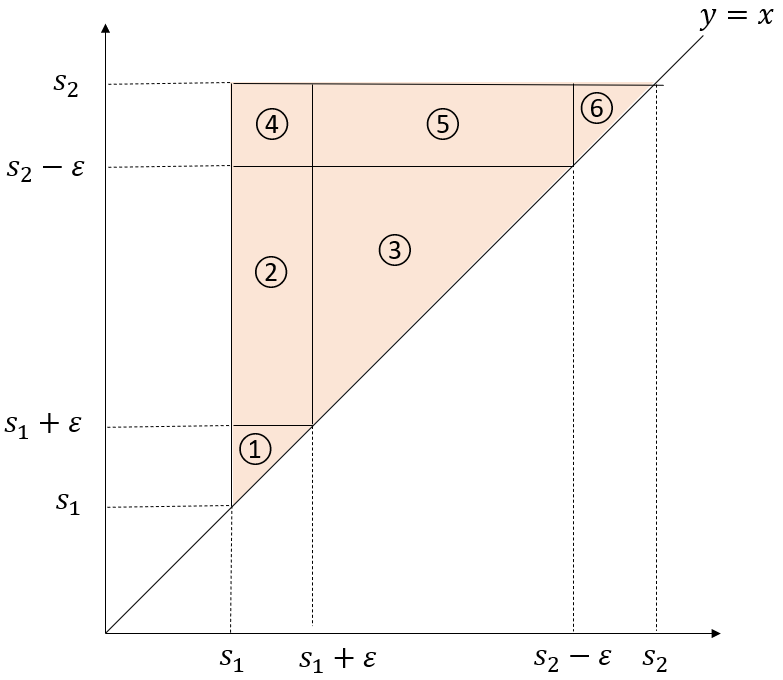}
        \centering
        \caption{A graphical domain division for Piece 3 from Figure \ref{fig: domain_division_1}, by 6 pieces.}
        \label{fig: domain_division_2}
    \end{figure}
    
    \noindent \textbf{Piece 3-1} : $\{s_1 \leq y \leq s_1 + \epsilon, s_1 \leq x \leq y \}$ \\
    For small enough $\epsilon>0$, $s_1 \leq x \leq y \leq s_1 + \epsilon$ implies $I(x) \geq I(y)$. Therefore,
    \begin{equation*}
        n^2 \int_{s_1}^{s_1 + \epsilon} \int_{s_1}^y \exp[nI(y)-nI(x)]dxdy \lesssim n^2\epsilon^2.
    \end{equation*}
    
    \noindent \textbf{Piece 3-2} : $\{s_1 + \epsilon \leq y \leq s_2 - \epsilon, s_1 \leq x \leq s_1 + \epsilon \}$ \\
    \begin{equation*}
        n^2\int_{s_1 + \epsilon}^{s_2 - \epsilon}\int_{s_1}^{s_1 + \epsilon} \exp[nI(y)-nI(x)] dx dy \leq n^2 \epsilon \int_{s_1 + \epsilon}^{s_2 - \epsilon}\exp[nI(y) - nI(s_1 + \epsilon)] dy
    \end{equation*}
    Near $y = s_1 + \epsilon$, the approximation $nI(y) - nI(s_1 + \epsilon) \simeq -2n(\beta-1)\{(y-s_1)^2 - \epsilon^2\}$ holds. $I(y)$ is uniformly bounded above by $I(s_1 + \epsilon)$ when $y$ is far from $s_1 + \epsilon$. Hence the above is bounded by $O(n^2\epsilon)$, provided that $n\epsilon \to \infty$.

    \noindent \textbf{Piece 3-3} : $\{s_1 + \epsilon \leq y \leq s_2 - \epsilon, s_1 + \epsilon \leq x \leq y \}$ \\
    Proposition \ref{prop: C^3 lemma} can be applied to the function $I$ in this domain.
    \begin{equation*}
        \begin{split}
            &n^2\int_{s_1 + \epsilon}^{s_2 - \epsilon}\int_{s_1 + \epsilon}^{y} \exp[nI(y)-nI(x)] dx dy \\
            &\qquad \qquad \leq n^2\int_{s_1 + \epsilon}^{s_2 - \epsilon}\int_{s_1 + \epsilon}^{y} \exp[nI'(y)(y-x)/2] dx dy \\
            &\qquad \qquad = n^2 \int_{s_1 + \epsilon}^{s_2 - \epsilon} -\frac{2}{nI'(y)}\Big\{1 - \exp\Big[nI'(y)\big(\frac{y}{2} - \frac{s_1 + \epsilon}{2}\big)\Big]\Big\} dy \\
            &\qquad \qquad \leq n^2 \int_{s_1 + \epsilon}^{s_2 - \epsilon} -\frac{2}{nI'(y)}dy
        \end{split}
    \end{equation*}
    Near $y = s_1 + \epsilon$ we have $I'(y) \simeq 4(\beta - 1)(y-s_1)$, while near $y = s_2 - \epsilon$ we have $I'(y) \simeq O(1)(s_2 - y)$. For $y$ in between $s_1 + \epsilon$ and $s_2 - \epsilon$, $|I'(y)|$ is bounded below by a positive constant. Therefore,
    \begin{equation*}
        n^2 \int_{s_1 + \epsilon}^{s_2 - \epsilon} -\frac{2}{nI'(y)}dy \lesssim n O(\log(1/\epsilon)) + O(n) =O(-n\log\epsilon) + O(n).
    \end{equation*}

    \noindent \textbf{Piece 3-4} : $\{s_2 - \epsilon \leq y \leq s_2, s_1 \leq x \leq s_1 + \epsilon \}$ \\
    Similar to Piece 3-1. In this domain we have $I(x) \geq I(y)$ for $x \leq y$. Therefore
    \begin{equation*}
        n^2 \iint \exp[nI(y)-nI(x)]dydx \lesssim n^2\epsilon^2.
    \end{equation*}

    \noindent \textbf{Piece 3-5} : $\{s_2 - \epsilon \leq y \leq s_2, s_1 + \epsilon \leq x \leq s_2 - \epsilon \}$ \\
    Similar to Piece 3-2.
    \begin{equation*}
            n^2\int_{s_2 - \epsilon}^{s_2}\int_{s_1 + \epsilon}^{s_2 - \epsilon} \exp[nI(y)-nI(x)] dx dy = n^2 \epsilon \int_{s_1 + \epsilon}^{s_2 - \epsilon} \exp[nI(s_2 - \epsilon)-nI(x)] dx.
    \end{equation*}
    Near $x = s_2 - \epsilon$, the approximation $nI(s_2 - \epsilon) - nI(x) \simeq O(n)\epsilon(x - s_2 + \epsilon)$ holds. Hence the above is bounded by $O(n^2\epsilon)$ in an analogous way.
    
    \noindent \textbf{Piece 3-6} : $\{s_2 - \epsilon \leq y \leq s_2, s_2 - \epsilon \leq x \leq y \}$ \\
    Similar to Piece 3-1, $I(x) \geq I(y)$ for $x \leq y$ implies
     \begin{equation*}
        n^2 \iint \exp[nI(y)-nI(x)]dydx \lesssim n^2\epsilon^2.
    \end{equation*}
    
    Now set $\epsilon = O(\log n / n)$. Under $k = o(\sqrt{n})$ assumption, all the integrals over the domains are $O(n\log n)$, which ensures that
    \begin{equation*}
        \mathbb{E}_0\Big[\min \big\{i \geq 0 : S^+(Y_i) \geq s^* + \frac{\alpha}{\sqrt{n}}\big\}\Big] = O(n\log n).
    \end{equation*}
    As this is true for any $\alpha > 0$, after you reach the stopping time
    \begin{equation*}
        \tau_{**} := \Big\{i \geq 0 : S^+(Y_i) \geq s^* + \frac{\alpha}{\sqrt{n}}\Big\},
    \end{equation*}
    Consider the time $\lceil \tau_{**}/k \rceil$. We have 
    \begin{equation*}
        S^+(X_{\lceil \tau_{**}/k \rceil}) = S^+(Y_{k\lceil \tau_{**}/k \rceil}) \geq s^* + \frac{\alpha}{\sqrt{n}} - \frac{2k}{n}
    \end{equation*}
    From the condition $k = o(\sqrt{n})$, adjusting $\alpha$ gives
    \begin{equation*}
        \lceil \frac{\tau_{**}}{k}\rceil \sim \tau_* := \min \big\{t \geq 0 : S^T_t \geq s^* + \frac{\alpha}{\sqrt{n}}\big\}.
    \end{equation*}
    Therefore, $\mathbb{E}_0[\tau_*] = O(n\log n / k)$.
    \end{proof}
    \medskip

    \bibliographystyle{alpha}
    \bibliography{cutoffbl}

\begin{thebibliography}{LPW17}
    
    \bibitem[LLP10]{LLP10}
        Levin, D. A., Luczak M. J., and Peres, Y.
        \newblock Glauber dynamics for the mean-field Ising model: cut-off, critical power law, and metastability.
        \newblock {\em Prob. Theory and Related Fields}, 146:223--265, 2010.
    
    \bibitem[DLY09]{DLY09}
        Ding, J., Lubetzky, E., and Peres, Y.
        \newblock The mixing time evolution of Glauber dynamics for the mean-field Ising model.
        \newblock {\em Comm. in Math. Phys.}, 289(2):725--764, 2009
    
    \bibitem[DLP09]{DLP09}
        Ding, J., Lubetzky, E., and Peres, Y.
        \newblock Censored Glauber Dynamics for the Mean Field Ising Model.
        \newblock {\em J of Stat. Phys.}, 137:407--458, 2009
    
    \bibitem[LS12]{LS12}
        Lubetzky, E., and Sly, A.
        \newblock Cutoff for the Ising model on the lattice.
        \newblock {\em Inv. Math.}, 191:719--755, 2013
    
    \bibitem[LS14]{LS14}
        Lubetzky, E., and Sly, A.
        \newblock Universality of cutoff for the Ising model.
        \newblock {\em Ann. of Prob.}, 45(6A):3664--3696, 2017
    
    \bibitem[DP11]{DP11}
         Ding, J., and Peres, Y.
        \newblock Mixing time for the Ising model: A uniform lower bound for all graphs.
        \newblock {\em Ann. de l'IHP Prob. et stat.}, 47(4):1020--1028, 2011

    \bibitem[LP10]{LP10}
        Levin, D. A., Peres, Y., and Elizabeth L. W.
        \newblock \textit{Markov Chains and Mixing Times}(2nd Ed.). 
        \newblock American Mathematical Society, RI, 2017.
        \newblock
        

    \bibitem[SG73]{SG73}
        Simon, B., and Griffiths, R.B.
        \newblock The $(\varphi^4)_2$ field theory as a classical Ising model.
        \newblock {\em Comm. in Math. Phys.}, 33:145--164, 1973

    \bibitem[Ellis66]{Ellis66}
        Ellis R.S.
        \newblock \textit{Entropy, large deviations, and statistical mechanics.}
        \newblock Taylor \& Francis, 1431(821), 2006

    \bibitem[ENR80]{ENR80}
        Ellis R.S., Newman, C.M., and Rosen, J.S.
        \newblock Limit theorems for sums of dependent random variables occuring in statistical mechanics. II. Conditioning, multiple phases, and metastability.
        \newblock {\em Z. Wahrsch. f{\"u}r Verw. Gebiete}, 51(2):153--169, 1980
            
    \bibitem[DS81]{DS81}
        Diaconis, P., and Shahshahani, M.
        \newblock Generating a random permutation with random transpositions.
        \newblock {\em Z. Wahrsch. f{\"u}r Verw. Gebiete}, 57(2):159--179, 1981

    \bibitem[MPS04]{MPS04}
        Mossel, E., Peres, Y., and Sinclair, A.
        \newblock Shuffling by semi-random transpositions.
        \newblock {\em Proceedings 45th Annual IEEE Symposium on FOCS}, 572--581, 2004
        
    \bibitem[SZ07]{SZ07}
        Saloff-Coste, L., and Zuniga, J.
        \newblock Convergence of some time inhomogeneous Markov chains via spectral techniques.
        \newblock {\em Stoc. Proc. their appl.}, 117(8):961--979, 2007 
    
    \bibitem[DR04]{DR04}
        Diaconis, P., and Ram, A.
        \newblock Analysis of systematic scan Metropolis algorithms using Iwahori-Hecke algebra techniques.
        \newblock {\em Michigan Math. J}, 48(1):157--190, 2000

    \bibitem[bcsv17]{bcsv17}
        Blanca, A., Caputo, P., Sinclair, A., and Vigoda, E.
        \newblock Spatial Mixing and Non-local Markov chains.
        \newblock {\em Random Structures \& Algorithms}, 55(3):584--614, 2019
    
    \bibitem[NN18]{NN18}
        Nam, D., and Nestoridi, E.
        \newblock Cutoff for the cyclic adjacent transposition shuffle.
        \newblock {\em Ann. of App. Prob}, 29(6):3861--3892, 2019

    \bibitem[Nes19]{Nes19}
        Nestoridi, E.
        \newblock Optimal strong stationary times for random walks on the chambers of a hyperplane arrangement.
        \newblock {\em Prob. Theory and Related Fields}, 174(3-4):929--943, 2019
    
    \bibitem[PW13]{PW13}
        Peres, Y., and Winkler, P.
        \newblock Can extra updates delay mixing?.
        \newblock{\em Comm. in Math. Phys.}, 323:1007--1016, 2013
    
    \bibitem[Lac16]{Lac16}
        Lacoin, H.
        \newblock Mixing time and cutoff for the adjacent transposition shuffle and the simple exclusion.
        \newblock{\em Ann. of Prob.}, 44(2):1426--1487, 2016
    
    \bibitem[DF80]{DF80}
        Diaconis, P., Freedman, D.
        \newblock Finite exchangeable sequences.
        \newblock{\em Ann. of Prob.}, 8(4):745--764, 1980
        
    \bibitem[GWL66]{GWL66}
        Griffiths, R.B., Weng, C.-Y., and Langer, J.S.
        \newblock Relaxation times for metastable states in the mean-field model of a ferromagnet.
        \newblock {\em Phys. Rev.}, 149(1):301, 1966

    \bibitem[DGJ08]{DGJ08}
        Dyer, M., Goldberg, L., and Jerrum, M.
        \newblock Dobrushin conditions and systematic scan.
        \newblock {\em Comb. Prob. and computing}, 17(6):761–779, 2008
    
    \end{thebibliography}

\end{document}